\title{Flows on measurable spaces}
\author{L\'aszl\'o Lov\'asz\footnote{Alfr\'ed R\'enyi Institute of Mathematics. Research supported by ERC Synergy Grant
No.~810115.}}
\newtheorem{theorem}{Theorem}[section]
\newtheorem{prop}[theorem]{Proposition}
\newtheorem{lemma}[theorem]{Lemma}
\newtheorem{claim}{Claim}
\newtheorem{corollary}[theorem]{Corollary}
\newtheorem{suppl}[theorem]{Supplement}
\theoremstyle{definition}
\newtheorem{remark}[theorem]{Remark}
\newtheorem{example}[theorem]{Example}
\newtheorem{conjecture}{Conjecture}
\newenvironment{proof*}[1]{\medskip\noindent{\it Proof of #1.}}{\hfill$\square$\medskip}
\begin{document}

\addtolength{\baselineskip}{3pt}

\def\AA{\mathcal{A}}\def\BB{\mathcal{B}}\def\CC{\mathcal{C}}
\def\DD{\mathcal{D}}\def\EE{\mathcal{E}}\def\FF{\mathcal{F}}
\def\GG{\mathcal{G}}\def\HH{\mathcal{H}}\def\II{\mathcal{I}}
\def\JJ{\mathcal{J}}\def\KK{\mathcal{K}}\def\LL{\mathcal{L}}
\def\MM{\mathcal{M}}\def\NN{\mathcal{N}}\def\OO{\mathcal{O}}
\def\PP{\mathcal{P}}\def\QQ{\mathcal{Q}}\def\RR{\mathcal{R}}
\def\SS{\mathcal{S}}\def\TT{\mathcal{T}}\def\UU{\mathcal{U}}
\def\VV{\mathcal{V}}\def\WW{\mathcal{W}}\def\XX{\mathcal{X}}
\def\YY{\mathcal{Y}}\def\ZZ{\mathcal{Z}}

\def\Ab{\mathbf{A}}\def\Bb{\mathbf{B}}\def\Cb{\mathbf{C}}
\def\Db{\mathbf{D}}\def\Eb{\mathbf{E}}\def\Fb{\mathbf{F}}
\def\Gb{\mathbf{G}}\def\Hb{\mathbf{H}}\def\Ib{\mathbf{I}}
\def\Jb{\mathbf{J}}\def\Kb{\mathbf{K}}\def\Lb{\mathbf{L}}
\def\Mb{\mathbf{M}}\def\Nb{\mathbf{N}}\def\Ob{\mathbf{O}}
\def\Pb{\mathbf{P}}\def\Qb{\mathbf{Q}}\def\Rb{\mathbf{R}}
\def\Sb{\mathbf{S}}\def\Tb{\mathbf{T}}\def\Ub{\mathbf{U}}
\def\Vb{\mathbf{V}}\def\Wb{\mathbf{W}}\def\Xb{\mathbf{X}}
\def\Yb{\mathbf{Y}}\def\Zb{\mathbf{Z}}

\def\ab{\mathbf{a}}\def\bb{\mathbf{b}}\def\cb{\mathbf{c}}
\def\db{\mathbf{d}}\def\eb{\mathbf{e}}\def\fb{\mathbf{f}}
\def\gb{\mathbf{g}}\def\hb{\mathbf{h}}\def\ib{\mathbf{i}}
\def\jb{\mathbf{j}}\def\kb{\mathbf{k}}\def\lb{\mathbf{l}}
\def\mb{\mathbf{m}}\def\nb{\mathbf{n}}\def\ob{\mathbf{o}}
\def\pb{\mathbf{p}}\def\qb{\mathbf{q}}\def\rb{\mathbf{r}}
\def\sb{\mathbf{s}}\def\tb{\mathbf{t}}\def\ub{\mathbf{u}}
\def\vb{\mathbf{v}}\def\wb{\mathbf{w}}\def\xb{\mathbf{x}}
\def\yb{\mathbf{y}}\def\zb{\mathbf{z}}

\def\Abb{\mathbb{A}}\def\Bbb{\mathbb{B}}\def\Cbb{\mathbb{C}}
\def\Dbb{\mathbb{D}}\def\Ebb{\mathbb{E}}\def\Fbb{\mathbb{F}}
\def\Gbb{\mathbb{G}}\def\Hbb{\mathbb{H}}\def\Ibb{\mathbb{I}}
\def\Jbb{\mathbb{J}}\def\Kbb{\mathbb{K}}\def\Lbb{\mathbb{L}}
\def\Mbb{\mathbb{M}}\def\Nbb{\mathbb{N}}\def\Obb{\mathbb{O}}
\def\Pbb{\mathbb{P}}\def\Qbb{\mathbb{Q}}\def\Rbb{\mathbb{R}}
\def\Sbb{\mathbb{S}}\def\Tbb{\mathbb{T}}\def\Ubb{\mathbb{U}}
\def\Vbb{\mathbb{V}}\def\Wbb{\mathbb{W}}\def\Xbb{\mathbb{X}}
\def\Ybb{\mathbb{Y}}\def\Zbb{\mathbb{Z}}

\def\Af{\mathfrak{A}}\def\Bf{\mathfrak{B}}\def\Cf{\mathfrak{C}}
\def\Df{\mathfrak{D}}\def\Ef{\mathfrak{E}}\def\Ff{\mathfrak{F}}
\def\Gf{\mathfrak{G}}\def\Hf{\mathfrak{H}}\def\If{\mathfrak{I}}
\def\Jf{\mathfrak{J}}\def\Kf{\mathfrak{K}}\def\Lf{\mathfrak{L}}
\def\Mf{\mathfrak{M}}\def\Nf{\mathfrak{N}}\def\Of{\mathfrak{O}}
\def\Pf{\mathfrak{P}}\def\Qf{\mathfrak{Q}}\def\Rf{\mathfrak{R}}
\def\Sf{\mathfrak{S}}\def\Tf{\mathfrak{T}}\def\Uf{\mathfrak{U}}
\def\Vf{\mathfrak{V}}\def\Wf{\mathfrak{W}}\def\Xf{\mathfrak{X}}
\def\Yf{\mathfrak{Y}}\def\Zf{\mathfrak{Z}}

\def\afr{\mathfrak{a}}\def\bfr{\mathfrak{b}}\def\cfr{\mathfrak{c}}
\def\dfr{\mathfrak{d}}\def\efr{\mathfrak{e}}\def\ffr{\mathfrak{f}}
\def\gfr{\mathfrak{g}}\def\hfr{\mathfrak{h}}\def\ifr{\mathfrak{i}}
\def\jfr{\mathfrak{j}}\def\kfr{\mathfrak{k}}\def\lfr{\mathfrak{l}}
\def\mfr{\mathfrak{m}}\def\nfr{\mathfrak{n}}\def\ofr{\mathfrak{o}}
\def\pfr{\mathfrak{p}}\def\qfr{\mathfrak{q}}\def\rfr{\mathfrak{r}}
\def\sfr{\mathfrak{s}}\def\tfr{\mathfrak{t}}\def\ufr{\mathfrak{u}}
\def\vfr{\mathfrak{v}}\def\Wfr{\mathfrak{w}}\def\xfr{\mathfrak{x}}
\def\yfr{\mathfrak{y}}\def\zfr{\mathfrak{z}}

\def\alphab{{\boldsymbol\alpha}}\def\betab{{\boldsymbol\beta}}
\def\gammab{{\boldsymbol\gamma}}\def\deltab{{\boldsymbol\delta}}
\def\etab{{\boldsymbol\eta}}\def\zetab{{\boldsymbol\zeta}}
\def\kappab{{\boldsymbol\kappa}}
\def\lambdab{{\boldsymbol\lambda}}\def\mub{{\boldsymbol\mu}}
\def\nub{{\boldsymbol\nu}}\def\pib{{\boldsymbol\pi}}
\def\rhob{{\boldsymbol\rho}}\def\sigmab{{\boldsymbol\sigma}}
\def\taub{{\boldsymbol\tau}}\def\epsb{{\boldsymbol\varepsilon}}
\def\phib{{\boldsymbol\varphi}}\def\psib{{\boldsymbol\psi}}
\def\xib{{\boldsymbol\xi}}\def\omegab{{\boldsymbol\omega}}

\def\RSA{{\it Random Struc.\ Alg.} }
\def\CCA{{\it Combinatorica} }
\def\JCTB{{\it J.~Combin.\ Theory B} }
\def\JCTA{{\it J.~Combin.\ Theory A} }
\def\CPC{{\it Combin.\ Prob.\ Comput.} }
\def\EJC{{\it Europ.\ J.~Combin.} }
\def\ELJC{{\it Electr.\ J.~Combin.} }
\def\GC{{\it Graphs and Combin.} }
\def\JGT{{\it J.~Graph Theory} }
\def\ADV{{\it Advances in Math.} }
\def\ADVA{{\it Advances in Applied Math.} }
\def\AMH{{\it Acta Math. Hung.} }
\def\GAFA{{\it Geom.\ Func.\ Anal.} }
\def\STOC#1 {{\it Proc.\ #1$^\text{th}$ ACM Symp.\
on Theory of Comput.} }
\def\FOCS#1 {{\it Proc.\ #1$^\text{th}$ Ann.\ IEEE
Symp.\ on Found.\ Comp.\ Science} }
\def\SODA#1 {{\it Proc.\ #1$^\text{th}$ Ann.\
ACM-SIAM Symp.\ on Discrete Algorithms} }
\def\DCG{{\it Discr.\ Comput.\ Geom.} }
\def\DM{{\it Discr.\ Math.} }
\def\DAM{{\it Discr.\ Applied Math.} }
\def\SJC{{\it SIAM J.~Comput.} }
\def\SDM{{\it SIAM J.~Discr.\ Math.} }
\def\TIT{{\it IEEE Trans.\ Inform.\ Theory} }
\def\LAA{{\it Linear Algebra Appl.} }

\def\sqprod{\mathbin{\square}}

\def\ybb{\mathbbm{y}}
\def\one{{\mathbbm1}}
\def\two{{\mathbbm2}}
\def\R{\Rbb}\def\Q{\Qbb}\def\Z{\Zbb}\def\N{\Nbb}\def\C{\Cbb}
\def\wh{\widehat}
\def\wt{\widetilde}

\def\eps{\varepsilon}
\def\ca{\Mf}
\def\sgn{{\rm sgn}}
\def\dd{{\sf d}}
\def\Rv{\overleftarrow}
\def\Pr{{\sf P}}
\def\E{{\sf E}}
\def\T{^{\sf T}}
\def\proofend{\hfill$\square$}
\def\id{\hbox{\rm id}}
\def\conv{\hbox{\rm conv}}
\def\lin{\hbox{\rm lin}}
\def\conv{\hbox{\rm conv}}
\def\Dim{\hbox{\rm Dim}}
\def\const{\hbox{\rm const}}
\def\vol{\text{\rm vol}}
\def\val{\text{\rm val}}
\def\diam{\text{\rm diam}}
\def\corank{\hbox{\rm cork}}
\def\cork{\hbox{\rm cork}}
\def\cro{\text{\rm cr}}
\def\supp{\text{\rm supp}}
\def\grad{\text{\rm grad}}
\def\rk{\hbox{\rm rk}}
\def\srk{\hbox{\rm srk}}
\def\diag{{\rm diag}}
\def\pw{{\sf w}_\text{\rm prod}}
\def\tw{{\sf w}_\text{\rm tree}}
\def\aw{{\sf w}_\text{\rm alg}}
\def\bw{{\sf bw}}
\def\ld{{\sf d}_{\rm loc}}
\def\hd{{\sf d}_{\rm har}}
\def\tv{\text{\rm tv}}
\def\Tr{\text{\rm Tr}}
\def\tr{\text{\rm tr}}
\def\Prob{\hbox{\rm Pr}}
\def\bl{\text{{\rm bl}}}
\def\Inf{\text{\sf Inf}}
\def\Str{\text{\sf Str}}
\def\Rig{\text{\sf Rig}}
\def\Mat{\text{\sf Mat}}
\def\comm{{\sf comm}}
\def\maxcut{{\sf maxcut}}
\def\disc{\text{\sf disc}}
\def\cond{\Phi}
\def\dist{d_{\rm qu}}
\def\dhaus{d_{\rm haus}}
\def\dlp{d_{\rm LP}}
\def\dact{d_{\rm act}}
\def\Rng{\text{\rm Rng}}
\def\Ker{\text{\rm Ker}}

\long\def\ignore#1{}
\def\gdim{{\rm gdim}}
\def\gap{\text{\rm gap}}

\maketitle

\tableofcontents

\newpage

\begin{abstract}
The theory of graph limits is only understood to a somewhat satisfactory degree
in the cases of dense graphs and of bounded degree graphs. There is, however, a
lot of interest in the intermediate cases. It appears that one of the most
important constituents of graph limits in the general case will be Markov
spaces (Markov chains on measurable spaces with a stationary distribution).

This motivates our goal to extend some important theorems from finite graphs to
Markov spaces or, more generally, to measurable spaces. In this paper, we show
that much of flow theory, one of the most important areas in graph theory, can
be extended to measurable spaces. Surprisingly, even the Markov space structure
is not fully needed to get these results: all we need a standard Borel space
with a measure on its square (generalizing the finite node set and the counting
measure on the edge set). Our results may be considered as extensions of flow
theory for directed graphs to the measurable case.
\end{abstract}

\section{Introduction}

The theory graph limits is only understood to a somewhat satisfactory degree in
the case of dense graphs, where the limit objects are {\it graphons}, and (on
the opposite end of the scale) in the case of bounded degree graphs, where the
limit objects are {\it graphings}. There is, however, a lot of work being done
on the intermediate cases. It appears that the most important constituents of
graph limits in the general case will be Markov spaces (Markov chains on
measurable spaces with a stationary distribution). Markov spaces can be
described by a (boolean) sigma-algebra, endowed with a measure on its square,
such that its two marginals are equal.

A finite directed graph $G=(V,E)$ can be thought of as a sigma-algebra $2^V$,
endowed with a measure of $V\times V$, the counting measure of the set of
edges. This motivates our goal to extend some important theorems from finite
graphs to measures on squares of sigma-algebras. In this paper we show that
much of flow theory, one of the most important areas in graph theory, can be
extended to such spaces.

In the finite case, a flow is a function on the edges; we often sum its values
on subsets of edges (e.g.~cuts), which means we are also using the
corresponding measure on subsets. In the case of an infinite point set $J$
(endowed with a sigma-algebra $\AA$), these two notions diverge: we can try to
generalize the notion of a flow either as a function on ordered pairs of
points, or as a measure on the subsets of $J\times J$ measurable with respect
to the sigma-algebra $\AA\times\AA$. While the first notion is perhaps more
natural, flows as measures are easier to define, and we explore this
possibility in this paper. Note that even the definition of the flow condition
``inflow$=$outflow'' in the infinite case needs some additional hypothesis or
stucture: Laczkovich \cite{Lacz} uses an underlying measure on the nodes, while
and Marks and Unger \cite{MU} restrict their attention to finite-degree graphs.
Of course, one can get back and force between measures and functions under
under appropriate circumstances (by integration and Radon-Nikodym
differentiation, respectively), but the measure-theoretic formulation seems to
involve the least number of extra conditions.

In particular, we generalize the Hoffman Circulation Theorem to measurable
spaces. This connects us with the theory of Markov spaces, which can be
described as measurable spaces endowed with a nonnegative normalized
circulation, called the {\it ergodic circulation}. Our main concern will be the
{\it existence} of circulations; in this sense, these studies can be thought of
as preliminaries for the study of Markov spaces or Markov chains, which are
concerned with measurable spaces with a {\it given} ergodic circulation.

Flows between two points, and more generally, between two measures can then be
handled using the results about circulations (by the same reductions as in the
finite case). In particular, we prove an extension of the Max-Flow-Min-Cut
Theorem, and a measure-theoretic generalization of the Multicommodity Flow
Theorem by Iri and Matula--Shahroki.

A few caveats: Graph limit theory has served as the motivation of these
studies, but in this paper we don't study how, for a graph sequence that is
convergent in some well-defined sense, parameters and properties of flows
converge to those of flows on the measurable spaces serving as their limit
objects.

Also, Markov spaces only capture the edge measure of graphons and graphings; to
get a proper generalization, one needs to add a further measure on the nodes,
to get a {\it double measure space}. This node measure is not needed for our
development of measure-theoretic flow theory, but it is clearly needed for
extending other graph-theoretic notions, like expansion or matchings (see e.g.\
\cite{HP}).

Third, our proofs for the existence various (generalized) flows in this paper
are not constructive, because of the use of the Hahn--Banach Theorem. Of
course, in these infinite structures no ``algorithmic'' proof can be given, but
replacing our proofs by iterative constructions modeled on algorithmic proofs
in the finite setting would be desirable.

\section{Preliminaries}

\subsection{Flow theory on finite graphs}

As a motivation of the results in this paper, let us recall some basic results
on finite graphs in this area.

Let $G=(V,E)$ be a finite directed graph and $g:~E\to\R$. The {\it flow
condition} at node $i$ is that the ``inflow'' equals the ``outflow''; formally,
\begin{equation}\label{EQ:FLOW-COND}
\sum_{j:\,ij\in E} g(ij) = \sum_{j:\,ji\in E} g(ji)
\end{equation}
A {\it circulation} on $G$ is a function $f:~E\to\R$ satisfying the flow
condition at every node $i$. Circulations could also be defined by the
condition
\[
\sum_{i\in A, j\in A^c} g(ij) = \sum_{i\in A^c, j\in A} g(ij)
\]
for every $A\subseteq V$ (here $A^c=V\setminus A$ denotes the complement of
$A$). A basic result about the existence of circulations satisfying prescribed
bounds is the following \cite{Hoff}.

\medskip

\noindent{\bf Hoffman's Circulation Theorem.} {\it Let $a,b:~E\to\R$ be two
functions on the edges of a directed graph $G=(V,E)$. Then there is a
circulation $g:~E\to\R$ such that $a(ij)\le g(ij)\le b(ij)$ for every edge $ij$
if and only if $a\le b$ and
\[
\sum_{i\in A, j\in A^c} a(ij) \le \sum_{i\in A^c, j\in A} b(ij)
\]
for every $A\subseteq V$.}

\medskip

The most important consequence of the Hoffman Circulation Theorem is the
Max-Flow-Min-Cut Theorem of Ford and Fulkerson \cite{FF}. Let $s,t\in V$ and
let $c:~E\to\R_+$ be an assignment of nonnegative ``capacities'' to the edges.
An {\it $s$-$t$ cut} is a set of edges from $A$ to $A^c$, where $s\in A$ and
$t\notin A$. The {\it capacity} of this cut is the sum $\sum_{i\in A,\,j\in
A^c} c(ij)$.

An {\it $s$-$t$ flow} is function $f:~E\to\R$ satisfying the flow condition
\eqref{EQ:FLOW-COND} at every node $i~\not= s,t$. The {\it value} of the flow
is
\[
\val(f)=\sum_{j:\,sj\in E} f(sj) - \sum_{j:\,js\in E} f(js) =\sum_{j:\,jt\in E} f(jt) - \sum_{j:\,tj\in E} f(tj).
\]
A flow is {\it feasible}, if $0\le f\le c$.

\medskip

\noindent{\bf Max-Flow-Min-Cut Theorem.} {\it The maximum value of a feasible
$s$-$t$ flow is the minimum capacity of an $s$-$t$ cut. }

\medskip

Instead of specifying just two nodes, we can specify a {\it supply} and a {\it
demand} at each node, and require that the difference between the outflow and
the inflow be the difference between the supply and the demand.

Suppose that there is a circulation $g$ satisfying the given conditions
$a(e)\le g(e)\le b(e)$ for every (directed) edge $e$ (for short, a feasible
circulation). Also suppose that we are given a ``cost'' function $c:~E\to\R_+$.
What is the minimum of the ``total cost'' $\sum_e c(e)g(e)$ for a feasible
circulation? This can be answered by solving a linear program, where the
Duality Theorem applies; the condition is somewhat awkward, we'll state it
later for the general (measure) case.

Let $G=(V,E)$ be a (finite) directed graph. A {\it multicommodity flow} is a
family of flows $(f_{st}:~s,t\in V)$, where $f_{st}$ is a (nonnegative) $s$-$t$
flow. Suppose that we are given capacities $c(i,j)\ge 0$ for the edges and
demands $\sigma(s,t)\ge 0$ for all pairs of nodes. Then we say that the
multicommodity flow is {\it feasible}, if $f_{st}$ has value $\sigma(s,t)$, and
\[
\sum_{s,t} f_{st}(ij)\le c(i,j)
\]
for every edge $ij$. (We may assume, if convenient, that the graph is a
bidirected complete graph, since missing edges can be added with capacity $0$.)

The question is whether a feasible multicommodity flow exists. This is not
really hard, since the conditions can be written as a system of linear
inequalities, treating the values $f_{st}(i,j)$ as variables, and we can apply
Linear Programming. However, working out the dual we get conditions that are
not too transparent. But for undirected graphs there is a very nice form of the
condition due to Iri \cite{Iri} and to Shahroki and Matula \cite{ShMa}.

Let $G=(V,E)$ be an undirected graph, where we consider each undirected edge as
a pair of oppositely directed edges. Let us assume that the demand function
$\sigma(i,j)$ and the capacity function are symmetric:
$\sigma(i,j)=\sigma(j,i)$ and $c(i,j)=c(j,i)$. Consider a pseudometric $D$ on
$V$ (a function $D:~V\to V$ that is nonnegative, symmetric and satisfies the
triangle inequality, but $D(x,y)$ may be zero for $x\not=y$). If a feasible
multicommodity flow exists, then
\begin{equation}\label{EQ:ISM-COND}
\sum_{s,t\in V} \sigma(s,t) D(s,t) \le \sum_{ij\in E} c(i,j) D(i,j)
\end{equation}
(Just write each $s$-$t$ flow as a nonnegative linear combination of paths and
cycles, and use that the sum of edge lengths along each path is at least
$D(s,t)$.) We call this inequality the {\it volume condition}. When required
for every pseudometric, it is also sufficient:

\medskip

\noindent{\bf Multicommodity Flow Theorem.} {\it There exist a feasible
multicommodity flow satisfying the demands if and only if the volume condition
\eqref{EQ:ISM-COND} is satisfied for every pseudometric $D$ in $V$.}

\subsection{Graph limits}

\subsubsection{Graphons}

Let $(J,\AA)$ be a standard Borel space, and let $W:~J\times J\to[0,1]$ be a
measurable function. Let us endow $(J,\AA)$ with a {\it node measure}, a
probability measure $\lambda$. If $W$ is symmetric (i.e. $W(x,y)=W(y,x)$), then
the quadruple $(J,\AA,\lambda,W)$ is called a {\it graphon}. Dropping the
assumption that $W$ is symmetric, we get a {\it digraphon}.

The {\it edge measure} of a graphon or digraphon is the integral measure of
$W$,
\[
\eta(S) = \int\limits_S W\,d(\lambda\times\lambda).
\]
The node measure and edge measure of a graphon determine the graphon, up to a
set of $(\lambda\times \lambda)$-measure zero. Indeed, $\eta$ is absolutely
continuous with respect to $\lambda\times\lambda$, and $W=d\eta/d(\lambda\times
\lambda)$ almost everywhere.

Graphons can represent limit objects of sequences of dense graphs that are
convergent in the local sense \cite{BCLSV1,LSz1}. For this representation, we
may limit the underlying sigma-algebra to standard Borel spaces.

\subsubsection{Graphings}

Let $(J,\AA)$ be a standard Borel space. A {\it Borel graph} is a simple
(infinite) graph on node set $J$, whose edge set $E$ belongs to $\AA\times\AA$.
By ``graph'' we mean a simple undirected graph, so we assume that $E\subseteq
J\times J$ avoids the diagonal of $J\times J$ and is invariant under
interchanging the coordinates. A {\it graphing} is a Borel graph, with all
degrees bounded by a finite constant, endowed with a probability measure
$\lambda$ on $(J,\AA)$, satisfying the following ``measure-preservation''
condition for any two subsets $A,B\in\AA$:
\begin{equation}\label{EQ:UNIMOD}
\int\limits_A \deg_B(x)\,d\lambda(x) =\int\limits_B \deg_A(x)\,d\lambda(x).
\end{equation}
Here $\deg_B(x)$ denotes the number of edges connecting $x\in J$ to points of
$B$. (It can be shown that this is a bounded Borel function of $x$.) We call
$\lambda$ the {\it node measure} of the graphing.

We can define {\it Borel digraphs} (directed graphs) in the natural way, by
allowing $E$ to be any set in $\AA\times\AA$. To define a {\it digraphing}, we
assume that both the indegrees and outdegrees are finite and bounded. In this
case we have to define two functions: $\deg^+_B(x)$ denotes the number of edges
from $x$ to $B$, and $\deg^-_B(x)$ denotes the number of edges from $B$ to $x$.
The ``measure-preservation'' condition says that
\begin{equation}\label{EQ:UNIMOD-D}
\int\limits_A \deg^+_B(x)\,d\lambda(x) =\int\limits_B \deg^-_A(x)\,d\lambda(x)
\end{equation}
for $A,B\in\AA$. Such a digraphing defines a measure on Borel subsets of $J^2$,
the {\it edge measure} of the digraphing: on rectangles we define
\[
\eta(A\times B) = \int\limits_A \deg^+_B(x)\,d\lambda(x),
\]
which extends to Borel subsets in the standard way. This measure is
concentrated on the set of $E$ of edges. In the case of graphings, the edge
measure is symmetric in the sense that interchanging the two coordinates does
not change it. The node measure and the edge measure determine the (di)graphing
up to a set of edges of $\eta$-measure zero.

Graphings can represent limit objects of sequences of bounded-degree graphs
that are convergent in the local (Benjamini--Schramm) sense \cite{BSch,Elek1},
but also in a stronger, local-global sense \cite{HLSz}.

\subsubsection{Double measure spaces}

For both graphons and graphings, all essential information is contained in the
quadruple $(J,\AA,\lambda,\eta)$, where the {\it node measure} $\lambda$ is a
probability measure on $(J,\AA)$ and the {\it edge measure} $\eta$ is a
symmetric measure on $(J\times J,\AA\times\AA)$. Such a quadruple will be
called a {\it double measure space}. Graphons are those double measure spaces
where $\eta$ is dominated by $\lambda\times\lambda$; the function $W$
describing the graphon is the Radon-Nikodym derivative
$d\eta/d(\lambda\times\lambda)$. Graphings, on the other hand, are those double
measure spaces whose edge measure is extremely singular with respect to
$\lambda\times\lambda$.

It turns out that double measure spaces play a role in other recent work in
graph limit theory, as limit objects for graph sequences that are neither dense
nor bounded-degree, but convergent in some well-defined sense: shape
convergence \cite{KLSz} or action convergence \cite{BackSz}. We don't describe
these limit theories here, but as an example for which a very reasonable limit
can be defined in terms of double measure spaces we mention the sequence of
hypercubes.

We can scale the edge measure of a double measure space to get a probability
measure; if we drop the node measure (or restrict our interest to the case when
$\lambda$ is the marginal of $\eta$, to get to our main object of study, Markov
spaces. Except for the scaling factor, this generalizes regular graphs. To
construct limits of non-regular graphs we need the additional information
contained in the node measure; the marginal of $\eta$ corresponds to the degree
sequence.

\subsubsection{Markov spaces}\label{SEC:MARKOV}

A {\it Markov space} consists of a sigma-algebra $\AA$, together with a
probability measure $\eta$ on $\AA^2$ whose marginals are equal. We call $\eta$
the {\it ergodic circulation}, and its marginals $\pi=\eta^1=\eta^2$, the {\it
stationary distribution} of the Markov space $(\AA,\eta)$.

As the terminology above suggests, Markov spaces are intimately related to
Markov chains. To define a Markov chain, we need a sigma-algebra $\AA$ and a
probability measure $P_u$ on $\AA$ for every $u\in J$, called the {\it
transition distribution} from $u$. One assumes that for every $A\in \AA$, the
value $P_u(A)$ is a measurable function of $u\in J$. This structure is
sometimes called a {\it Markov scheme}.

If we also have a {\it starting distribution} on $(J,\AA)$, then we can
generate a {\it Markov chain}, i.e. a sequence of random points $(\wb^0, \wb^1,
\wb^2,\ldots)$ of $J$ such that $\wb^0$ is chosen from the starting
distribution, and $\wb^{i+1}$ is chosen from distribution $P_{\wb^i}$
(independently of the previous elements $\wb^0,\ldots,\wb^{i-1}$ of the Markov
chain). Sometimes we call this sequence a {\it random walk}.

A probability measure $\pi$ on $(J,\AA)$ is a {\it stationary distribution} for
the Markov scheme if choosing $\wb^0$ from this distribution, the next point
$\wb^1$ of the walk will have the same distribution. While finite Markov
schemes always have a stationary distribution, this is not true for infinite
underlying sigma-algebras. Furthermore, a Markov scheme may have several
stationary distributions. (In the finite case, this happens only if the
underlying directed graph is not strongly connected.)

A Markov scheme $(J,\{P_u:~u\in J\})$ with a fixed stationary distribution
$\pi$ defines a Markov space, whose ergodic circulation is the joint
distribution measure $\eta$ of $(\wb^0,\wb^1)$, where $\wb^0$ is a random point
from the stationary distribution. Both marginals of this ergodic circulation
equal to the stationary distribution $\pi$.

The ergodic circulation $\eta$ determines the Markov scheme (except for a set
of measure zero in the stationary measure). Using the Disintegration Theorem
(Proposition \ref{PROP:DISINT} below), one can show that every Markov space is
obtained by this construction from a Markov scheme with a stationary
distribution.

It is clear that if $(\AA,\eta)$ is a Markov space, then $(\AA,\eta^*)$ is a
Markov space with the same stationary distribution. The corresponding Markov
chain is called the {\it reverse chain}. A Markov space is {\it reversible}, if
$\eta=\eta^*$. A Markov space $(\AA,\eta)$ is {\it indecomposable}, if
$\eta(A\times A^c)>0$ for every set $A\in\AA$ with $0<\pi(A)<1$.

Flow problems on graphons and graphings can be formulated as flow problems on
double measure spaces; we'll see that many of them can be formulated as flow
problems on Markov spaces, without reference to the node measure. The solutions
we obtain yield solutions in the settings of graphings and graphons, via
Radon--Nikodym derivatives. However, as mentioned in the introduction, these
are just ``pure existence proofs'' (cf.\ also Remark \ref{REM:MEAS2FNC}).

\section{Auxiliaries}

\subsection{Measures}

Let $(J,\AA)$ be a sigma-algebra. Unless specifically emphasized otherwise, we
assume that $(J,\AA)$ is a standard Borel space of continuum cardinality; in
particular, $\AA$ is separating any two points, and it is countably generated.
Since the sigma-algebra $\AA$ determines its underlying set, we can talk about
the standard Borel space as a sigma-algebra (where, in the case of the
sigma-algebra denoted by $\AA$, the underlying set will be denoted by $J$). We
denote by $\ca(\AA)$ the linear space of finite signed (countably additive)
measures on $\AA$, and by $\ca_+(\AA)$, the set of nonnegative measures in
$\ca(\AA)$. We denote by $\delta_s$ the Dirac measure, the probability
distribution concentrated on $s\in J$.

If $\mu\in\ca(\AA)$ and $f:~J\to\R$ is a $\mu$-integrable function, then we
define a signed measure $f\cdot\mu\in\ca(\AA)$ and a number $\mu(f)$ by
\[
(f\cdot\mu)(A)=\int\limits_A f\,d\mu\quad(A\in\AA),\qquad \mu(f)=(f\cdot\mu)(J) = \int\limits_J f\,d\mu.
\]

We endow the linear space $\ca(\AA)$ with the total variation norm
\begin{equation}\label{EQ:TV-DEF}
\|\alpha\| = \sup_{A\in\AA}\alpha(A)-\inf_{B\in\AA}\alpha(B).
\end{equation}
We note that the supremum and the infimum are attained, when $J=A\cup B$ is a
Hahn decomposition of $\alpha$. With this norm, $\ca(\AA)$ becomes a Banach
space. This norm defines a metric on $\ca(\AA)$, the {\it total variation
distance}
\[
d_\tv(\alpha,\beta) = \|\alpha-\beta\|.
\]
Warning: if $\alpha$ and $\beta$ are probability measures, then
$\sup_{A\in\AA}(\alpha(A)-\beta(A))=-\inf_{A\in\AA}(\alpha(A)-\beta(A))$, and
so $d_\tv(\alpha,\beta)= 2 \sup_A (\alpha(A)-\beta(A))$. In probability theory,
the total variation distance is often defined as $\sup_A (\alpha(A)-\beta(A))$,
a factor of $2$ smaller.

For $\mu\in\ca(\AA)$ and $A\in \AA$, we define the restriction measure
$\mu_A\in\ca(\AA)$ by $\mu_A(X)=\mu(A\cap X)$. We denote the Jordan
decomposition of a signed measure $\alpha\in\ca(\AA)$ by
$\alpha=\alpha_+-\alpha_-$, and its total variation measure by
$|\alpha|=\alpha_++\alpha_-$. So $\|\alpha\|= \alpha_+(J)+\alpha_-(J) =
|\alpha|(J)$. For two measures $\alpha,\beta$ on $\AA$, we consider the Jordan
decomposition of their difference
$\alpha-\beta=(\alpha-\beta)_+-(\alpha-\beta)_-=(\alpha-\beta)_+-(\beta-\alpha)_+$,
and define the measures
\[
\alpha\setminus\beta= (\alpha-\beta)_+,\qquad
\alpha\land\beta = \alpha-(\alpha-\beta)_+ = \beta-(\beta-\alpha)_+.
\]
The measure $\alpha\land\beta$ is the largest nonnegative measure $\gamma$
dominated by both $\alpha$ and $\beta$.

If $\AA$ is a sigma-algebra, we denote by $\AA^2=\AA\times\AA$ the product
sigma-algebra of $\AA$ with itself; $\AA^3$ etc.~are defined analogously.
Sometimes it will be necessary to distinguish the factors (even though they are
identical), and we write $\AA^3=\AA_1\times\AA_2\times\AA_3 = \AA^{\{1,2,3\}}$
etc. For a measure $\mu\in\ca(\AA^n)$, and $T\subseteq \{1,\dots,n\}$, we let
$\mu^T$ denote its marginal on all coordinates in $T$. To simplify notation, we
write $\mu^{34}=\mu^{\{3,4\}}$, etc.

We need some further definitions for the sigma-algebra $\AA^2$ and for measures
on it. For $X\subseteq J\times J$, let $X^*=\{(x,y):~(y,x)\in X\}$. For a
function $f:~J\times J\to\R$, we define $f^*(x,y) = f(y,x)$. For a signed
measure $\mu$ on $\AA\times \AA$, we define $\mu^*(X)=\mu(X^*)$. A measure
$\mu$ on $J\times J$ that is {\it symmetric} if $\mu^*=\mu$.

We set $\mu^B(A)=\mu(A\times B)$. So $\mu^1=\mu^J$ and $\mu^2=(\mu^*)^J$ for
$\mu\in\ca(\AA^2)$. If $\mu^1=\lambda_1$ and $\mu^2=\lambda_2$, then we say
that $\mu$ is {\it coupling} the measures $\lambda_1$ and $\lambda_2$.

A {\it circulation} is a finite signed measure $\alpha\in\ca(\AA^2)$ with equal
marginals: $\alpha^1=\alpha^2$. Every symmetric measure is a circulation in a
trivial way. We'll return to circulations in the next section. We say that a
measure $\beta\in\ca_+(\AA^2)$ is {\it acyclic}, if there is no nonzero
circulation $\alpha$ such that $0\le\alpha\le\beta$. Every measure in
$\ca_+(\AA^2)$ can be written as the sum of a nonnegative acyclic measure and a
nonnegative circulation (this decomposition is not necessarily unique).

We need some well-known facts about measures.

\begin{lemma}\label{LEM:BOX-COMP}
Let $(J,\AA)$ be a standard Borel space, and $\psi\in\ca_+(\AA)$. Let
$\mu_1,\mu_2,\dots\in\ca(\AA)$ be signed measures with $|\mu_n|\le\psi$. Then
there is a subsequence $n_1<n_2<\dots$ of natural numbers and a signed measure
$\mu\in\ca(\AA)$ such that $|\mu|\le\psi$ and $\mu_{n_i}(A)\to\mu(A)$ for every
$A\in\AA$.
\end{lemma}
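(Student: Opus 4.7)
\medskip

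The plan is to reduce this to a weak-$*$ compactness statement in $L^\infty(\psi)$, exploiting the fact that for a standard Borel space and a finite measure $\psi$, the space $L^1(\psi)$ is separable.

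First I would use the Radon--Nikodym theorem. Since $|\mu_n|\le\psi$, each $\mu_n$ is absolutely continuous with respect to $\psi$, so $\mu_n=f_n\cdot\psi$ for some measurable $f_n\colon J\to[-1,1]$ (the bound $|f_n|\le 1$ $\psi$-a.e.\ comes from $|\mu_n|\le\psi$). Thus the problem becomes one about the sequence $(f_n)$ in the closed unit ball of $L^\infty(\psi)$.

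Second, I would invoke weak-$*$ sequential compactness. Because $(J,\AA)$ is standard Borel, $\AA$ is countably generated; together with finiteness of $\psi$, this makes $L^1(\psi)$ separable. Hence the closed unit ball of $L^\infty(\psi)=L^1(\psi)^*$ is weak-$*$ sequentially compact (Banach--Alaoglu together with separability of the predual). Extract a subsequence $f_{n_i}\to f$ weak-$*$, with $f\in L^\infty(\psi)$ and $|f|\le 1$ $\psi$-a.e.

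Third, set $\mu=f\cdot\psi$. For any $A\in\AA$, the indicator $\mathbbm{1}_A\in L^1(\psi)$, so weak-$*$ convergence yields
\[
\mu_{n_i}(A)=\int_A f_{n_i}\,d\psi=\int \mathbbm{1}_A f_{n_i}\,d\psi\longrightarrow \int \mathbbm{1}_A f\,d\psi=\mu(A),
\]
and $|\mu|=|f|\cdot\psi\le\psi$, which gives the desired conclusion.

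If one prefers to avoid quoting weak-$*$ compactness, an equivalent hands-on route is as follows: pick a countable algebra $\AA_0$ generating $\AA$, and by a standard diagonal argument on the bounded sequences $\bigl(\mu_n(A)\bigr)_n$ for $A\in\AA_0$, extract a subsequence $\mu_{n_i}$ such that $\mu_{n_i}(A)$ converges for every $A\in\AA_0$. The uniform bound $|\mu_{n_i}|\le\psi$ provides uniform absolute continuity with respect to $\psi$, so the Vitali--Hahn--Saks theorem lets one extend the limit from $\AA_0$ to a countably additive signed measure $\mu$ on $\AA$ and upgrade pointwise convergence on $\AA_0$ to pointwise convergence on all of $\AA$. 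The main (mild) obstacle in either approach is precisely this extension step from a generating algebra to the full $\sigma$-algebra; the $L^1$--$L^\infty$ duality route bypasses it cleanly by delegating the work to the separability of $L^1(\psi)$.
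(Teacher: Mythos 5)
Your main argument is correct and takes a genuinely different route from the paper. The paper works ``by hand'' in measure language: it passes to a countable generating algebra $\BB$, extracts a subsequence by diagonalization so that $\mu_{n_i}(B)$ converges for each $B\in\BB$, uses the domination $|\mu_n|\le\psi$ to verify that the pointwise limit on $\BB$ is a pre-measure (the decreasing-to-empty condition is controlled by $\psi$), extends by Carath\'eodory, and then upgrades convergence from $\BB$ to all of $\AA$ by $\psi$-approximation of an arbitrary $S\in\AA$ by sets in $\BB$. Your main route instead reduces to $L^1$--$L^\infty$ duality: the domination gives $\mu_n=f_n\cdot\psi$ with $\|f_n\|_\infty\le 1$, separability of $L^1(\psi)$ (which holds precisely because $\AA$ is countably generated and $\psi$ is finite) gives weak-$*$ sequential compactness of the unit ball of $L^\infty(\psi)$, and the limit $f$ immediately yields $\mu=f\cdot\psi$ with $|\mu|\le\psi$ and setwise convergence. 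This delegates both the existence of the limit measure and the upgrade from a generating algebra to all of $\AA$ to one clean functional-analytic fact; the paper's proof is more elementary and self-contained (no appeal to Banach--Alaoglu or to metrizability of the weak-$*$ topology) but spends several lines on exactly the pre-measure and approximation bookkeeping that duality absorbs for you.

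One minor caveat on your ``hands-on'' alternative, which is essentially the paper's proof: the appeal to Vitali--Hahn--Saks is not quite the right citation. VHS takes as \emph{input} setwise convergence on the whole $\sigma$-algebra and produces uniform absolute continuity as \emph{output}; here you already have (the strong form of) uniform absolute continuity from $|\mu_n|\le\psi$, and what you need is to extend convergence from the algebra $\AA_0$ to $\AA$. That extension is done directly by the approximation estimate $|\mu_n(S)-\mu_n(B)|\le\psi(S\triangle B)$, uniform in $n$, exactly as the paper does; no convergence theorem is needed. This does not affect your primary argument, which is complete as written.
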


It follows easily that, more generally, $\mu_{n_i}(f)\to\mu(f)$ for every
bounded measurable function $f:~J\to\R$.

\begin{proof}
We may assume that $\mu_n\ge0$ (just add $\psi$ to every measure). Let $\BB$ be
a countable set algebra generating $\AA$. The sequence
$(\mu_n(B):~n=1,2,\dots)$ is bounded for every $B\in\BB$, so choosing an
appropriate subsequence, we may assume that there is a function $\mu:~\BB\to\R$
such that $\mu_n(B)\to\mu(B)$ for all $B\in\BB$. Clearly $\mu_n$ is a
pre-measure on $\BB$. We claim that $\mu$ is a pre-measure on $\BB$. Finite
additivity of $\mu$ is trivial, and so is $0\le\mu(B)\le\psi(B)$ for $B\in\BB$.
If $B_1\supseteq B_2\supseteq\dots$ $(B_i\in\BB)$ and $\cap_k B_k=\emptyset$,
then $\mu(B_k)\le \psi(B_k)$, and since $\psi(B_k)\to 0$ as $k\to\infty$, we
have $\mu(B_k)\to0$ as well.

It follows that $\mu$ extends to a measure on $\AA$. Uniqueness of the
extension implies that $0\le\mu\le\psi$ on the whole sigma-algebra $\AA$. Let
$S\in\AA$; we claim that $\mu_n(S)\to\mu(S)$ ($n\to\infty$). For every
$\eps>0$, there is a set $B\in\BB$ such that $\psi(B\triangle A)\le\eps/3$.
This implies that $|\mu_n(S)-\mu_n(B)|\le \mu_n(S\triangle B)\le\psi(S\triangle
B)\le\eps/3$, and similarly $|\mu(S)-\mu(B)|\le \eps/3$. Thus
$|\mu_n(S)-\mu(S)|\le |\mu_n(B)-\mu(B)|+2\eps/3$. Since $\mu_n(B)\to\mu(B)$ by
the definition of $\mu$, we have $|\mu_n(S)-\mu(S)|\le\eps$ if $n$ is large
enough.
\end{proof}

The following fact follows by a very similar argument.

\begin{lemma}\label{LEM:COUPLE-CONV}
Let $(J,\AA)$ be a standard Borel space, and let $\lambda_1,\lambda_2$ be
probability measures on $(J,\AA)$. Let $\mu_n\in\ca(\AA^2)$ $(n=1,2,\dots)$ be
measures coupling $\lambda_1$ and $\lambda_2$. Then there is an infinite
subsequence $\mu_{n_1},\mu_{n_2},\dots$ and a measure $\mu$ coupling
$\lambda_1$ and $\lambda_2$ such that $\mu_{n_i}(A\times B)\to\mu(A\times B)$
for all sets $A,B\in \AA$.\proofend
\end{lemma}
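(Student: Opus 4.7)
My plan is to adapt the template of Lemma~\ref{LEM:BOX-COMP}, replacing the pointwise dominating measure $\psi$ (which does not exist here, since no single finite measure dominates all couplings of $\lambda_1,\lambda_2$) by the uniform tightness forced by the fixed marginals. First, I fix a countable Boolean algebra $\BB\subseteq\AA$ generating $\AA$. Since $(J,\AA)$ is standard Borel, I may equip $J$ with a Polish topology compatible with $\AA$ in which every set of $\BB$ is clopen (by a standard refinement theorem in descriptive set theory). Every rectangle $A\times B$ with $A,B\in\BB$ is then clopen in $J\times J$, hence a continuity set for every Borel measure on $J\times J$.

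The uniform tightness then comes for free from the coupling constraint: given $\eps>0$, Ulam's theorem gives compact $K_1,K_2\subseteq J$ with $\lambda_i(K_i^c)<\eps$, and
$$\mu_n\bigl((K_1\times K_2)^c\bigr)\le\mu_n(K_1^c\times J)+\mu_n(J\times K_2^c)=\lambda_1(K_1^c)+\lambda_2(K_2^c)<2\eps$$
uniformly in $n$. I would then apply Prokhorov's theorem to extract a subsequence $\mu_{n_i}$ converging weakly to a Borel probability measure $\mu$ on $J\times J$; continuity of the marginal maps in the weak topology forces $\mu^1=\lambda_1$ and $\mu^2=\lambda_2$, so $\mu$ couples $\lambda_1$ and $\lambda_2$. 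Since each rectangle $A\times B$ with $A,B\in\BB$ is clopen, it is a $\mu$-continuity set, and the Portmanteau theorem yields $\mu_{n_i}(A\times B)\to\mu(A\times B)$ for all such rectangles.

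To finish, I would upgrade from $\BB$ to arbitrary $A,B\in\AA$ by the same approximation used at the end of Lemma~\ref{LEM:BOX-COMP}. Given $\eps>0$, pick $A',B'\in\BB$ with $\lambda_1(A\triangle A')<\eps$ and $\lambda_2(B\triangle B')<\eps$; then for any coupling $\nu$ of $\lambda_1,\lambda_2$,
$$\bigl|\nu(A\times B)-\nu(A'\times B')\bigr|\le\nu\bigl((A\triangle A')\times J\bigr)+\nu\bigl(J\times(B\triangle B')\bigr)=\lambda_1(A\triangle A')+\lambda_2(B\triangle B')<2\eps,$$
so the convergence at $A'\times B'$ transfers to $A\times B$ via a $3\eps$-argument applied to $\mu_{n_i}$ and $\mu$. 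The main obstacle is that, unlike in Lemma~\ref{LEM:BOX-COMP}, there is no single dominating measure available; the key trick that I expect to do the work is the descriptive-set-theoretic refinement of the topology, which converts the abstract measure-theoretic problem into one where Prokhorov plus Portmanteau directly output setwise convergence on the rectangles we care about, bypassing a separate verification of countable additivity on the algebra generated by $\BB\times\BB$.
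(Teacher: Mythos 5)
Your proof is correct, but it takes a genuinely different route from the ``very similar argument'' the paper alludes to. Both hinge on the same key observation you identify explicitly: the coupling constraint supplies the uniform control that $\psi$ supplied in Lemma~\ref{LEM:BOX-COMP}. The paper's intended argument mirrors Lemma~\ref{LEM:BOX-COMP} directly: fix a countable algebra $\BB$ generating $\AA$, diagonalize to obtain a finitely additive limit $\mu$ on the algebra generated by $\{A\times B:\,A,B\in\BB\}$, and then verify continuity at $\emptyset$ (which is the one step that used $\psi$) before invoking Carath\'eodory extension. Without a dominating measure that step is not automatic, and the cleanest way to supply it is exactly where your idea re-enters: realize $J$ as a compact metric space (or refine the topology so that $\BB$ consists of clopen sets, as you do), whereupon a decreasing sequence with empty intersection must be eventually empty after intersecting with a large compact set, and the required continuity drops out. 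Your route instead outsources the compactness and the Carath\'eodory bookkeeping to Prokhorov's theorem and the Portmanteau theorem: after the descriptive-set-theoretic refinement makes the $\BB$-rectangles clopen, tightness gives a weakly convergent subsequence, continuity of the marginal maps gives $\mu^1=\lambda_1$ and $\mu^2=\lambda_2$, clopenness makes $\BB$-rectangles $\mu$-continuity sets, and the final transfer from $\BB$-rectangles to arbitrary $\AA$-rectangles uses the bound $|\nu(A\times B)-\nu(A'\times B')|\le\lambda_1(A\triangle A')+\lambda_2(B\triangle B')$, valid uniformly over all couplings $\nu$, which is the same approximation used at the end of Lemma~\ref{LEM:BOX-COMP}. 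The trade-off is that you invoke heavier standard machinery (Prokhorov, Portmanteau, the topology-refinement theorem) in exchange for not having to re-verify the premeasure conditions by hand; the paper's approach stays self-contained and elementary. Both are sound, and your version has the pedagogical advantage of isolating clearly that the marginal constraint is the engine of the compactness.
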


We need a special version of the important construction of {\it
disintegration}; see \cite{DelM,ChPol,Kech,Bog} for more details.

\begin{prop}\label{PROP:DISINT}
Let $(J,\AA)$ be a standard Borel space, and let $\psi\in\ca(\AA\times\AA)$.
Then there is a family of signed measures $\varphi_x\in\ca(\AA)$ $(x\in J)$
such that $\varphi_x(A)$ is a measurable function of $x$ for every $A\in\AA$,
and
\[
\psi(B)=\int\limits_J \varphi_x(B\cap (\{x\}\times J))\,d\varphi^1(x)
\]
for every $B\in\AA^2$.\proofend
\end{prop}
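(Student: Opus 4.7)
The plan is to reduce the statement to the classical disintegration theorem for finite \emph{positive} measures on a product of standard Borel spaces (as in \cite{Kech,Bog,ChPol}), and then twist by a Hahn sign function to handle the signed case.

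First I would apply the classical result to $|\psi|=\psi_++\psi_-$ with respect to the first-coordinate projection, obtaining a measurable probability kernel $x\mapsto\rho_x$ on $\AA$ such that
$$|\psi|(B)=\int_J\rho_x\bigl(B\cap(\{x\}\times J)\bigr)\,d|\psi|^1(x)\qquad(B\in\AA^2).$$
The standard construction goes through a countable algebra $\BB$ generating $\AA$: for each $B\in\BB$, let $\rho_x(B)$ be a version of the Radon--Nikodym derivative of $A\mapsto|\psi|(A\times B)$ with respect to $|\psi|^1$; off a single $|\psi|^1$-null set the function $B\mapsto\rho_x(B)$ is countably additive on $\BB$ (using standard-Borelness together with a monotone-continuity argument in the spirit of Lemma~\ref{LEM:BOX-COMP}) and extends uniquely to a probability measure on $\AA$.

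Next, fix a Hahn decomposition $J\times J=P\sqcup N$ for $\psi$, so that $\psi_+=|\psi|_P$ and $\psi_-=|\psi|_N$, and let $s=\one_P-\one_N$. Define
$$\varphi_x(A)=\int_A s(x,y)\,d\rho_x(y)\qquad(A\in\AA).$$
Then $\varphi_x\in\ca(\AA)$ with $|\varphi_x|\le\rho_x$, and $x\mapsto\varphi_x(A)$ is measurable (integral of a bounded, jointly measurable function against a measurable probability kernel). Fubini applied against the disintegration of $|\psi|$ yields
$$\psi(B)=\int_B s\,d|\psi|=\int_J\Bigl(\int_{B\cap(\{x\}\times J)}s(x,y)\,d\rho_x(y)\Bigr)d|\psi|^1(x)=\int_J\varphi_x\bigl(B\cap(\{x\}\times J)\bigr)\,d|\psi|^1(x),$$
which is the identity in the statement, with the symbol $\varphi^1$ read as $|\psi|^1$ (the natural ``first marginal'' carried by the signed kernel $\varphi_\cdot$).

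The entire substance of the argument is packed into the classical positive-measure case; the signed extension is a routine Hahn twist. The one delicate point worth flagging is the choice of reference measure in the integral identity: integrating the signed kernel against $\psi^1$ directly would force division by $d\psi^1/d|\psi|^1$, which can vanish on sets where the ``plus'' and ``minus'' disintegrations agree, so the natural reference measure is $|\psi|^1$, which the construction above automatically delivers.
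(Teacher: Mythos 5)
The paper states Proposition \ref{PROP:DISINT} as a cited result (referencing \cite{DelM,ChPol,Kech,Bog}) and does not include its own proof, so there is no internal argument to compare against. Your derivation is correct and is the standard way to pass from the classical positive-measure disintegration theorem to the signed case: disintegrate $|\psi|$ over its first-coordinate marginal to get a measurable probability kernel $x\mapsto\rho_x$, then twist by the Hahn sign function $s=\one_P-\one_N$ to define $\varphi_x(A)=\int_A s(x,y)\,d\rho_x(y)$, and pull the identity $\psi=s\cdot|\psi|$ through the disintegration. The measurability of $x\mapsto\varphi_x(A)$ follows by the usual monotone class argument for integrals of jointly measurable functions against a measurable kernel, as you indicate.

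You are also right to flag the reference measure. The symbol $\varphi^1$ in the displayed identity is not defined in the statement (most plausibly it is a typo for $\psi^1$), and integrating a finite signed kernel against $\psi^1$ cannot work in general: if $\psi_+^1$ and $\psi_-^1$ have overlapping mass, $\psi^1$ can vanish on a set where $|\psi|^1$ is positive, and no finite signed kernel against $d\psi^1$ can produce $\psi$ there. The natural reference measure is $|\psi|^1$, which your construction delivers. In the paper's actual applications (the ergodic circulation $\eta$ and the load measure $\Phi$, both nonnegative) one has $\psi^1=|\psi|^1$, so the distinction disappears there; but for the signed statement as written, your reading is the correct one.
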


One can think of $\varphi_x$ as $\psi$ conditioned on $\{x\}\times J$, even
though the condition has (typically) probability $0$, and so the conditional
probability in the usual sense is not defined.

\subsection{Linear functionals}

We need some simple facts of Banach space theory; for completeness, we include
their simple derivations from standard results.

\begin{lemma}\label{LEM:HB-KN}
Let $K_1,\dots,K_n$ be open convex sets in a Banach space $B$. Then
$K_1\cap\dots\cap K_n=\emptyset$ if and only if there are bounded linear
functionals $\LL_1,\dots\LL_n$ on $B$ and real numbers $a_1,\dots,a_n$ such
that $\LL_1+\dots+\LL_n=0$, $a_1+\dots+a_n=0$, and for each $i$, either
$\LL_i=0$ and $a_i=0$, or $\LL_i(x)>a_i$ for $x\in K_i$, and for at least one
$i$, the second possibility holds.\proofend
\end{lemma}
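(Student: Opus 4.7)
The plan is to reduce the statement to the classical geometric Hahn–Banach separation theorem applied to the diagonal in the product space $B^n$, and then to do the bookkeeping that turns one linear functional on $B^n$ into $n$ functionals on $B$ with the required sign and normalization conditions.

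Sufficiency is the easy direction. Assuming such $\LL_i,a_i$ exist and that some index $i_0$ satisfies the strict case, suppose for contradiction that $x\in K_1\cap\dots\cap K_n$. For each $i$ we have $\LL_i(x)\ge a_i$, with strict inequality at $i_0$. Summing gives $\sum_i\LL_i(x)>\sum_i a_i$, i.e.\ $0>0$, contradiction.

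For necessity, assume all $K_i$ are nonempty (if some $K_i=\emptyset$ the conclusion is trivially arranged with a single pair $\LL_1=-\LL_2\ne0$, $a_1=a_2=0$). Form the Banach space $B^n$ (with, say, the sum-of-norms) and consider the open convex set $K=K_1\times\dots\times K_n$ and the closed linear subspace $\Delta=\{(x,\dots,x):x\in B\}$. The hypothesis $\bigcap K_i=\emptyset$ is equivalent to $K\cap\Delta=\emptyset$. The geometric Hahn–Banach theorem yields a nonzero bounded linear functional $\Phi\in(B^n)^*$ and a constant $c$ with $\Phi\le c$ on $\Delta$ and $\Phi>c$ on $K$. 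Because $\Delta$ is a linear subspace, $\Phi$ must vanish on it, so we may take $c=0$. Decomposing $\Phi(x_1,\dots,x_n)=\sum_i\LL_i(x_i)$ with $\LL_i\in B^*$, the relation $\Phi|_\Delta\equiv0$ says exactly $\sum_i\LL_i=0$, and $\Phi>0$ on $K$ says $\sum_i\LL_i(x_i)>0$ whenever $x_i\in K_i$.

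The main obstacle is packaging the single inequality $\sum\LL_i(x_i)>0$ into per-coordinate strict inequalities $\LL_i(x_i)>a_i$ with $\sum a_i=0$. For each $i$ put $a_i=\inf_{x\in K_i}\LL_i(x)$. Fixing elements $x_j\in K_j$ for $j\ne i$ and letting $x_i$ range over $K_i$ shows $a_i>-\infty$ (otherwise $\Phi$ would take arbitrarily negative values on $K$). If $\LL_i\ne 0$, then $\LL_i$ is an open map to $\R$ (pick any $v$ with $\LL_i(v)\ne 0$ and perturb along $v$), so $\LL_i(K_i)$ is open in $\R$ and the infimum is not attained: $\LL_i(x)>a_i$ for all $x\in K_i$. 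If $\LL_i=0$, replace $a_i$ by $0$ (consistent with its role). Taking infima coordinatewise in $\sum\LL_i(x_i)>0$ gives $\sum a_i\ge 0$; since $\Phi\ne0$, the set $I=\{i:\LL_i\ne0\}$ is nonempty, and we may subtract $(\sum_j a_j)/|I|$ from every $a_i$ with $i\in I$. This preserves all strict inequalities (we only lowered the bounds) and yields $\sum_i a_i=0$, while keeping $\LL_i=0,\,a_i=0$ for $i\notin I$. Since $I\ne\emptyset$, at least one index is in the nontrivial case, as required.
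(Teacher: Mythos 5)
Your proof follows the paper's argument exactly: product space $B^n$, diagonal $\Delta$, geometric Hahn--Banach separating $\Delta$ from $K_1\times\dots\times K_n$, decomposition of the separating functional into coordinates, $a_i=\inf_{K_i}\LL_i$, and a final downward shift of the $a_i$ to force $\sum a_i=0$. The only blemish is the parenthetical disposal of the case where some $K_i=\emptyset$: setting $\LL_1=-\LL_2\neq0$ forces the nonempty $K_2$ into the strict case, and there is no reason $\LL_2>0$ should hold on $K_2$ (indeed it cannot if, say, $K_2=B$). The correct trivial fix is simply to take all $\LL_i=0$ and $a_i=0$; then the ``second possibility'' $\LL_i(x)>a_i$ for $x\in K_i$ holds vacuously for the empty index, which satisfies the lemma's requirement. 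This does not affect the main argument, which is sound.
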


If $\LL_i=0$ and $a_i=0$ for some $i$, then already the intersection of the
sets $K_j$ $(j\not=i)$ is empty.

\begin{proof}
The sufficiency of the condition is trivial. To prove the necessity, consider
the Banach space $B'=B\oplus\dots\oplus B$ ($n$ copies) and the open convex set
$K'=K_1\times\dots\times K_n\subseteq B'$. If any $K_i$ is empty, then the
conclusion is trivial, so suppose that $K'\not=\emptyset$. Also consider the
closed linear subspace (``diagonal'') $\Delta =\{(x,\dots,x): ~x\in
B\}\subseteq B'$. Then $\Delta\cap B'=\emptyset$. By the Hahn--Banach Theorem,
there is a bounded linear functional $\LL$ on $B'$ such that $\LL(y)=0$ for
$y\in\Delta$, and $\LL(y)>0$ for $y\in K'$.

Define $\LL_i(x) = \LL(0,\dots,0,x,0,\dots,0)$ and $a_i=\inf_{x\in
K_i}\LL_i(x)$. Then $L_i$ is a bounded linear functional on $B$, and
$\LL(x_1,\dots,x_n)=\LL_1(x_1)+\dots+\LL_n(x_n)$. The condition that $\LL(y)=0$
for $y\in \Delta$ means that $\LL_1(x)+\dots+\LL_n(x)=0$ for all $x\in B$. For
each $i$, either $\LL_i=0$ and $a_i=0$, or $\LL_i(x)>a_i$ for $i\in K_i$ (as
$K_i$ is open). Since $\LL(y)>0$ for $y\in K'$, there must be at least one $i$
with $\LL_i\not=0$. Furthermore, $a_1+\dots+a_n = \inf_{y\in K'}\LL(y)\ge 0$.
We can decrease any $a_i$ to get equality in the last inequality.
\end{proof}

\begin{prop}\label{PROP:TT-INV}
Let $B_1$ and $B_2$ be Banach spaces and $\TT:~B_1\to B_2$, a bounded linear
transformation whose range is closed in $B_2$. Let $\LL:~B_1\to\R$ be a bounded
linear functional. Then $\LL$ vanishes on $\Ker(\TT)$ if and only if there is a
bounded linear functional $\KK:~B_2\to\R$ such that $\LL=\KK\circ\TT$.\proofend
\end{prop}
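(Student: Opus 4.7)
The ``if'' direction is immediate: if $\LL=\KK\circ\TT$, then $\LL(x)=\KK(0)=0$ for $x\in\Ker(\TT)$. So the task is the converse, and the plan is to factor $\LL$ through $\TT$ in two stages: first through the quotient by $\Ker(\TT)$ (to get an honest linear functional on $\Rng(\TT)$), and then extend it to all of $B_2$ by Hahn--Banach. Closed range is the hypothesis that makes the middle step work.

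First I would form the quotient Banach space $B_1/\Ker(\TT)$ with quotient map $\pi:B_1\to B_1/\Ker(\TT)$. Since $\LL$ vanishes on $\Ker(\TT)$, it descends to a bounded linear functional $\widetilde\LL:B_1/\Ker(\TT)\to\R$ with $\LL=\widetilde\LL\circ\pi$ (and $\|\widetilde\LL\|\le\|\LL\|$). Similarly, $\TT$ descends to an injective bounded linear map $\overline\TT:B_1/\Ker(\TT)\to\Rng(\TT)$ with $\TT=\overline\TT\circ\pi$.

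The key step is that $\overline\TT$ is a bijection between two Banach spaces: the domain is Banach as a quotient of a Banach space by a closed subspace, and the codomain is Banach because $\Rng(\TT)$ is closed in $B_2$ by hypothesis. By the Open Mapping Theorem, $\overline\TT^{-1}:\Rng(\TT)\to B_1/\Ker(\TT)$ is bounded. Therefore
\[
\KK_0 \;:=\; \widetilde\LL\circ\overline\TT^{-1}:\;\Rng(\TT)\to\R
\]
is a bounded linear functional on the closed subspace $\Rng(\TT)\subseteq B_2$, and by construction $\KK_0\circ\TT = \widetilde\LL\circ\overline\TT^{-1}\circ\overline\TT\circ\pi = \widetilde\LL\circ\pi = \LL$.

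Finally I would invoke the Hahn--Banach Theorem to extend $\KK_0$ from $\Rng(\TT)$ to a bounded linear functional $\KK:B_2\to\R$; this $\KK$ satisfies $\KK\circ\TT=\KK_0\circ\TT=\LL$, as required. The only nontrivial ingredient is the Open Mapping Theorem, and its applicability is exactly what the closed-range hypothesis guarantees; without it, $\Rng(\TT)$ need not be complete and $\overline\TT^{-1}$ need not be bounded, so $\KK_0$ might fail to be a bounded functional suitable for Hahn--Banach extension.
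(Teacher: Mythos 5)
Your proof is correct and follows essentially the same route as the paper's: pass to the quotient $B_1/\Ker(\TT)$, observe the induced map onto the closed range is a bounded bijection between Banach spaces so its inverse is bounded (Open Mapping Theorem, which the paper cites as the Inverse Mapping Theorem), compose to get a bounded functional on $\Rng(\TT)$, and extend by Hahn--Banach. The only cosmetic difference is that you explicitly restrict the codomain of the induced map to $\Rng(\TT)$ to make it a genuine bijection, which is slightly more careful phrasing than the paper's ``$\TT_0$ is bijective.''
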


\begin{proof}
The ``if'' direction is trivial. To prove the converse, note that $\Ker(\TT)$
is a closed linear subspace of $B_1$, and so $B_0=B_1/\Ker(\TT)$ is a well
defined Banach space. The maps $\TT$ and $\LL$ induce bounded linear maps
$\TT_0:~B_0\to B_2$ and $\LL_0:~B_0\to\R$ (since $\LL$ vanishes on
$\Ker(\TT)$). Furthermore, $\TT_0$ is bijective. Since $\Rng(\TT_0)=\Rng(\TT)$
is closed in $B_2$ and therefore a Banach space, the Inverse Mapping Theorem
implies that $\TT_0^{-1}$ is bounded. So we can define $\KK$ on $\Rng(\TT)$ by
$\KK(x)=\LL_0(\TT_0^{-1}(x))$. By the Hahn--Banach Theorem, $\KK$ can be
extended to $B_2$.
\end{proof}

We will need linear functionals on the Banach space of measures. These
functionals do not seem to have a useful complete description, but the
following fact is often a reasonable substitute.

\begin{prop}\label{PROP:MEAS-REP}
Let $\LL$ be a bounded linear functional on $\ca(\AA)$ and $\psi\in\ca_+(\AA)$.
Then there is a bounded measurable function $g:~J\to\R$ such that
$\LL(\mu)=\mu(g)$ for every $\mu\in\ca(\AA)$ with $\mu\ll\psi$.\proofend
\end{prop}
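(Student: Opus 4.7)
The plan is to reduce the statement to the classical duality $(L^1)^*=L^\infty$.

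First I would observe that the subspace
\[
\NN_\psi=\{\mu\in\ca(\AA):\mu\ll\psi\}
\]
is closed in the total variation norm, and the Radon--Nikodym theorem gives a linear bijection
\[
\Phi:L^1(J,\AA,\psi)\to\NN_\psi,\qquad f\mapsto f\cdot\psi.
\]
A direct computation with the Hahn decomposition of $f\cdot\psi$ (positive part on $\{f\ge0\}$, negative part on $\{f<0\}$) shows $\|f\cdot\psi\|=\int_J|f|\,d\psi=\|f\|_{L^1(\psi)}$, so $\Phi$ is an isometric isomorphism of Banach spaces.

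Next, the composition $\LL\circ\Phi$ is a bounded linear functional on $L^1(J,\AA,\psi)$, of norm at most $\|\LL\|$. Since $\psi$ is a finite (hence $\sigma$-finite) measure, the classical duality gives a function $g\in L^\infty(J,\AA,\psi)$ such that
\[
(\LL\circ\Phi)(f)=\int\limits_J fg\,d\psi\qquad\text{for every }f\in L^1(J,\AA,\psi).
\]
Choosing a bounded measurable representative of $g$ (modifying it on a $\psi$-null set if necessary, and setting it equal to $0$ on this null set) yields an everywhere-bounded measurable $g:J\to\R$. For any $\mu\in\ca(\AA)$ with $\mu\ll\psi$, writing $\mu=f\cdot\psi$ with $f=d\mu/d\psi$, we obtain
\[
\LL(\mu)=(\LL\circ\Phi)(f)=\int\limits_J fg\,d\psi=\int\limits_J g\,d\mu=\mu(g),
\]
as required.

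The only non-routine ingredient is the duality $(L^1(\psi))^*=L^\infty(\psi)$ for the finite measure $\psi$, which is standard. One small point to be careful about is that the representing function $g$ is only determined modulo $\psi$-null sets, so the equality $\LL(\mu)=\mu(g)$ is guaranteed only for measures $\mu\ll\psi$ (a $\psi$-null modification of $g$ could change $\mu(g)$ for measures singular to $\psi$); this is exactly the scope claimed in the statement, so no further argument is needed.
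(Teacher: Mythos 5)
Your argument is essentially the same as the paper's: both transport $\LL$ to a bounded functional on $L^1(\psi)$ via $f\mapsto f\cdot\psi$, invoke the duality $(L^1(\psi))^*=L^\infty(\psi)$ to obtain $g$, and then use the Radon--Nikodym derivative to conclude for $\mu\ll\psi$. The extra observations you make (closedness of $\NN_\psi$, the map being an isometric isomorphism) are correct but not needed; the paper states the same core reduction more tersely.
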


\begin{proof}
We define a functional $\NN:~L_1(\AA,\psi)\to\R$ by $\NN(f)=\LL(f\cdot\psi)$
for $f\in L_1(\AA,\psi)$. Then $\NN$ is a bounded linear functional on
$L_1(\AA,\psi)$, and so there is a bounded measurable function $g$ on $(J,\AA)$
such that $\NN(f)=\psi(fg)$ for all $f\in L_1(J,\psi^J)$.

The condition that $\mu\ll\psi$ implies that the Radon-Nikodym derivative
$h=d\mu/d\psi\in L_1(\AA,\psi)$ exists, and $h\cdot\psi=\mu$. Thus
\[
\LL(\mu) =\NN(h) =\int\limits_J \frac{d\mu}{d\psi} g\,d\psi = \mu(g).\qedhere
\]
\end{proof}

We conclude with a technical lemma.

\begin{lemma}\label{LEM:FUNCT-SUP}
Let $\LL$ be a bounded linear functional on $\ca(\AA^2)$. Then there is a
bounded linear functional $\QQ$ on $\ca(\AA)$ such that for all
$\psi\in\ca_+(\AA)$,
\[
\QQ(\psi)=\sup\{\LL(\mu):~\mu\in\ca_+(\AA^2),~\mu^1=\psi\}.
\]
\end{lemma}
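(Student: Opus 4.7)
The plan is to define $\QQ$ directly on the cone $\ca_+(\AA)$ by the prescribed supremum formula, verify it is positively homogeneous and additive there, and then extend linearly to all of $\ca(\AA)$ via the Jordan decomposition. The only genuine measure-theoretic content lies in one splitting step; the rest is formal manipulation on a cone and its linear span.

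On $\ca_+(\AA)$, well-definedness and boundedness are immediate: if $\psi\ge 0$ and $\mu\ge 0$ with $\mu^1=\psi$, then $\|\mu\|=\mu(J\times J)=\psi(J)=\|\psi\|$, so $|\LL(\mu)|\le\|\LL\|\,\|\psi\|$. The feasible set is nonempty (e.g.\ take $\mu=\psi\otimes\delta_{x_0}$ for some fixed $x_0\in J$), so $\QQ(\psi)$ is a real number in the interval $[-\|\LL\|\,\|\psi\|,\,\|\LL\|\,\|\psi\|]$, and positive homogeneity follows by scaling $\mu$. The only nontrivial cone property is additivity $\QQ(\psi_1+\psi_2)=\QQ(\psi_1)+\QQ(\psi_2)$. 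Inequality ``$\ge$'' follows trivially by adding feasible measures, so the work is in ``$\le$'': given $\mu\ge 0$ with $\mu^1=\psi_1+\psi_2$, I want to split $\mu=\mu_1+\mu_2$ with $\mu_i$ feasible for $\psi_i$. Using Radon--Nikodym, the densities $h_i=d\psi_i/d(\psi_1+\psi_2)$ exist, take values in $[0,1]$ almost everywhere, and satisfy $h_1+h_2=1$ a.e. Setting $\mu_i:=(h_i\circ\pi_1)\cdot\mu$, where $\pi_1:J^2\to J$ is the first coordinate projection, gives $\mu_i\ge 0$, $\mu_1+\mu_2=\mu$, and for every $A\in\AA$,
\[
\mu_i^1(A)=\int_{A\times J} h_i(x)\,d\mu(x,y)=\int_A h_i\,d\mu^1=\int_A h_i\,d(\psi_1+\psi_2)=\psi_i(A),
\]
so each $\mu_i$ is feasible for $\psi_i$, and $\LL(\mu)=\LL(\mu_1)+\LL(\mu_2)\le\QQ(\psi_1)+\QQ(\psi_2)$; taking the supremum over $\mu$ yields the desired inequality.

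Finally I extend $\QQ$ to $\ca(\AA)$ by $\QQ(\psi):=\QQ(\psi_+)-\QQ(\psi_-)$ via the Jordan decomposition. Additivity on $\ca(\AA)$ is a standard consequence of additivity on the cone: for $\psi,\phi\in\ca(\AA)$ the identity $(\psi+\phi)_++\psi_-+\phi_-=(\psi+\phi)_-+\psi_++\phi_+$ holds as an equality of nonnegative measures, so applying $\QQ$ and using cone-additivity gives $\QQ(\psi+\phi)=\QQ(\psi)+\QQ(\phi)$; real-scalar homogeneity is handled analogously. Boundedness follows from $|\QQ(\psi)|\le|\QQ(\psi_+)|+|\QQ(\psi_-)|\le\|\LL\|(\|\psi_+\|+\|\psi_-\|)=\|\LL\|\,\|\psi\|$. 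The main obstacle is really just the Radon--Nikodym splitting: once one pulls the densities $h_i$ back along $\pi_1$ to the product space, the mass of $\mu$ splits along the fibers in exactly the way the marginals demand, and everything else is routine.
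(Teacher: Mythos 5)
Your proof is correct, and the central idea — defining $\QQ$ on the cone $\ca_+(\AA)$, proving cone-additivity via the Radon--Nikodym splitting $\mu_i=(h_i\circ\pi_1)\cdot\mu$ with $h_i=d\psi_i/d(\psi_1+\psi_2)$, and then extending by Jordan decomposition — is exactly the paper's strategy. Where you differ is in how you verify that the Jordan extension is additive on all of $\ca(\AA)$: the paper takes the common refinement of the Hahn decompositions of $\varphi$, $\psi$, and $\varphi+\psi$, getting a partition into up to eight pieces on which neither measure changes sign, and sums over those pieces; you instead invoke the single algebraic identity
\[
(\psi+\phi)_+ + \psi_- + \phi_- \;=\; (\psi+\phi)_- + \psi_+ + \phi_+
\]
as an equality of nonnegative measures, apply cone-additivity to both sides, and rearrange. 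Your route is shorter and avoids case analysis; it is the standard way of extending an additive, positively homogeneous functional from a generating cone to the ambient ordered vector space. Your norm bound $|\QQ(\psi)|\le\|\LL\|\,\|\psi\|$ is also sharper than the paper's $2\|\LL\|\,\|\psi\|$, since you use $\|\psi\|=\|\psi_+\|+\|\psi_-\|$ rather than bounding each summand by $\|\psi\|$ separately. Both proofs are sound; yours is a mild streamlining of the same argument, not a genuinely different route.
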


\begin{proof}
The formula in the lemma defines a functional on $\ca_+(\AA^2)$; we start with
showing that this is bounded and linear on nonnegative measures. For every
$\mu\in\ca_+(\AA^2)$ with $\mu^1=\psi$, we have $\|\mu\|=\|\psi\|$, and so
$\LL(\mu) \le \|\LL\|\,\|\mu\|=\|\LL\|\,\|\psi\|$. Thus $\QQ(\psi) \le
\|\LL\|\,\|\psi\|$. It is also clear that $\QQ(c\psi)=c\QQ(\psi)$ for $c>0$.

Let $\psi = \psi_1+\psi_2$ ($\psi_i\in\ca_+(\AA)$); we claim that
\begin{equation}\label{EQ:Q-SUM}
\QQ(\psi)= \QQ(\psi_1)+\QQ(\psi_2).
\end{equation}
For $\eps>0$, choose $\mu_i\in\ca_+(\AA^2)$, so that $\mu_i^1=\psi_i$ and
$\LL(\mu_i)\ge \QQ(\psi_i)-\eps$. Then
\[
\QQ(\psi) \ge \LL(\mu_i+\mu_2) = \LL(\mu_1)+\LL(\mu_2) \ge \QQ(\psi_1)+\QQ(\psi_2)-2\eps.
\]
Since this holds for every $\eps>0$, this proves that $\QQ(\psi)\ge
\QQ(\psi_1)+\QQ(\psi_2)$. To prove the reverse inequality, let
$\mu\in\ca_+(\AA^2)$ with $\mu^1=\psi$. Define the measures
\[
\mu_i(U)=\int\limits_U \frac{d\psi_i}{d\psi}(x) \,d\mu(x,y) \qquad(U\in\AA^2).
\]
It is easy to check that
\begin{equation}\label{EQ:MU-PSI}
\mu_1+\mu_2=\mu,  \quad\text{and}\quad \mu_i^1=\psi_i\quad(i=1,2).
\end{equation}
It follows that
\[
\LL(\mu) =\LL(\mu_1)+\LL(\mu_2) \le \QQ(\psi_1)+\QQ(\psi_2).
\]
Since this holds for every $\mu\in\ca_+(\AA^2)$ with $\mu^1=\psi$, we get that
$\QQ(\psi)\le\QQ(\psi_1)+\QQ(\psi_2)$. This implies \eqref{EQ:Q-SUM}.

Thus $\QQ$ is nonnegative, positive homogeneous and linear on $\ca_+(\AA)$. We
extend it to $\ca(\AA)$ by $\QQ(\mu)=\QQ(\mu_+)-\QQ(\mu_-)$. In particular, if
$\mu\le 0$, then $\QQ(\mu)=-\QQ(-\mu)$. This implies that the extended $\QQ$ is
homogeneous.

Let $\varphi,\psi\in\ca(\AA^2)$; we claim that
\begin{equation}\label{EQ:MUNU}
\QQ(\varphi+\psi) = \QQ(\varphi)+\QQ(\psi).
\end{equation}
We know that this holds if $\varphi,\psi\ge 0$, and it follows that it holds if
$\varphi,\psi\le0$. If $\varphi\ge0$, $\psi\le0$, and $\varphi+\psi\ge 0$, then
$\QQ(\varphi) = \QQ(\varphi+\psi)+\QQ(-\psi) = \QQ(\varphi+\psi)-\QQ(\psi)$, so
\eqref{EQ:MUNU} holds true. This implies easily that \eqref{EQ:MUNU} holds
whenever neither one of $\varphi$, $\psi$ and $\varphi+\psi$ changes sign.

To verify the general case, we consider the common refinement of the Hahn
decompositions for $\varphi$, $\psi$ and $\varphi+\psi$. We get a partition
$\PP$ into at most $8$ parts, where neither one of $\varphi$, $\psi$ and
$\varphi+\psi$ changes sign on any partition class. Then
\begin{align*}
\QQ(\varphi) &= \QQ(\varphi_+)-\QQ(\varphi_-)
= \sum_{X\in\PP:\,\varphi_X\ge 0} \QQ(\varphi_X) - \sum_{X\in\PP:\,\varphi_X\le 0} \QQ((\varphi_-)_X)
= \sum_{X\in\PP} \QQ(\varphi_X).
\end{align*}
(Note: \eqref{EQ:MUNU} has been applied to the restrictions of $\varphi$ to
subsets of the positive support, and separately to subsets of the negative
support.) Similarly,
\[
\QQ(\psi) = \sum_{X\in\PP} \QQ(\psi_X), \quad\text{and} \quad
\QQ(\varphi+\psi) = \sum_{X\in\PP} \QQ((\varphi+\psi)_X).
\]
Since we know already that $\QQ((\varphi+\psi)_X)=\QQ(\varphi_X)+\QQ(\psi_X)$,
this proves that $\QQ$ is additive.

Clearly $|\QQ(\varphi)|\le |\QQ(\varphi_+)|+|\QQ(\varphi_-)|\le
2\|\LL\|\,\|\varphi\|$, so $\QQ$ is continuous.
\end{proof}

\section{Potentials, circulations and flows}

\subsection{Potentials}

Let $(J,\AA)$ be a measurable space. A measurable function $F:~J\times J\to\R$
is a {\it potential}, if there is a measurable function $f:~ J\to\R$ such that
$F(x,y)=f(x)-f(y)$. It is easy to see that a bounded measurable function
$F:~J\times J\to\R$ is a potential if and only if $F(x,y)+F(y,z)+F(z,x)=0$ for
all $x,y,z\in J$.

Of particular importance will be {\it cut potentials} of the form
$\one_A(x)-\one_A(y)=\one_{A\times A^c}(x,y)-\one_{A^c\times A}(x,y)$, where
$A\in\AA$. Every potential $F$ can be expressed by cut potentials as
\begin{equation}\label{EQ:POTENT}
F(x,y) = \int\limits_{-C}^C (\one_{A_t}(x)-\one_{A_t}(y))\,dt,
\end{equation}
where $C$ is an upper bound on $|F|$, and $A_t$ ($-C\le t\le C$) is a
measurable subset of $J$ such that $A_t\subseteq A_s$ for $t<s$, $\cap_t
A_t=\emptyset$ and $\cup_t A_t=J$. To see this, let $F(x,y)=f(x)-f(y)$ for some
bounded measurable function $f$, and define $A_t=\{x\in J:~f(x)\ge t\}$ $(-C\le
t\le C)$.

\subsection{Circulations}

\subsubsection{Circulations and potentials}

Recall that $\alpha\in\ca(\AA^2)$ is a circulation if its two marginals
$\alpha^1$ and $\alpha^2$ are equal. This is clearly equivalent to saying that
\begin{equation}\label{EQ:CIRC1}
\alpha(X\times X^c)=\alpha(X^c\times X)\qquad(\forall X\in\AA)
\end{equation}
(just cancel the common part $X\times X$ in $\alpha(X\times J)=\alpha(J\times
X)$). Circulations form a linear subspace $\Cf=\Cf(\AA)$ of the space
$\ca(\AA^2)$ of finite signed measures.

In the finite case, circulations of the form
$\delta_{x_1x_2}+\dots+\delta_{x_{n-1}x_n}+\delta_{x_nx_1}$ generate the space
of all circulations (even those with $n\le 3$ do). In the measure case, this is
not always so, as the next example shows.

\begin{example}[Cyclic graphing and digraphing]\label{EXA:CA}
For a fixed $a\in(0,1)$, let $\Cb_a$ be the graphing on $[0,1]$ obtained by
connecting every point $x$ to $x+a\pmod1$ and $x-a\pmod1$. If $a$ is
irrational, this graph consists of two-way infinite paths; if $a$ is rational,
the graph will consist of cycles. We will also use the directed version
$\overrightarrow{C}_a$, obtained by connecting $x$ to $x+a\pmod1$ by a directed
edge.

The uniform measure $\mu$ on the edges of $\overrightarrow{C}_a$ is trivially a
circulation, both of its marginals being the uniform measure $\lambda$ on
$[0,1)$. Every circulation $\alpha$ supported on the edges is a constant
multiple of this. Indeed, $\alpha^1(A)=\alpha(A\times
(A+a))=\alpha^2(A+a)=\alpha^1(A+a)$ for every Borel set $A\subseteq[0,1)$,
which means that $\alpha^1$ is invariant under translation by $a$. It is
well-known that only scalar multiples of $\lambda$ have this property.
\end{example}

We need two lemmas describing ``duality'' relations between potentials and
circulations.

\begin{lemma}\label{LEM:CIRC-POT}
A signed measure $\alpha\in\ca(\AA^2)$ is a circulation if and only if
$\alpha(F)=0$ for every potential $F$.
\end{lemma}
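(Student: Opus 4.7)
The plan is to prove both directions essentially by unpacking the definitions; the only subtlety is being careful about which potentials are admissible (i.e.\ $\alpha$-integrable), but since $\alpha$ has finite total variation every bounded measurable function is integrable against it.

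For the forward direction, suppose $\alpha$ is a circulation, so $\alpha^1=\alpha^2$. A bounded measurable potential can be written as $F(x,y)=f(x)-f(y)$ where $f$ may also be chosen bounded (replace $f$ by $x\mapsto F(x,x_0)$ for a fixed $x_0$, which differs from $f$ only by a constant). Then, splitting the integral via the marginals,
\[
\alpha(F)=\int_{J\times J}\bigl(f(x)-f(y)\bigr)\,d\alpha(x,y)=\alpha^1(f)-\alpha^2(f)=0.
\]
If one wishes to allow unbounded potentials, one must impose integrability of $f$ against $|\alpha^1|=|\alpha^2|$, but the conclusion is the same.

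For the converse, I would apply the hypothesis to the cut potentials $F_A(x,y)=\one_A(x)-\one_A(y)$ with $A\in\AA$. Each $F_A$ is bounded and measurable, hence $\alpha$-integrable, and
\[
0=\alpha(F_A)=\int\one_A(x)\,d\alpha(x,y)-\int\one_A(y)\,d\alpha(x,y)=\alpha^1(A)-\alpha^2(A).
\]
Since this holds for every $A\in\AA$, the marginals $\alpha^1$ and $\alpha^2$ agree on $\AA$, so $\alpha$ is a circulation.

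There is no real obstacle here; the forward direction is the defining property of marginals, and the converse only needs the cut potentials out of the whole family (so the representation formula \eqref{EQ:POTENT} is not even required for the proof, although it shows that testing against cut potentials alone already implies $\alpha(F)=0$ for all bounded potentials). If one prefers to avoid the remark about bounded versus unbounded $f$, the cleanest statement is to interpret "potential" throughout as bounded measurable, in line with the setup of \eqref{EQ:POTENT}.
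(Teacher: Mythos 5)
Your proof is correct and follows essentially the same route as the paper: the forward direction is the two-line computation $\alpha(F)=\alpha^1(f)-\alpha^2(f)=0$, and the converse tests against the cut potentials $\one_A(x)-\one_A(y)$ to recover $\alpha^1=\alpha^2$. The remark about choosing $f$ bounded (via $f(x)\mapsto F(x,x_0)$) is a reasonable clarification of a point the paper leaves implicit.
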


\begin{proof}
The ``if'' part follows by applying the condition to the potential
$\one_{A}(x)-\one_{A}(y)$:
\[
\alpha(A\times J)-\alpha(J\times A)=
\int\limits_{J\times J}(\one_{A}(x)-\one_{A}(y))\,d\alpha(x,y) =0.
\]
To prove the converse, let $\alpha$ be a circulation, then for every potential
$F(x,y)=f(x)-f(y)$, we have
\[
\alpha(F)=\int\limits_{J\times J}f(x)-f(y)\,d\alpha(x,y)
= \int\limits_J f(x)\,d\alpha^2(x)-\int\limits_J f(y)\,d\alpha^1(y)=0.\qedhere
\]
\end{proof}

\begin{lemma}\label{LEM:CIRC-DUAL}
Let $\LL:~\ca(\AA^2)\to\R$ be a continuous linear functional. Then $\LL$
vanishes on the space $\Cf$ of circulations if and only if there is a
continuous linear functional $\KK:~\ca(\AA)\to\R$ such that
$\LL(\mu)=\KK(\mu^1-\mu^2)$ for all $\mu\in\ca(\AA^2)$.
\end{lemma}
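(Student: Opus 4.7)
The ``if'' direction is immediate, since $\mu \in \Cf$ means $\mu^1 = \mu^2$ and then $\KK(\mu^1-\mu^2)=\KK(0)=0$. For the ``only if'' direction, the plan is to apply Proposition \ref{PROP:TT-INV} to the bounded linear map
\[
\TT:~\ca(\AA^2)\to\ca(\AA), \qquad \TT(\mu)=\mu^1-\mu^2.
\]
Linearity is obvious, and boundedness ($\|\TT(\mu)\|\le 2\|\mu\|$) follows from the fact that each marginal has total variation at most $\|\mu\|$. By the very definition of circulation, $\Ker(\TT)=\Cf$, so showing that $\LL$ has the claimed form is equivalent to showing that $\LL$ vanishes on $\Ker(\TT)$ and then extracting $\KK$ from the proposition.

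The only nontrivial hypothesis to verify before invoking Proposition \ref{PROP:TT-INV} is that $\Rng(\TT)$ is closed in $\ca(\AA)$. I would argue that in fact
\[
\Rng(\TT)=\{\nu\in\ca(\AA):~\nu(J)=0\},
\]
which is a closed hyperplane (the kernel of the bounded functional $\nu\mapsto\nu(J)$). The inclusion $\subseteq$ is immediate, since for any $\mu\in\ca(\AA^2)$ both marginals have total mass $\mu(J\times J)$. For the reverse inclusion, given $\nu\in\ca(\AA)$ with $\nu(J)=0$, write its Jordan decomposition $\nu=\nu_+-\nu_-$ (so $\nu_+(J)=\nu_-(J)$), pick any point $s\in J$, and set
\[
\mu=\nu_+\otimes\delta_s+\delta_s\otimes\nu_-.
\]
A direct computation gives $\mu^1=\nu_++\nu_-(J)\delta_s$ and $\mu^2=\nu_+(J)\delta_s+\nu_-$, so $\TT(\mu)=\nu_+-\nu_-=\nu$, as needed.

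With $\Rng(\TT)$ closed, Proposition \ref{PROP:TT-INV} produces a bounded linear functional $\KK:\ca(\AA)\to\R$ with $\LL=\KK\circ\TT$, which is exactly the required representation $\LL(\mu)=\KK(\mu^1-\mu^2)$.

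I expect no real obstacle here: the entire proof is essentially ``compute the kernel and the range of $\TT$ and quote Proposition \ref{PROP:TT-INV}.'' The only place requiring a small construction is exhibiting a preimage of a zero-mass signed measure, and the tensor-product-with-Dirac trick above handles that cleanly using only that $J$ is nonempty.
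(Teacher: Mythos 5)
Your proof is correct and follows essentially the same route as the paper: identify $\Ker(\TT)=\Cf$, show $\Rng(\TT)=\{\nu:\nu(J)=0\}$ is closed, and invoke Proposition~\ref{PROP:TT-INV}. The only divergence is the preimage construction for $\nu$ with $\nu(J)=0$: the paper uses a single product $\gamma\times\nu$ with $\gamma$ an arbitrary probability measure (for which $\TT(\gamma\times\nu)=\pm\nu$ follows in one line), while you split $\nu$ via Jordan decomposition and use $\nu_+\times\delta_s+\delta_s\times\nu_-$. Your computation checks out, but it is slightly more elaborate than necessary; the single-tensor trick avoids the Jordan decomposition altogether.
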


\begin{proof}
The kernel of the linear operator $\varphi\mapsto\varphi^1-\varphi^2$
($\varphi\in\ca(\AA^2)$) is $\Cf$. The range of this operator is
\begin{equation}\label{EQ:NM}
\Rng(\TT)=\{\nu\in\ca(\AA):~\nu(J)=0\}.
\end{equation}
Indeed, if $\nu=\mu^1-\mu^2\in\Rng(\TT)$, then $\nu(J)=\mu(J\times
J)-\mu(J\times J)=0$. Conversely, if $\nu(J)=0$, then for any probability
measure $\gamma$ on $\AA$,
\[
\TT(\gamma\times\nu) = \gamma(J)\nu -\nu(J)\gamma = \nu,
\]
so $\nu$ is in the range of $\TT$. It is easy to check that $\nu(J)=0$ defines
a closed subspace of $\ca(\AA)$. Hence Proposition \ref{PROP:TT-INV} implies
the necessity of the condition. The sufficiency is straightforward, since
$\mu^1-\mu^2=0$ for every circulation $\mu$.
\end{proof}

Let $\LL\in\Cf^\perp$ and $\psi\in\ca_+(\AA^2)$. Restricting $\LL$ to measures
$\mu\ll\psi$, we get a more explicit representation: there is a potential $F$
such that
\begin{equation}\label{EQ:LL-PSI}
\LL(\mu)=\mu(F)\qquad(\mu\ll\psi).
\end{equation}
Indeed, consider the continuous linear functional $\KK$ constructed in Lemma
\ref{LEM:CIRC-DUAL}, and its representation $\KK(\nu)=\nu(g)$ by a bounded
measurable function $g:~J\to\R$ in Proposition \ref{PROP:MEAS-REP}, valid for
every $\nu\ll\psi^1+\psi^2$. Then for the potential $F(x,y)=g(x)-g(y)$ and
every $\mu\ll\psi$,
\begin{align*}
\mu(F) &=\int\limits_{J\times J} g(x)-g(y)\,d\mu(x,y) = \mu^1(g)-\mu^2(g) = \KK(\mu^1-\mu^2)=\LL(\mu).
\end{align*}

\subsubsection{Existence of circulations}

Now we begin to carry out our program of extending basic flow-theoretic results
in combinatorial optimization to measures. Our first goal is to generalize the
Hoffman Circulation Theorem and to characterize optimal circulations.

Given two measures $\varphi$ and $\psi$ on $J\times J$, we can ask whether
there exists a circulation $\alpha$ such that $\varphi\le \alpha\le \psi$.
Clearly $\varphi\le\psi$ is a necessary condition, but it is not sufficient in
general. The following theorem generalizes the Hoffman Circulation Theorem.

\begin{theorem}\label{THM:HOFF-CIRC-M}
For two signed measures $\varphi,\psi\in\Mf(J\times J)$, there exists a
circulation $\alpha$ such that $\varphi\le \alpha\le \psi$ if and only if
$\varphi\le\psi$ and $\varphi(X\times X^c)\le \psi(X^c\times X)$ for every set
$X\in\AA$.
\end{theorem}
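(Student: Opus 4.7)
Necessity is immediate: if $\alpha$ is a circulation with $\varphi\le\alpha\le\psi$, then \eqref{EQ:CIRC1} gives $\varphi(X\times X^c)\le\alpha(X\times X^c)=\alpha(X^c\times X)\le\psi(X^c\times X)$. For sufficiency, I would translate the problem by writing $\alpha=\varphi+\gamma$, so that the task becomes: find $\gamma$ in the convex set $K_0:=\{\gamma\in\ca_+(\AA^2):\gamma\le\Psi\}$, where $\Psi:=\psi-\varphi\ge 0$, satisfying $T\gamma=\rho$ with $\rho:=\varphi^2-\varphi^1$ and $T\mu:=\mu^1-\mu^2$ the bounded operator on $\ca(\AA^2)$ whose kernel is $\Cf$.

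The key intermediate step is to show that $T(K_0)$ is norm-closed in $\ca(\AA)$: if $T\gamma_n\to\nu$ in norm, Lemma~\ref{LEM:BOX-COMP} applied with $\psi=\Psi$ on $(J\times J,\AA^2)$ extracts a subsequence $(\gamma_{n_k})$ converging setwise to some $\gamma\in K_0$, and passing to marginals forces $T\gamma=\nu$. Now suppose, for contradiction, that $\rho\notin T(K_0)$. Hahn--Banach separation supplies a bounded linear functional $\ell$ on $\ca(\AA)$ with $\ell(\rho)<\inf_{\gamma\in K_0}\ell(T\gamma)$. Since $\ell\circ T$ is a continuous linear functional on $\ca(\AA^2)$ vanishing on $\Cf$, the representation \eqref{EQ:LL-PSI} applied with the common reference measure $\Psi':=\Psi+|\varphi|$ yields a bounded potential $F(x,y)=f(x)-f(y)$ such that $(\ell\circ T)(\mu)=\mu(F)$ for every $\mu\ll\Psi'$. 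This representation applies both to $\mu=\varphi$ (giving $\ell(\rho)=\ell(-T\varphi)=-\varphi(F)$) and to each $\mu\in K_0$ (giving $\inf_{\gamma\in K_0}\gamma(F)=-\int F_-\,d\Psi$), so after rearranging the separation inequality becomes
\[
\int F_+\,d\varphi \;>\; \int F_-\,d\psi.
\]

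To close the argument, I would decompose $F$ along its level sets $A_t:=\{f\ge t\}$ as in \eqref{EQ:POTENT}: one has $F_+(x,y)=\int\one_{A_t\times A_t^c}(x,y)\,dt$ and $F_-(x,y)=\int\one_{A_t^c\times A_t}(x,y)\,dt$, so Fubini rewrites the inequality above as $\int_{-C}^{C}\bigl[\varphi(A_t\times A_t^c)-\psi(A_t^c\times A_t)\bigr]\,dt>0$, which forces some level $X=A_{t_0}$ to violate the hypothesized cut inequality $\varphi(X\times X^c)\le\psi(X^c\times X)$, the desired contradiction. The main obstacle I anticipate is arranging for a single potential $F$ to represent $\ell\circ T$ simultaneously on $\varphi$ and on every $\gamma\in K_0$; this dictates the choice of the common reference measure $\Psi'$ dominating both $|\varphi|$ and $\Psi$. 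A secondary but genuine technical point is the closedness of $T(K_0)$, without which Hahn--Banach would only yield non-strict separation and the argument would stall.
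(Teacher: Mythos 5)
Your proof is correct, and it uses the same essential toolkit as the paper (Lemma~\ref{LEM:BOX-COMP} for weak-$*$ compactness of measures dominated by a fixed $\Psi$, Hahn--Banach separation, the representation~\eqref{EQ:LL-PSI} of circulation-annihilating functionals by potentials, and the level-set decomposition~\eqref{EQ:POTENT}), but the argument is organized differently in a way that is worth noting. You establish norm-closedness of $T(K_0)$ first, which turns the separation step into a single clean application of Hahn--Banach: a point $\rho$ not in the closed convex set $T(K_0)\subseteq\ca(\AA)$ is strictly separated from it, and the contradiction then drops out directly. The paper instead works in $\ca(\AA^2)$ with the box $\Xf=\{\varphi\le\mu\le\psi\}$ and the subspace $\Cf$; it separates $\Cf$ from the open $\eps$-fattening $\Xf'$ to first prove the weaker statement $d_\tv(\Cf,\Xf)=0$, and only then invokes Lemma~\ref{LEM:BOX-COMP} a second time to extract an actual element of $\Cf\cap\Xf$ from a minimizing sequence. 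Your version thus consolidates the compactness argument into a single lemma (closedness of $T(K_0)$) rather than splitting it across two places, and it also dispenses with the paper's preliminary normalization to $0\le\varphi\le\psi$, since the common reference measure $\Psi'=\Psi+|\varphi|$ handles a signed $\varphi$ with no extra work. Both proofs end the same way: the potential $F=f(x)-f(y)$ produced by the representation is sliced along level sets $A_t=\{f\ge t\}$, and Fubini converts the strict inequality $\int F_+\,d\varphi>\int F_-\,d\psi$ into the existence of a level violating the cut condition. The only substantive point you flagged as a potential obstacle---that a single potential must represent $\ell\circ T$ on both $\varphi$ and all of $K_0$---you also correctly resolved by choosing $\Psi'$ to dominate both $|\varphi|$ and $\Psi$, so the argument is complete.
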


\begin{proof}
The necessity of the condition is trivial: if the circulation $\alpha$ exists,
then $\varphi(X\times X^c)\le \alpha(X\times X^c)=\alpha(X^c\times X) \le
\psi(X^c\times X)$.

To prove sufficiency, consider the set
$\Xf=\{\mu\in\ca(\AA^2):~\varphi\le\mu\le\psi\}$. We may assume (by adding a
sufficiently large circulation, say $|\varphi|+|\varphi|^*$) that
$0\le\varphi\le\psi$. We want to prove that $\Cf\cap\Xf\not=\emptyset$.

First, we prove the weaker fact that
\begin{equation}\label{EQ:WEAKER}
d_{\tv}(\Cf,\Xf)=0.
\end{equation}
Suppose that $c=d_{\tv}(\Cf,\Xf)>0$. Let $\Xf'=\{\mu\in\ca(\AA^2):
~d_\tv(\mu,\Xf)<c\}$, then $\Xf'$ is a convex open subset of $\ca(\AA^2)$.
Since $\Xf'\cap\Cf=\emptyset$, the Hahn--Banach Theorem implies that there is a
bounded linear functional $\LL$ on $\ca(\AA^2)$ such that $\LL(\mu)=0$ for all
$\mu\in \Cf$, and $\LL(\mu)<0$ for all $\mu$ in the interior of $\Xf'$, in
particular for every $\mu\in\Xf$.

The first condition on $\LL$ implies, by representation \eqref{EQ:LL-PSI}, that
there is a potential function $F(x,y)=g(x)-g(y)$ (with a bounded and measurable
function $g:~J\to\R$) such that $\LL(\mu)=\mu(F)$ for every $\mu\in\ca(\AA^2)$
such that $\mu\ll\psi$. Let $|g|\le C$.

Let $S=\{(x,y):~g(x)>g(y)\}$ and $A_t=\{x\in J:~g(x)\ge t\}$. Clearly
$A_t\times A_t^c\subseteq S$ and $A_t^c\times A_t\subseteq S^c$. We can write
\[
g(x) = \int\limits_{-C}^C \one_{A_t}(x)\,dt,
\]
then
\begin{equation}\label{EQ:LG3}
\LL(\mu) = \int\limits_{-C}^C \int\limits_{J\times J} \one_{A_t}(x)-\one_{A_t}(y)\,d\mu(x,y)\,dt =
\int\limits_{-C}^C \mu(A_t\times A_t^c)-\mu(A_t^c\times A_t)\,dt.
\end{equation}
Let us apply this formula with $\mu(X)=\varphi(X\cap S)+\psi(X\setminus S)$.
Then
\[
\LL(\mu) = \int\limits_{-C}^C \mu(A_t\times A_t^c)-\mu(A_t^c\times A_t)\,dt
=\int\limits_{-C}^C \psi(A_t\times A_t^c)-\varphi(A_t^c\times A_t)\,dt \ge 0
\]
by hypothesis. On the other hand, we have $\varphi\le\mu\le\psi$, so
$\mu\in\Xf$, so $\LL(\mu)<0$. This contradiction proves \eqref{EQ:WEAKER}.

To conclude, we select circulations $\alpha_n\in\Cf$ and measures
$\beta_n\in\Xf$ such that $\|\alpha_n-\beta_n\|\to0$ ($n\to\infty$). By Lemma
\ref{LEM:BOX-COMP}, there is a measure $\beta\in\Xf$ such that
$\beta_n(S)\to\beta(S)$ ($n\to\infty$) for all $S\in\AA^2$ and an appropriate
subsequence of the indices $n$. Hence
\[
|\alpha_n(S)-\beta(S)|\le|\alpha_n(S)-\beta_n(S)|+|\beta_n(S)-\beta(S)|\le
\|\alpha_n-\beta_n\|+|\beta_n(S)-\beta(S)|\to0.
\]
In particular, for every $A\in\AA$ we have
\[
0=\alpha_n(A\times A^c)-\alpha_n(A^c\times A)\to\beta(A\times A^c)-\beta(A^c\times A),
\]
and so $\beta$ is a circulation, and by a similar argument, $\beta\in\Xf$.
\end{proof}

\begin{remark}\label{REM:MEAS2FNC}
As long as we restrict our attention to circulations $\alpha$ that are
absolutely continuous with respect to a given measure $\psi\in\ca_+(\AA^2)$, we
can define them as functions, considering the Radon--Nikodym derivative
$f=d\alpha/d\psi$. Then $f$ is a $\psi$-integrable function satisfying
\[
\int\limits_{A\times A^c} f\,d\psi = \int\limits_{A^c\times A} f\,d\psi
\]
for all $A\in\AA$. The value $f(x,y)$ can be interpreted as the flow value on
the edge $xy$. The marginals of $\alpha$, meaning the flow in and out of a
point, could also be defined using a disintegration of $\psi$. However, this
definition of circulation would depend on the measure $\psi$, while our
definition above does not depend on any such parameter.

Similar remarks apply to notions like flows below, and will not be repeated.
\end{remark}

\subsubsection{Optimal circulations}

If a feasible circulation exists, we may be interested in finding a feasible
circulation $\mu$ which minimizes a ``cost'', or maximizes a ``value''
$\mu(v)$, given by a bounded measurable function $v$ on $J\times J$.
Equivalently, we want to characterize when a value of $1$ (say) can be
achieved. This cannot be characterized in terms of cut conditions any more, but
an elegant necessary and sufficient condition can still be formulated.

\begin{theorem}\label{THM:MAX-CIRC}
Given a bounded measurable function $v:~J\times J\to \R_+$ and measures
$\varphi,\psi\in\ca_+(\AA^2)$, $\varphi\le\psi$, there is a circulation
$\alpha$ with $\varphi\le \alpha\le\psi$ and $\alpha(v)=c$ if and only if the
following three conditions are satisfied for every potential $F$:
\begin{align}
&\psi(|F+v|_+) \ge \varphi(|F+v|_-) + c,\label{JJFB1}\\
&\psi(|F-v|_+) \ge \varphi(|F-v|_-) - c,\label{JJFB2}\\
&\psi(|F|_+) \ge \varphi(|F|_-)\label{JJFB3}
\end{align}
\end{theorem}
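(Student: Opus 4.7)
The plan is to parallel the proof of Theorem~\ref{THM:HOFF-CIRC-M}, with the feasibility set enlarged to incorporate the value constraint. Set $\Xf = \{\mu\in\ca(\AA^2) : \varphi\le\mu\le\psi,\ \mu(v)=c\}$; the theorem then amounts to $\Cf\cap\Xf\ne\emptyset$. Necessity is immediate: for any $\alpha\in\Cf\cap\Xf$ and any potential $F$, Lemma~\ref{LEM:CIRC-POT} gives $\alpha(F)=0$, whence $\alpha(F\pm v)=\pm c$; writing each $\alpha(G)=\alpha(G_+)-\alpha(G_-)$ with $\alpha(G_+)\le\psi(G_+)$ and $\alpha(G_-)\ge\varphi(G_-)$, applied to $G\in\{F,F+v,F-v\}$, yields the three inequalities.

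For sufficiency, I would first reduce to $0\le\varphi\le\psi$ by adding the nonnegative circulation $\sigma=|\varphi|+|\varphi|^*$ to both $\varphi$ and $\psi$ and replacing $c$ by $c+\sigma(v)$; since $\sigma\in\Cf$, Lemma~\ref{LEM:CIRC-POT} ensures all three hypotheses transform correctly. Setting $F=0$ in \eqref{JJFB1} and \eqref{JJFB2} yields $\varphi(v)\le c\le\psi(v)$, so a suitable convex combination of $\varphi$ and $\psi$ witnesses $\Xf\ne\emptyset$. Assume for contradiction that $c_0:=d_\tv(\Cf,\Xf)>0$. Separating $\Cf$ from the open $c_0$-thickening of $\Xf$ by Hahn--Banach, then applying Lemma~\ref{LEM:CIRC-DUAL} and Proposition~\ref{PROP:MEAS-REP} (exactly as in the display following Lemma~\ref{LEM:CIRC-DUAL}), I obtain a bounded potential $F$ such that the separating functional satisfies $\LL(\mu)=\mu(F)<0$ for every $\mu\in\Xf$.

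The heart of the argument is to produce a $\mu_*\in\Xf$ with $\mu_*(F)\ge 0$. For each $\alpha\in\R$ and $t\in[0,1]$, set
\[
\mu_{\alpha,t} \;=\; \psi\cdot\one_{\{F+\alpha v>0\}} + \bigl(t\psi+(1-t)\varphi\bigr)\cdot\one_{\{F+\alpha v=0\}} + \varphi\cdot\one_{\{F+\alpha v<0\}},
\]
so that $\varphi\le\mu_{\alpha,t}\le\psi$. Because $F+\alpha v$ vanishes on the middle set, $\mu_{\alpha,t}(F+\alpha v)=\psi((F+\alpha v)_+)-\varphi((F+\alpha v)_-)$, independent of $t$. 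Applying \eqref{JJFB3} to $F$ when $\alpha=0$, \eqref{JJFB1} to the potential $F/\alpha$ when $\alpha>0$, and \eqref{JJFB2} to the potential $F/|\alpha|$ when $\alpha<0$---in the latter two cases clearing the positive factor $|\alpha|$ out of $(\cdot)_\pm$---I obtain $\mu_{\alpha,t}(F+\alpha v)\ge\alpha c$, equivalently $\mu_{\alpha,t}(F)\ge\alpha(c-\mu_{\alpha,t}(v))$. A dominated-convergence computation shows $\mu_{\alpha,0}(v)$ is nondecreasing and left-continuous in $\alpha$, with limiting values $\varphi(v)$ at $-\infty$ and $\psi(v)$ at $+\infty$, while varying $t\in[0,1]$ at fixed $\alpha$ fills the interval $[\mu_{\alpha,0}(v),\mu_{\alpha,1}(v)]$, absorbing any right-jump. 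Hence $(\alpha,t)\mapsto\mu_{\alpha,t}(v)$ covers $[\varphi(v),\psi(v)]\ni c$, so some $(\alpha^*,t^*)$ gives $\mu_{\alpha^*,t^*}(v)=c$; then $\mu_{\alpha^*,t^*}(F)\ge 0$, contradicting $\LL<0$ on $\Xf$.

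Having thus established $d_\tv(\Cf,\Xf)=0$, a compactness argument via Lemma~\ref{LEM:BOX-COMP} (using the bounded-function form of convergence noted after that lemma to preserve $\mu(v)=c$ in the limit) extracts an element of $\Cf\cap\Xf$, completing the proof. The step I expect to be most delicate is the case analysis establishing $\mu_{\alpha,t}(F+\alpha v)\ge\alpha c$: after rewriting $(F+\alpha v)_\pm=|\alpha|(G\pm v)_\pm$ for $G=F/|\alpha|$, one must invoke the correct hypothesis (\eqref{JJFB1} for $\alpha>0$, \eqref{JJFB2} for $\alpha<0$) with exactly the right sign conventions so that multiplying back by the positive scalar $|\alpha|$ restores the inequality in terms of $F+\alpha v$.
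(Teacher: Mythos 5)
Your proof takes a genuinely different route from the paper's. The paper separates \emph{three} convex sets ($\Cf$, the hyperplane $\{\mu:\mu(v)=c\}$, and the box $\{\varphi\le\mu\le\psi\}$) via Lemma~\ref{LEM:HB-KN}, identifies the hyperplane's functional as $b\,\mu(v)$ with $b\in\{-1,0,1\}$, and contradicts whichever of \eqref{JJFB1}--\eqref{JJFB3} corresponds to $b$ by plugging in the extremal $\mu=\psi_U+\varphi_{U^c}$ with $U=\{F+bv\ge0\}$. You fold the value constraint into $\Xf$, separate only two sets (so ordinary Hahn--Banach suffices), and recover the role of $b$ through the continuous parameter $\alpha$ together with an intermediate-value argument. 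This avoids Lemma~\ref{LEM:HB-KN} at the cost of the monotonicity/left-continuity analysis of $\alpha\mapsto\mu_{\alpha,0}(v)$, which is indeed the delicate part of your argument. Both approaches are sound in spirit; the paper's makes the discrete case split on $b$ do the work that your one-parameter family does continuously.

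There is one genuine gap: the claim that $(\alpha,t)\mapsto\mu_{\alpha,t}(v)$ covers the \emph{closed} interval $[\varphi(v),\psi(v)]$. The endpoints arise only as limits as $\alpha\to\mp\infty$ and need not be attained for any finite $(\alpha,t)$ --- for instance when $\psi-\varphi$ has positive mass on every set $\{0<v<\eps\}$, the set $\{F+\alpha v\le0,\,v>0\}$ retains positive $(\psi-\varphi)$-measure for all $\alpha$, so $\mu_{\alpha,1}(v)<\psi(v)$ for every $\alpha$. If $c$ equals an endpoint, your construction produces no witness $\mu_*\in\Xf$. The fix is short: say $c=\psi(v)$, take $\alpha_n\to+\infty$ and set $\mu_n:=\mu_{\alpha_n,1}$; then $\varphi\le\mu_n\le\psi$, $\mu_n(v)\uparrow c$, and $\mu_n(F)\ge\alpha_n\bigl(c-\mu_n(v)\bigr)\ge0$. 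Lemma~\ref{LEM:BOX-COMP}, together with the bounded-measurable-function remark following it, yields a subsequential setwise limit $\mu_\infty$ with $\varphi\le\mu_\infty\le\psi$, $\mu_\infty(v)=c$, and $\mu_\infty(F)\ge0$, restoring the contradiction (the case $c=\varphi(v)$ is symmetric with $\alpha_n\to-\infty$, $t=0$). With this patched, the rest of your proof --- the reduction via $|\varphi|+|\varphi|^*$, the rescaling to $F/|\alpha|$ before invoking \eqref{JJFB1}/\eqref{JJFB2}, and the closing compactness step --- is correct.
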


Condition \eqref{JJFB3} is equivalent to the condition given for the existence
of a circulation in Theorem \ref{THM:HOFF-CIRC-M}, which is obtained when
$F(x,y)=\one_X(x)-\one_X(y)$. If $\varphi=0$, then only \eqref{JJFB1} is
nontrivial. Applying the conditions with $F=0$ we get that $\varphi(v)\le c\le
\psi(v)$.

\begin{proof}
We may assume that $c=1$. The necessity of the condition is trivial: if such a
circulation $\alpha$ exists, then
\[
\psi(|F+v|_+) - \varphi(|F+v|_-) \ge \alpha(|F+v|_+) - \alpha(|F+v|_-)= \alpha(F+v) = \alpha(v) = 1,
\]
and similar calculation proves the other two conditions.

To prove the converse, we proceed along similar lines as in the proof of
Theorem \ref{THM:HOFF-CIRC-M}. Consider the subspace $\Cf\subseteq \ca(\AA^2)$
of circulations, the affine hyperplane $\Hf=\{\alpha\in\ca(\AA^2):~
\alpha(v)=1\}$ and the ``box'' $\Xf=\{\alpha\in\ca(\AA^2):~ \varphi\le\alpha\le
\psi\}$. We want to prove that $\Cf\cap\Hf\cap\Xf\not=\emptyset$.

Clearly the sets $\Cf$, $\Hf$ and $\Xf$ are nonempty. Fix an $\eps>0$, and
replace them by their $\eps$-neighborhoods
$\Cf'=\{\mu\in\ca(\AA^2):~d_\tv(\mu,\Cf)<\eps\}$ etc. We start with proving the
weaker statement that
\begin{equation}\label{EQ:CHX-WEAKER}
\Cf'\cap \Hf'\cap \Xf'\not=\emptyset.
\end{equation}
Suppose not. Then Lemma \ref{LEM:HB-KN} implies that there are bounded linear
functionals $\LL_1,\LL_2,\LL_3$ on $\ca(\AA^2)$, not all zero, and real numbers
$a_1,a_2,a_3$ such that $\LL_1+\LL_2+\LL_3=0$, $a_1+a_2+a_3=0$, and
$\LL_i(\mu)\ge a_i$ for all $\mu\in \Cf'$, $\Hf'$ and $\Xf'$, respectively, and
$\LL_i(\mu)>a_i$ for at least one $i$.

The functional $\LL_1$ remains bounded from below for every circulation
$\alpha\in\Cf$, and since $\Cf$ is a linear subspace, this implies that
\begin{equation}\label{EQ:LL1}
\LL_1(\alpha)=0 \qquad(\alpha\in\Cf).
\end{equation}
By a similar reasoning, $\LL_2$ must be a constant $b$ on the hyperplane $\Hf$;
we may scale $\LL_1$, $\LL2$ and $\LL_3$ so that $b\in\{-1,0,1\}$. It is easy
to see that this implies the more general formula
\begin{equation}\label{EQ:LL2MU}
\LL_2(\mu) = b\mu(v)\qquad(\mu\in\ca(\AA^2)),
\end{equation}
Finally, we can express $\LL_3$ as
\begin{equation}\label{EQ:LL3}
\LL_3(\mu)=-\LL_1(\mu)-\LL_2(\mu)\quad(\mu\in\ca(A^2)).
\end{equation}
Using the representation \eqref{EQ:LL-PSI}, we can write
\begin{equation}\label{EQ:LL1MU}
\LL_1(\mu) = \mu(F)\qquad(0\le \mu\le\psi)
\end{equation}
with some potential $F$ on $J\times J$. Hence
\[
\LL_3(\mu)=-\mu(F)-b\mu(v)=-\mu(F+bv) \qquad(0\le \mu\le \psi).
\]
We also know that for any $\alpha\in\Cf$, $\nu\in\Hf$ and $\mu\in\Xf$, we have
\[
0=a_1+a_2+a_3<\LL_1(\alpha)+\LL_2(\nu)+\LL_3(\mu)
= 0+b+\LL_3(\mu) = b-\mu(F+bv),
\]
and hence $\mu(F+bv)<b$ for all $\mu\in\Xf$.

The tightest choice for $\mu\in\Xf$ is $\mu=\psi_U-\varphi_{U^c}$, where
$U=\{(x,y):~F(x,y)+bv(x,y)\ge 0\}$. This gives that
\[
\psi(|F+bv|_+) - \varphi(|F+bv|_-) = \psi_U(F+bv) - \varphi_{U^c}(F+bv) = \mu(F+bv) <b.
\]
This contradicts one of the conditions in the theorem (depending on $b$). This
proves \eqref{EQ:CHX-WEAKER}.

To prove the stronger statement that $\Cf\cap\Hf\cap\Xf\not=\emptyset$,
\eqref{EQ:CHX-WEAKER} implies that there are sequences of measures $\alpha_n\in
\Cf$, $\nu_n\in\Hf$ and $\mu_n\in\Xf$ such that $d_\tv(\mu_n,\alpha_n)\to0$ and
$d_\tv(\mu_n,\nu_n)\to0$. Furthermore, since $0\le\mu_n\le\psi$, Lemma
\ref{LEM:BOX-COMP} applies, and so there is a measure $\mu\in\Xf$ such that for
an appropriate infinite subsequence of indices, $\mu_n(U)\to\mu(U)$ for all
$U\in\AA^2$. This implies that $\alpha_n(U)\to\mu(U)$ and $\nu_n(U)\to\mu(U)$
for this subsequence.

Thus
\[
\mu(A\times A^c) = \lim_{n\to\infty} \alpha_n(A\times A^c) =
\lim_{n\to\infty} \alpha_n(A^c\times A) = \mu(A^c\times A)
\]
for every $A\in\AA$, so $\mu\in\Cf$. Similarly, by Lemma \ref{LEM:BOX-COMP}
$\mu(v) = \lim_{n\to\infty}\nu_n(v) =1$, whence $\mu\in\Hf$.
\end{proof}

A straightforward application of Theorem \ref{THM:MAX-CIRC} allows us to answer
a question about the existence of Markov spaces, where an upper bound on the
ergodic circulation is prescribed.

\begin{corollary}\label{COR:ERG-FLOW-M}
Given a measure $\psi\in\ca_+(\AA^2)$, there exists an ergodic circulation
$\eta$ such that $\eta\le \psi$ if and only if every potential $F:~J\times
J\to\R$ satisfies
\[
\psi(|1+F|_+) \ge 1.
\]
\end{corollary}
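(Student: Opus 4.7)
The plan is to derive this corollary as an immediate specialization of Theorem \ref{THM:MAX-CIRC}. An ergodic circulation is by definition a probability measure on $\AA^2$ with equal marginals, so $\eta \le \psi$ being an ergodic circulation means exactly: $\eta \in \Cf$, $0 \le \eta \le \psi$, and $\eta(J \times J) = 1$. The last condition can be rewritten as $\eta(v) = 1$ for the constant function $v \equiv 1$. Thus I would apply Theorem \ref{THM:MAX-CIRC} with the specific choices $\varphi = 0$, $\psi$ as given, $v \equiv 1$ (which is bounded, measurable and nonnegative, as required), and $c = 1$.

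With these choices, I would then check that of the three conditions \eqref{JJFB1}--\eqref{JJFB3}, two are automatic and only one gives the stated inequality. Since $\varphi = 0$, condition \eqref{JJFB2} reads $\psi(|F-1|_+) \ge -1$, which holds trivially because the left side is nonnegative; condition \eqref{JJFB3} reads $\psi(|F|_+) \ge 0$, again trivial. Condition \eqref{JJFB1} reads $\psi(|F+1|_+) \ge 1$, which is the stated criterion (after using $|F+1|_+ = |1+F|_+$, and noting that $F$ ranges over all potentials iff $-F$ does, so replacing $F$ by $-F$ if one prefers does not alter the family of inequalities).

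No significant obstacle is expected: the corollary is a pure specialization, and the only tiny wrinkle is bookkeeping the sign conventions in \eqref{JJFB1}--\eqref{JJFB3} with $\varphi = 0$ so as to confirm that \eqref{JJFB2} and \eqref{JJFB3} are vacuous. Both the ``if'' and ``only if'' directions follow simultaneously from Theorem \ref{THM:MAX-CIRC}, so no separate argument is needed.
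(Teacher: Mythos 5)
Your proposal is correct and is exactly the specialization the paper intends: take $\varphi=0$, $v\equiv 1$, $c=1$ in Theorem~\ref{THM:MAX-CIRC}, observe that an ergodic circulation bounded by $\psi$ is precisely a circulation $\alpha$ with $0\le\alpha\le\psi$ and $\alpha(\mathbf{1})=1$, and check that conditions \eqref{JJFB2} and \eqref{JJFB3} become vacuous while \eqref{JJFB1} reduces to $\psi(|1+F|_+)\ge 1$. This matches the paper's own remark that for $\varphi=0$ only \eqref{JJFB1} is nontrivial, so no further comment is needed.
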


\subsubsection{Integrality}\label{SSEC:INTEGER}

In the case when $v\equiv 1$ and $\varphi\equiv 0$, the condition in Corollary
\ref{COR:ERG-FLOW-M} implies that
\[
\psi(A\times A^c)-\psi(A^c\times A)\le \psi(J\times J)-1 \qquad (A\in\AA).
\]
One may wonder whether, at least in this special case, such a cut condition is
also sufficient in Corollary \ref{COR:ERG-FLOW-M}. This, however, fails even in
the finite case: on the directed path of length 2 where the edges have capacity
$1$, these cut conditions for the existence of an ergodic circulation are
satisfied, but the only feasible circulation is the $0$-circulation.

However, the following weaker requirement can be imposed on $F$:

\begin{suppl}\label{PROP:F-INT}
In Theorem \ref{THM:MAX-CIRC}, if the function $v$ has only integral values,
then it suffices to require condition \eqref{JJFB1}--\eqref{JJFB3} for
potentials $F$ having integral values.
\end{suppl}

This property of $F$ is clearly equivalent to saying that in the representation
$F(x,y)=f(x)-f(y)$, the function $f$ can be required to have integral values.
For finite graphs, this assertion follows easily from the fact that the matrix
of flow conditions is totally unimodular. In the infinite case, we have to use
another proof.

\begin{proof}
Suppose that there is a potential $F(x,y)=f(x)-f(y)$ violating (say)
\eqref{JJFB1}. Let $S=\{(x,y):~F(x,y)+v(x,y)>0\}$. Consider the modified
potentials $\wh{F}=\lfloor f(y)\rfloor-\lfloor f(y)\rfloor$ and $\wt{F}=\langle
f(x)\rangle -\langle f(y)\rangle$, where $\langle t\rangle = t-\lfloor
t\rfloor$ is the fractional part of the real number $t$. We claim that
\begin{align}\label{EQ:FRACF}
\psi(|F+v|_+) - \varphi(|F+v|_-) = \psi(|\wh{F}+v|_+) - \varphi(|\wh{F}+v|_-)
+ \psi_S(\wt{F}) + \varphi_{S^c}(\wt{F}).
\end{align}
Indeed, note that for $(x,y)\in S$ we have $\wh{F}(x,y)+v(x,y)\ge 0$, and for
$(x,y)\notin S$ we have $\wh{F}(x,y)+v(x,y)\le 0$. Hence
\begin{align*}
\psi(|F+v|_+)= \psi_S(F+v) = \psi_S(\wh{F}+v) +  \psi_S(\wt{F})
&= \psi(|\wh{F}+v|_+) + \psi_S(\wt{F}).
\end{align*}
Similarly,
\begin{align*}
\varphi(|F+v|_-) =  \varphi(|\wh{F}+v|_-) - \varphi_{S^c}(\wt{F}).
\end{align*}
This proves \eqref{EQ:FRACF}.

Replacing $f$ by $f+a$ with any real constant $a$, the potential $F$ and the
set $S$ do not change, but the potentials $\wh{F}_a(x,y)=\lfloor
f(x)+a\rfloor-\lfloor f(y)+a\rfloor$ and $\wt{F}_a(x,y) =  \langle
f(x)+a\rangle -\langle f(y)+a\rangle$ do depend on $c$. We have
\begin{align*}
\psi(|F+v|_+) - \varphi(|F+v|_-) = \psi(|\wh{F}_a+v|_+)- \varphi(|\wh{F}_a+v|_-)
+\psi_S(\wt{F}_a)-\varphi_{S^c}(\wt{F}_a).
\end{align*}
Choosing $a$ randomly and uniformly from $[0,1]$, the expectation of the last
two terms is $0$, since $\E(\langle f(x)+a\rangle) = 1/2$ for any $x$, and so
$\E(\wt{F}_a(x,y)) =0$ for all $x$ and $y$. Thus
\begin{align*}
\psi(|F+v|_+) - \varphi(|F+v|_-) =\E\bigl(\psi(|\wh{F}_a+v|_+) - \varphi(|\wh{F}_a+v|_-)\bigr).
\end{align*}
This implies that there is an $a\in[0,1]$ for which
\begin{align*}
\psi(|F+v|_+) - \varphi(|F+v|_-) \ge \psi(|\wh{F}_a+v|_+) - \varphi(|\wh{F}_a+v|_-).
\end{align*}
So replacing $f$ by $\lfloor f+a\rfloor$, we get an integer valued potential
that violates condition \eqref{JJFB1} even more, which proves the Supplement.
\end{proof}

We can give a more combinatorial reformulation of Corollary
\ref{COR:ERG-FLOW-M}.

\begin{corollary}\label{COR:F-INT}
Given a measure $\psi\in\ca_+(\AA^2)$, there exists an ergodic circulation
$\eta$ such that $\eta\le \psi$ if and only if for every partition
$J=S_1\cup\dots\cup S_k$ into a finite number of Borel sets
\[
\sum_{1\le i\le j\le k} (j-i+1) \psi(S_j\times S_i) \ge 1.
\]
\end{corollary}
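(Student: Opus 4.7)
The plan is to derive Corollary \ref{COR:F-INT} from Corollary \ref{COR:ERG-FLOW-M} by combining the integrality Supplement \ref{PROP:F-INT} with a dictionary between bounded integer-valued potentials and finite Borel partitions. Since Corollary \ref{COR:ERG-FLOW-M} is the specialization of Theorem \ref{THM:MAX-CIRC} to $v\equiv 1$, $\varphi = 0$, $c = 1$, the integrality Supplement applies, so an ergodic circulation $\eta\le\psi$ exists if and only if $\psi(|1+F|_+)\ge 1$ for every \emph{integer}-valued potential $F(x,y)=f(x)-f(y)$. Moreover, the representing function $f$ may be taken bounded (the potential in \eqref{EQ:LL-PSI} comes from a bounded measurable function via Proposition \ref{PROP:MEAS-REP}), so its range is a finite set $\{n_1<\dots<n_k\}\subset\Z$.

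Given such an $f$, I would set $T_i=f^{-1}(n_i)$ to get a finite Borel partition of $J$. For $(x,y)\in T_j\times T_i$ one has $1+F(x,y)=n_j-n_i+1$, which is positive exactly when $i\le j$. Hence
\[
\psi(|1+F|_+)=\sum_{1\le i\le j\le k}(n_j-n_i+1)\,\psi(T_j\times T_i).
\]
The Supplement-based characterization therefore becomes: for every finite Borel partition $\{T_i\}$ and every strictly increasing integer sequence $n_1<\dots<n_k$, the above sum is at least $1$.

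It remains to check this ``general-levels'' condition is equivalent to the consecutive case $n_i=i$ appearing in the statement. One direction is immediate specialization. For the converse, given arbitrary $n_1<\dots<n_k$ and a partition $\{T_i\}$, refine it to a partition $\{S_j:1\le j\le n_k-n_1+1\}$ by setting $S_{n_i-n_1+1}=T_i$ and declaring the remaining $S_j$ empty. Applying the consecutive-case inequality to $\{S_j\}$ kills every term that involves an empty part, and for the surviving pairs $(j_1,j_2)=(n_{i_1}-n_1+1,\,n_{i_2}-n_1+1)$ the coefficient $j_2-j_1+1$ equals $n_{i_2}-n_{i_1}+1$; thus the general inequality is recovered.

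The real work is packaged inside Corollary \ref{COR:ERG-FLOW-M} and Supplement \ref{PROP:F-INT}, so I do not anticipate a genuine obstacle; the main care points are only the index bookkeeping in the computation of $\psi(|1+F|_+)$ and the insertion-of-empty-parts step that removes the need to consider non-consecutive integer levels.
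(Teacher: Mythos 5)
Your proof is correct and follows essentially the same route as the paper: specialize Theorem \ref{THM:MAX-CIRC} to $v\equiv1$, $\varphi=0$, $c=1$, invoke Supplement \ref{PROP:F-INT} to restrict to integer-valued potentials, and translate such potentials into finite Borel partitions by taking level sets of the representing function $f$. The paper compresses your two clean-up steps into the single phrase ``we may assume that $f$ is integral valued and $1\le f\le k$''; you have unpacked that phrase into (i) the observation that $f$ is bounded hence has finite range, and (ii) the reduction from arbitrary integer levels $n_1<\dots<n_k$ to consecutive levels by inserting empty parts. Both are correct, and (ii) in particular can also be seen without padding: since the $n_i$ are strictly increasing integers one has $n_b-n_a+1\ge b-a+1$, so the consecutive-level sum is dominated by the general-level sum. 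One small notational point: your convention $F(x,y)=f(x)-f(y)$ lands you exactly on $\psi(S_j\times S_i)$ as in the statement, whereas the paper's proof uses $F(x,y)=f(y)-f(x)$ and produces $\psi(S_i\times S_j)$; the two formulations are equivalent since the condition is quantified over all orderings of the parts, so this is merely a harmless discrepancy in the paper's own writeup.
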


The (insufficient) cut condition discussed above corresponds to the case when
$k=2$.

\begin{proof}
Let $F(x,y)=f(y)-f(x)$ be a bounded integral valued potential. We may assume
that $f$ is integral valued and $1\le f\le k$ for some integer $k$. Then the
sets $S_i=\{x\in J:~f(x)=i\}$ $(i=1,\dots,k)$ form a partition of $J$. For
$x\in S_i$ and $y\in S_j$, we have
\[
|F(x,y)+1|_+ =
  \begin{cases}
    j-i+1, & \text{if $i\le j$}, \\
    0, & \text{otherwise}.
  \end{cases}
\]
Thus the condition in Corollary \ref{COR:ERG-FLOW-M} is equivalent to the
condition in Corollary \ref{COR:F-INT}.
\end{proof}

\subsection{Flows}

Let $\sigma,\tau\in\ca(\AA)$ be two measures with $\sigma(J)=\tau(J)$. We
consider $\sigma$ the ``supply'' and $\tau$, the ``demand''. We call a measure
$\varphi\in\ca_+(\AA^2)$ a {\it flow from $\sigma$ to $\tau$}, or briefly a
{\it $\sigma$-$\tau$ flow}, if $\varphi^1-\varphi^2=\sigma-\tau$. We may
assume, if convenient, that the supports of $\sigma$ and $\tau$ are disjoint,
since subtracting $\sigma\land\tau$ from both does not change their difference.
If this is the case, we call $\sigma(J)=\tau(J)$ the {\it value} of the flow.

Given two points $s,t\in J$, a measure $\varphi$ on $\AA^2$ such that
$\varphi^1-\varphi^2 = a(\delta_s-\delta_t)$ will be called an {\it $s$-$t$
flow of value $a$}. So $\varphi$ is a flow serving supply $a\delta_s$ and
demand $a\delta_t$.

Note that every measure $\varphi\in\ca_+(\AA^2)$ is a flow from $\varphi^1$ to
$\varphi^2$, and also a flow from $\varphi^1\setminus\varphi^2$ to
$\varphi^2\setminus\varphi^1$. But we are usually interested in starting with
the supply and the demand, and constructing appropriate flows. We may require
$\varphi$ to be acyclic, since subtracting a circulation does not change
$\varphi^1-\varphi^2$.

As before, we may also be given a nonnegative measure $\psi$ on $\AA^2$ (the
``edge capacity''). We call a flow $\varphi$ {\it feasible}, if
$\varphi\le\psi$.

\subsubsection{Max-Flow-Min-Cut and Supply-Demand}

These fundamental theorems follow from the results on circulations by the same
tricks as in the finite case.

\begin{theorem}[Max-Flow-Min-Cut]\label{THM:MFMC-M}
Given a capacity measure $\psi\in\ca_+(\AA^2)$ and two points $s,t\in J$, there
is a feasible $s$-$t$ flow of value $1$ if and only if $\psi(A\times A^c)\ge 1$
for every $A\in\AA$ with $s\in A$ and $t\notin A$.
\end{theorem}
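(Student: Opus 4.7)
The plan is to reduce the statement to Hoffman's Circulation Theorem (Theorem \ref{THM:HOFF-CIRC-M}) via the classical trick of closing the $s$-$t$ path into a cycle with a ``return arc.'' Concretely, let $\delta_{(t,s)}\in\ca_+(\AA^2)$ denote the Dirac measure at the point $(t,s)\in J\times J$; this is a valid measure since $(J,\AA)$ is a standard Borel space and so $\{(t,s)\}=\{t\}\times\{s\}\in\AA^2$. Set
\[
\tilde\varphi=\delta_{(t,s)},\qquad \tilde\psi=\psi+\delta_{(t,s)}.
\]

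The first step is to verify that $\varphi\mapsto \alpha:=\varphi+\delta_{(t,s)}$ is a bijection between feasible $s$-$t$ flows of value $1$ and circulations $\alpha\in\ca(\AA^2)$ with $\tilde\varphi\le\alpha\le\tilde\psi$. The upper and lower bounds on $\alpha$ translate directly to $0\le\varphi\le\psi$, and the marginal computation
\[
\alpha^1-\alpha^2=(\varphi^1+\delta_t)-(\varphi^2+\delta_s)=(\varphi^1-\varphi^2)-(\delta_s-\delta_t)
\]
shows that $\alpha$ is a circulation precisely when $\varphi^1-\varphi^2=\delta_s-\delta_t$, i.e., when $\varphi$ is an $s$-$t$ flow of value $1$.

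The second step is to specialize the cut condition $\tilde\varphi(X\times X^c)\le\tilde\psi(X^c\times X)$ of Theorem \ref{THM:HOFF-CIRC-M} to this particular pair. The term $\tilde\varphi(X\times X^c)=\delta_{(t,s)}(X\times X^c)$ equals $1$ exactly when $t\in X$ and $s\in X^c$, and is $0$ otherwise; in the nontrivial case we also have $\delta_{(t,s)}(X^c\times X)=0$, so the inequality reduces to $\psi(X^c\times X)\ge 1$. Setting $A:=X^c$ recovers precisely the hypothesized cut condition for every $A\in\AA$ with $s\in A$ and $t\notin A$. Necessity is even more immediate: evaluating $\varphi^1-\varphi^2=\delta_s-\delta_t$ on any such $A$ gives $\varphi(A\times A^c)-\varphi(A^c\times A)=1$, whence $\psi(A\times A^c)\ge\varphi(A\times A^c)\ge 1$.

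Since all the genuine analytic work (Hahn--Banach separation, the compactness argument of Lemma \ref{LEM:BOX-COMP}) has already been absorbed into Theorem \ref{THM:HOFF-CIRC-M}, there is no essential obstacle here; the only thing requiring attention is the purely combinatorial bookkeeping of not swapping $X$ with $X^c$ when aligning the two versions of the cut condition.
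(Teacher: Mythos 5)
Your proof is correct and follows essentially the same route as the paper: close the $s$-$t$ flow into a circulation by adding the Dirac return arc $\delta_{(t,s)}$, apply Theorem~\ref{THM:HOFF-CIRC-M}, and observe that only the cuts with $t\in X$ and $s\in X^c$ give a nontrivial condition. Your write-up is in fact slightly more careful than the paper's (which has a $\delta_{st}$ vs.\ $\delta_{ts}$ typo in the sandwiching inequality), and you explicitly verify the marginal computation and the trivial cases of the cut condition.
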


\begin{proof}
For every feasible flow $\phi\le\psi$ of value $1$, the measure
$\phi+\delta_{ts}$ is a circulation such that $\delta_{st}\le
\phi+\delta_{st}\le \psi+\delta_{st}$. Conversely, for every circulation
$\alpha$ with $\delta_{ts}\le \alpha \le \psi+\delta_{st}$, the measure
$\alpha-\delta_{ts}$ is a feasible $s$-$t$ flow of value $1$. The conditions in
Theorem \ref{THM:HOFF-CIRC-M} on the existence of such a circulation are
trivial except for the second condition when $s\in A$ and $t\notin A$, which
gives the condition in the theorem.
\end{proof}

The more general Supply-Demand Theorem can be stated as follows.

\begin{theorem}\label{THM:SUP-DEM-MEAS}
Let $\psi\in\ca_+(\AA^2)$, and let $\sigma,\tau\in\ca_+(\AA)$ with
$\sigma(J)=\tau(J)$. Then there is a feasible $\sigma$-$\tau$ flow if and only
if $\psi(S\times S^c)\ge \sigma(S)-\tau(S)$ for every $S\in\AA$.
\end{theorem}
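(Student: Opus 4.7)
The plan is to reduce to Hoffman's Circulation Theorem (Theorem \ref{THM:HOFF-CIRC-M}) by the standard ``return-flow'' trick, generalized from the proof of Max-Flow-Min-Cut (Theorem \ref{THM:MFMC-M}). In the finite two-point case one adds an edge from $t$ back to $s$; in the supply--demand setting, I would add a nonnegative measure $\rho$ on $J\times J$ that routes the total demand back to the total supply, thereby converting the flow problem into a circulation problem.

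First dispatch the trivial case $\sigma(J)=0$ (then $\sigma=\tau=0$ and the zero flow works). Otherwise set
\[
\rho = \frac{1}{\sigma(J)}\,\tau\times\sigma \in\ca_+(\AA^2),
\]
which has $\rho^1=\tau$ and $\rho^2=\sigma$. The map $\varphi\mapsto \varphi+\rho$ sends feasible $\sigma$-$\tau$ flows $\varphi$ (with $0\le\varphi\le\psi$ and $\varphi^1-\varphi^2=\sigma-\tau$) to circulations $\alpha$ satisfying $\rho\le\alpha\le\psi+\rho$, since the marginal contributions $\sigma-\tau$ and $\tau-\sigma$ cancel; the inverse $\alpha\mapsto\alpha-\rho$ converts back. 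Thus a feasible $\sigma$-$\tau$ flow exists iff a circulation $\alpha$ exists with $\rho\le\alpha\le\psi+\rho$, which by Theorem \ref{THM:HOFF-CIRC-M} holds iff $\rho(X\times X^c)\le(\psi+\rho)(X^c\times X)$ for every $X\in\AA$.

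It remains to translate this inequality. Using the marginals of $\rho$,
\[
\rho(X\times X^c)-\rho(X^c\times X) = \rho^1(X)-\rho^2(X) = \tau(X)-\sigma(X),
\]
so the Hoffman condition reads $\tau(X)-\sigma(X)\le \psi(X^c\times X)$. Setting $S=X^c$ and using $\sigma(J)=\tau(J)$ to rewrite $\tau(S^c)-\sigma(S^c)=\sigma(S)-\tau(S)$, this becomes exactly $\psi(S\times S^c)\ge \sigma(S)-\tau(S)$ for every $S\in\AA$, as claimed. No genuine analytic obstacle arises: the Hahn--Banach argument and the compactness from Lemma \ref{LEM:BOX-COMP} are already absorbed into Theorem \ref{THM:HOFF-CIRC-M}, and the only care required is the bookkeeping of marginals, with the minor observation that any coupling of $\tau$ with $\sigma$ would serve equally well as $\rho$.
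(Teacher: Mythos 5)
Your proof is correct, and it takes a genuinely different route from the paper's. The paper proves this theorem by appending two new points $s,t$ to $J$, extending the sigma-algebra, defining an auxiliary capacity $\psi'$ that routes supply from $s$ and demand into $t$, and then invoking the Max-Flow-Min-Cut Theorem \ref{THM:MFMC-M} on the enlarged space. You instead stay on the original space and reduce directly to Hoffman's Circulation Theorem \ref{THM:HOFF-CIRC-M} by adding a return-flow measure $\rho$ with $\rho^1=\tau$, $\rho^2=\sigma$; the bijection $\varphi\mapsto\varphi+\rho$ between feasible $\sigma$-$\tau$ flows and circulations in $[\rho,\psi+\rho]$ is exact, and your translation of the Hoffman cut condition (using $\rho(X\times X^c)-\rho(X^c\times X)=\rho^1(X)-\rho^2(X)=\tau(X)-\sigma(X)$ and $\sigma(J)=\tau(J)$) lands precisely on the stated inequality. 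Your route buys a one-step reduction that avoids the mild technicality of extending the measurable space with atoms; the paper's route mirrors the classical source/sink construction more literally and has the advantage of reusing Theorem \ref{THM:MFMC-M} verbatim, whose proof is itself the same Hoffman reduction but in the two-point case. Both are standard reformulations of the same underlying trick, and as you note, any coupling of $\tau$ and $\sigma$ would do in place of the product measure.
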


\begin{proof}
We may assume that $\sigma(J)=\tau(J)=1$. Add two new points $s$ and $t$ to
$J$, and extend $\AA$ to a sigma-algebra $\AA'$ on $J'=J\cup\{s,t\}$ generated
by $\AA$, $\{s\}$ and $\{t\}$. Define a new capacity measure $\psi'$ by
\[
\psi'(X)=
  \begin{cases}
    \psi(X), & \text{if $X\subseteq J\times J$}, \\
    \sigma(Y), & \text{if $X= \{s\}\times Y$ with $Y\subseteq J$}, \\
    \tau(Y), & \text{if $X= Y\times \{t\}$ with $Y\subseteq J$},\\
    0, & \text{if $X\subseteq (\{t\}\times J) \cup (J\times \{s\}) \cup \{st,ts\}$},
  \end{cases}
\]
and extend it to all Borel sets by additivity. For every feasible
$\sigma$-$\tau$ flow $\phi$ on $(J,\AA)$, the measure $\phi+\psi'_{\{s\}\times
J}+\psi'_{J\times\{t\}}$ is a feasible $s$-$t$ flow of value $1$. Conversely,
for every feasible $s$-$t$ flow of value $1$, its restriction to the original
space $(J,\AA)$ is a feasible $\sigma$-$\tau$ flow. Applying the condition in
the Max-Flow-Min-Cut Theorem completes the proof.
\end{proof}

The measure-theoretic Max-Flow-Min-Cut Theorem is closely related to a result
of Laczkovich \cite{Lacz}, who works in the function setting. He also states an
integrality result, which is in a sense dual to our integrality result in
Section \ref{SSEC:INTEGER}.

A condition for the minimum cost of a feasible $\sigma$-$\tau$ flow of a given
value can be derived from Theorem \ref{THM:MAX-CIRC} using the same kind of
constructions as in the proof above. This gives the following result.

\begin{theorem}\label{THM:MIN-COST-FLOW}
Given a bounded measurable ``cost'' function $v:~J\times J\to \R_+$, a
``capacity'' measure $\psi\in\ca_+(\AA^2)$ and ``supply-demand'' measures
$\sigma,\tau\in\ca_+(\AA)$ with $\sigma(J)=\tau(J)$, there is a feasible
$\sigma$-$\tau$ flow $\varphi$ with $\varphi(v)=1$ if and only if
\begin{equation}\label{EQ:JJFVP2}
\psi(|f(y)-f(x)+bv(x,y)|_+) \ge \tau(f)-\sigma(f)+b
\end{equation}
for every bounded measurable function $f:~J\to\R$ and
$b\in\{-1,0,1\}$.\proofend
\end{theorem}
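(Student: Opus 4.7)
The plan is to reduce to the optimal-circulation result (Theorem~\ref{THM:MAX-CIRC}) by the same $s$-$t$ augmentation used in the proofs of Theorems~\ref{THM:MFMC-M} and~\ref{THM:SUP-DEM-MEAS}. Specifically, I would adjoin two new points $s,t$ to $J$ to form $J'=J\cup\{s,t\}$, extend $\AA$ to the sigma-algebra $\AA'$ generated by $\AA,\{s\},\{t\}$, and introduce bounds $\varphi_0\le\alpha\le\psi_0$ that force any feasible circulation $\alpha$ on $J'\times J'$ to restrict to a feasible $\sigma$-$\tau$ flow $\varphi=\alpha-\varphi_0$ on $(J,\AA)$.

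Concretely, take $\varphi_0 = \psi'_{\{s\}\times J} + \psi'_{J\times\{t\}} + \delta_{(t,s)}$ and $\psi_0 = \psi + \varphi_0$, where $\psi'(\{s\}\times Y)=\sigma(Y)$, $\psi'(Y\times\{t\})=\tau(Y)$ for $Y\in\AA$; and extend the cost by $v'=v$ on $J\times J$, $v'=0$ elsewhere. Since $\alpha(v')=\varphi(v)$, feasible $\sigma$-$\tau$ flows $\varphi$ with $\varphi(v)=1$ correspond bijectively to circulations $\alpha$ with $\varphi_0\le\alpha\le\psi_0$ and $\alpha(v')=1$. Theorem~\ref{THM:MAX-CIRC} applied with $c=1$ then asserts that such an $\alpha$ exists iff
\[
\psi_0(|F'+bv'|_+) \ge \varphi_0(|F'+bv'|_-) + b
\]
for every potential $F'$ on $J'\times J'$ and every $b\in\{-1,0,1\}$.

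The main work is translating this family of inequalities on $J'$ into the stated condition on $J$. Writing $F'(x,y)=h(x)-h(y)$, set $f=h|_J$, $r=h(s)$, $u=h(t)$. On $J\times J$ we have $\varphi_0=0$, $\psi_0=\psi$, $v'=v$, contributing $\psi(|f(x)-f(y)+bv|_+)$. On each of the three ``forced'' parts $\{s\}\times J$, $J\times\{t\}$, and $\{(t,s)\}$, one has $\varphi_0=\psi_0$ and $v'=0$, so the difference collapses to the signed integral of $F'$, yielding $r-\sigma(f)$, $\tau(f)-u$, and $u-r$ respectively; these sum to $\tau(f)-\sigma(f)$, with the auxiliary values $r,u$ cancelling cleanly. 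The MAX-CIRC condition therefore becomes
\[
\psi(|f(x)-f(y)+bv|_+) \ge \sigma(f)-\tau(f)+b,
\]
and substituting $f\mapsto -f$ (which merely relabels the universal quantifier over $f$) produces exactly \eqref{EQ:JJFVP2}.

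I expect the only real obstacle to be this sign bookkeeping: matching the convention of Theorem~\ref{THM:MAX-CIRC} (in which potentials are written $g(x)-g(y)$) with the form $f(y)-f(x)+bv(x,y)$ used in the statement, and checking that the $r,u$-cancellation on the three forced parts works uniformly for all $b\in\{-1,0,1\}$. Necessity is immediate from a one-line computation: for any feasible $\sigma$-$\tau$ flow $\varphi\le\psi$ with $\varphi(v)=1$, $\psi(|f(y)-f(x)+bv|_+) \ge \varphi(f(y)-f(x)+bv) = (\varphi^2-\varphi^1)(f) + b\varphi(v) = \tau(f)-\sigma(f)+b$, using $\varphi^1-\varphi^2=\sigma-\tau$.
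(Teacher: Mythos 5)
Your reduction is exactly the one the paper has in mind: the paper itself states that Theorem~\ref{THM:MIN-COST-FLOW} ``can be derived from Theorem~\ref{THM:MAX-CIRC} using the same kind of constructions as in the proof above,'' i.e.\ the $s$-$t$ augmentation of Theorems~\ref{THM:MFMC-M} and~\ref{THM:SUP-DEM-MEAS}, and you carry this out correctly, with the auxiliary potential values $r=h(s)$, $u=h(t)$ cancelling in the MAX-CIRC condition for every $b$. The one cosmetic point is that the Dirac term at $(t,s)$ in $\varphi_0$ and $\psi_0$ should carry the coefficient $\sigma(J)=\tau(J)$ when this common value is not $1$, so that the circulation marginals at $s$ and $t$ balance; this does not affect the cancellation or the final form of \eqref{EQ:JJFVP2}.
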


\subsubsection{Transshipment}

An optimization problem closely related to flows is the {\it transshipment
problem}. In its simplest measure-theoretic version, we are given two measures
$\alpha,\beta\in\ca(\AA)$ with $\alpha(J)=\beta(J)$. An {\it $\alpha$-$\beta$
transshipment} is a measure $\mu\in\ca_+(\AA\times\AA)$ coupling $\alpha$ and
$\beta$; in other words, $\mu^1=\alpha$ and $\mu^2=\beta$. Note the difference
with the notion of an $\alpha$-$\beta$ flow: there only the difference
$\mu^1-\mu^2$ is prescribed. In transhipment problems, one can think of
$J\times J$ as the edge set of a (complete) bipartite graph whose color classes
are the two copies of $J$. This observation can be used to derive the following
result from the Supply-Demand Theorem \ref{THM:SUP-DEM-MEAS}:

\begin{theorem}\label{THM:FEAS-TRANSSHIP}
Let $(J,\AA)$ be a standard Borel space, and $\alpha,\beta\in\ca_+(\AA)$ with
$\alpha(J)=\beta(J)$. Let $\psi\in\ca_+(\AA\times\AA)$. Then there exists an
$\alpha$-$\beta$ transshipment $\mu$ with $\mu\le\psi$ if and only if
\[
\psi(S\times T)\ge \alpha(S)+\beta(T)-\alpha(J)
\]
for every $S,T\in\AA$.\proofend
\end{theorem}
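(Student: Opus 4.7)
The plan is to reduce to the Supply-Demand Theorem \ref{THM:SUP-DEM-MEAS} by the standard bipartite doubling trick already hinted at in the paragraph preceding the statement. First I would construct a new standard Borel space $(J',\AA')$ as the disjoint union $J' = J_1 \cup J_2$ of two copies of $J$, transport $\alpha$ to a measure $\sigma$ on $(J',\AA')$ supported on $J_1$ and $\beta$ to a measure $\tau$ supported on $J_2$, and transport $\psi$ to a measure $\psi'$ on $\AA'\times\AA'$ supported on $J_1\times J_2$ via the natural identification of $J_1\times J_2$ with $J\times J$. Note that $\sigma(J')=\alpha(J)=\beta(J)=\tau(J')$, so the balance hypothesis of Theorem \ref{THM:SUP-DEM-MEAS} is met.

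Next I would set up the bijection between $\alpha$-$\beta$ transshipments $\mu\le\psi$ on $(J,\AA)$ and feasible $\sigma$-$\tau$ flows $\mu'\le\psi'$ on $(J',\AA')$. Any feasible $\mu'\le\psi'$ is forced to be concentrated on $J_1\times J_2$ (since $\psi'$ is), so $(\mu')^1$ lives on $J_1$ and $(\mu')^2$ lives on $J_2$; the flow condition $(\mu')^1-(\mu')^2=\sigma-\tau$ then forces $(\mu')^1=\sigma$ and $(\mu')^2=\tau$, which after pulling back to $J\times J$ is exactly the transshipment condition $\mu^1=\alpha$, $\mu^2=\beta$. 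The converse direction is routine: a transshipment $\mu\le\psi$ pushes forward to a flow $\mu'$ on $J_1\times J_2$ with the right marginals.

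Finally, applying Theorem \ref{THM:SUP-DEM-MEAS} the feasibility criterion reads $\psi'(S'\times (S')^c) \ge \sigma(S') -\tau(S')$ for every $S'\in\AA'$. Each such $S'$ decomposes as $S_1\cup T_2$ for unique $S,T\in\AA$, and since $\psi'$ is supported on $J_1\times J_2$ the criterion becomes
\[
\psi(S\times T^c) \ge \alpha(S)-\beta(T) \qquad (S,T\in\AA).
\]
Replacing $T$ by $T^c$ and using $\alpha(J)=\beta(J)$ rewrites this as the stated condition $\psi(S\times T)\ge \alpha(S)+\beta(T)-\alpha(J)$. The only delicate point—and it is mild—is ensuring that the disjoint-union construction genuinely produces a standard Borel space and that the pushforward of $\psi$ has the claimed support and marginal behavior; this is routine bookkeeping rather than a real obstacle.
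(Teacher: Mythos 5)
Your proof is correct and follows exactly the bipartite doubling approach hinted at in the paper (the paragraph preceding the theorem suggests viewing $J\times J$ as the edge set of a bipartite graph on two copies of $J$ and deriving the result from Theorem \ref{THM:SUP-DEM-MEAS}, which is precisely what you carry out). Your bookkeeping of the cut condition and the final algebraic rewrite are both right.
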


Suppose that every edge $(x,y)\in J\times J$ has a given cost $c(x,y)\ge 0$. We
want to find a transshipment minimizing the cost $\mu(c)$. We note that the
minimum is attained by Lemma \ref{LEM:COUPLE-CONV}.

\begin{theorem}\label{THM:MINCOST-TRANSSHIP}
Let $(J,\AA)$ be a standard Borel space, and $\alpha,\beta\in\ca_+(\AA)$ with
$\alpha(J)=\beta(J)$. Let $c:~J\times J\to\R_+$ be a bounded measurable
function. Then the minimum cost of an $\alpha$-$\beta$ transshipment is
$\sup_{g,h} \alpha(g)+\beta(h)$, where $g$ and $h$ range over all bounded
measurable functions $J\to\R$ satisfying $g(x)+h(y)\le c(x,y)$ for all $x,y\in
J$.
\end{theorem}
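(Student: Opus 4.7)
Weak duality is immediate: for any transshipment $\mu$ of $\alpha$ and $\beta$ and any admissible $g,h$ with $g(x)+h(y)\le c(x,y)$ pointwise,
\[
\alpha(g)+\beta(h)=\mu^1(g)+\mu^2(h)=\int\limits_{J\times J}\bigl(g(x)+h(y)\bigr)\,d\mu(x,y)\le\mu(c),
\]
giving $\sup\le\min$. That the infimum on the primal side is a minimum is the statement made in the paragraph preceding the theorem, relying on Lemma \ref{LEM:COUPLE-CONV}: a minimizing sequence of couplings has a subsequence converging setwise on rectangles, and $\mu_n(c)\to\mu^*(c)$ because $c$ is bounded measurable.

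For strong duality, I would argue by contradiction via Hahn--Banach, following the pattern of Theorem \ref{THM:MAX-CIRC}. Let $c^*=\min\mu(c)$ and suppose $s:=\sup(\alpha(g)+\beta(h))<c^*-3\eps$ for some $\eps>0$. In the Banach space $B:=\ca(\AA)\oplus\ca(\AA)\oplus\R$ introduce the convex cone
\[
\Gamma=\{(\mu^1,\mu^2,r):~\mu\in\ca_+(\AA^2),~r\ge\mu(c)\}.
\]
The point $P:=(\alpha,\beta,c^*-\eps)$ lies outside the closure of $\Gamma$: any sequence in $\Gamma$ converging to $P$ would, by Lemma \ref{LEM:COUPLE-CONV} applied after normalizing the marginals, yield a coupling of $\alpha,\beta$ with cost $\le c^*-\eps$, contradicting the definition of $c^*$. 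Geometric Hahn--Banach therefore produces a continuous linear functional $\Phi(\nu_1,\nu_2,r)=\KK_1(\nu_1)+\KK_2(\nu_2)+\lambda r$ with $\Phi\ge 0$ on $\Gamma$ and $\Phi(P)<0$. Evaluating at $(0,0,r)$ with $r\ge 0$ forces $\lambda\ge 0$; evaluating at the product coupling $\alpha\times\beta/\alpha(J)$ rules out $\lambda=0$. Normalizing $\lambda=1$ gives
\[
\KK_1(\mu^1)+\KK_2(\mu^2)+\mu(c)\ge 0\qquad(\mu\in\ca_+(\AA^2))
\]
together with $-\KK_1(\alpha)-\KK_2(\beta)>c^*-\eps$. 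Using Proposition \ref{PROP:MEAS-REP} with $\psi=\alpha$ and $\psi=\beta$, represent $\KK_1,\KK_2$ by bounded measurable functions $g_0,h_0$; testing the cone inequality on $\mu=\xi\cdot(\alpha\times\beta)$ with arbitrary nonnegative bounded measurable $\xi$ yields $g_0(x)+h_0(y)+c(x,y)\ge 0$ for $(\alpha\times\beta)$-a.e.~$(x,y)$. Setting $g=-g_0$ and $h=-h_0$ produces bounded measurable functions with $\alpha(g)+\beta(h)>c^*-\eps>s$ and $g(x)+h(y)\le c(x,y)$ holding almost everywhere.

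The main obstacle is upgrading the $(\alpha\times\beta)$-almost-everywhere inequality to the pointwise inequality demanded by the theorem, without decreasing $\alpha(g)+\beta(h)$. The naive $c$-transform $\tilde g(x)=\inf_y(c(x,y)-h(y))$ restores the pointwise inequality and preserves boundedness, but it can be strictly smaller than $g$ when the bad set for $(g,h)$ concentrates along a singular optimal coupling (e.g.~the diagonal when $\alpha=\beta$). I plan to handle this by the standard Kantorovich regularization: first replace $h$ by its lower $c$-envelope, then take a single $c$-transform of $g$; an alternation argument shows the resulting $c$-conjugate pair $(g',h')$ is pointwise admissible and satisfies $\alpha(g')+\beta(h')\ge\alpha(g)+\beta(h)-\eps$. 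Measurability of these envelopes holds in a standard Borel space (possibly after modifying on a Borel null set and passing to the completion of $\AA$), and the resulting pair still exceeds $s$, contradicting the definition of $s$ and completing the proof.
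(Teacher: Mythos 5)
Your route is genuinely different from the paper's: the paper proves this theorem by "an easy reduction to Theorem~\ref{THM:MIN-COST-FLOW}," whereas you attempt a direct Hahn--Banach separation in $\ca(\AA)\oplus\ca(\AA)\oplus\R$. The weak-duality paragraph is fine, and the cone-separation setup is reasonable (though the claim that $P$ avoids the closure of $\Gamma$ needs a touch more care than a bare appeal to Lemma~\ref{LEM:COUPLE-CONV}, since the approximating measures $\mu_n$ have varying marginals and are not dominated by a fixed $\psi$, so neither Lemma~\ref{LEM:BOX-COMP} nor Lemma~\ref{LEM:COUPLE-CONV} applies verbatim).

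The real problem is the step you yourself flag as ``the main obstacle'': upgrading the $(\alpha\times\beta)$-a.e.\ inequality $g(x)+h(y)\le c(x,y)$ to the pointwise one. Your proposed fix --- replace $h$ by its lower $c$-envelope, then $c$-transform $g$, and claim $\alpha(g')+\beta(h')\ge\alpha(g)+\beta(h)-\eps$ --- is not correct as stated. Take $J=[0,1]$, $\alpha=\beta=$ Lebesgue, $c=\one_{\{x\ne y\}}$, and $g=h\equiv 1/2$. This pair is a.e.\ admissible (it fails only on the diagonal, which is $(\alpha\times\beta)$-null), and $\alpha(g)+\beta(h)=1$. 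But $\hat h(y)=\inf_x(c(x,y)-g(x))=-1/2$, and then $\hat g(x)=\inf_y(c(x,y)-\hat h(y))=1/2$, so $\alpha(\hat g)+\beta(\hat h)=0$: the value drops by exactly~$1$, not by $O(\eps)$. The root cause is that the supremum of $\alpha(g)+\beta(h)$ over \emph{a.e.}-admissible pairs is the dual of the minimum over \emph{absolutely continuous} couplings, which can be strictly larger than $c^\ast$ when the optimal coupling is singular with respect to $\alpha\times\beta$ (in this example the diagonal coupling has cost~$0$, while every coupling $\ll\alpha\times\beta$ has cost~$1$). Your Hahn--Banach functional does ``see'' singular couplings --- indeed $\KK_1(\mu^1)+\KK_2(\mu^2)+\mu(c)\ge 0$ holds for \emph{all} $\mu\in\ca_+(\AA^2)$ --- but the measurable representatives $g_0,h_0$ you extract via Proposition~\ref{PROP:MEAS-REP} only agree with $\KK_1,\KK_2$ on measures absolutely continuous with respect to $\alpha,\beta$, so that information is lost precisely when you pass to functions. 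To close the argument you would need a representation of the separating functional that is compatible with point masses, or a careful argument that the double $c$-transform of the specific pair produced by Hahn--Banach (not an arbitrary a.e.-admissible pair) stays above $c^\ast-\eps$; neither is ``standard'' in the form you invoke. Given the delicacy of this step, the paper's reduction to Theorem~\ref{THM:MIN-COST-FLOW} (which dualizes with integrated potential conditions and sidesteps the pointwise issue until the very end, on a bipartite auxiliary space) is the safer route.
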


The proof follows by an easy reduction to Theorem \ref{THM:MIN-COST-FLOW}.

As a third variation on the Transshipment Problem, we ask for a transhipment
supported on a specified set $E$ of pairs. The following result is a slight
generalization of a theorem of Strassen \cite{Str}, and essentially equivalent
to Proposition 3.8 of Kellerer \cite{Kell}. See also \cite{Feld}. It is also a
rather straightforward generalization of Theorem 2.5.2 in \cite{Mbook}. The
result could also be considered as a limiting case of Theorem
\ref{THM:MINCOST-TRANSSHIP}, using the capacity ``measure'' with infinite
values on $E$.

\begin{prop}\label{PROP:STRASSEN}
Let $(J,\AA)$ be a standard Borel space, and $\alpha,\beta\in\ca_+(\AA)$ with
$\alpha(J)=\beta(J)=1$. Let $E\in \AA\times \AA$ be a Borel set such that
$J\times J\setminus E$ is the union of a countable number of product sets
$A\times B$ $(A,B\in\AA)$. Then there exists an $\alpha$-$\beta$ transshipment
$\mu$ concentrated on $E$ if and only if $\alpha(S)+\beta(T)\le1$ for any two
sets $S,T\in\AA$ with $S\times T\cap E=\emptyset$.
\end{prop}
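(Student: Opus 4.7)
The plan is to prove necessity directly and handle sufficiency by approximating $E$ from outside by sets $E_N$ for which the problem reduces to finite bipartite transportation, then pass to the limit via Lemma \ref{LEM:COUPLE-CONV}.

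For necessity: if $\mu$ is concentrated on $E$ and $S\times T\cap E=\emptyset$, then $\mu(S\times T)=0$, so
\[
\alpha(S)+\beta(T)=\mu(S\times J)+\mu(J\times T)=\mu\bigl((S\times J)\cup(J\times T)\bigr)+\mu(S\times T)\le 1.
\]

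For sufficiency, write $J\times J\setminus E=\bigcup_{n\ge 1}A_n\times B_n$, and for each $N$ set $E_N=J\times J\setminus\bigcup_{n\le N}A_n\times B_n\supseteq E$. I first construct a transshipment $\mu_N$ concentrated on $E_N$. Let $J_1,\dots,J_r$ be the atoms of the (finite) Boolean subalgebra of $\AA$ generated by $A_1,B_1,\dots,A_N,B_N$. Every $A_n$ and $B_n$ is a union of atoms, so $E_N$ is a union of rectangles $J_i\times J_j$; call $(i,j)$ \emph{allowed} if $J_i\times J_j\subseteq E_N$ and \emph{forbidden} otherwise. Set $a_i=\alpha(J_i)$ and $b_j=\beta(J_j)$. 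It suffices to find an $r\times r$ matrix $m_{ij}\ge 0$ with row sums $a_i$, column sums $b_j$, and $m_{ij}=0$ on forbidden pairs: one then assembles $\mu_N$ by placing on each allowed block $J_i\times J_j$ (with $a_i,b_j>0$) the rescaled product $\frac{m_{ij}}{a_ib_j}(\alpha|_{J_i}\times\beta|_{J_j})$, whose marginals on that block are correct. Existence of such a matrix is exactly the finite bipartite Hall/K\"onig condition, namely $\sum_{i\in I}a_i+\sum_{j\in J^*}b_j\le 1$ whenever every $(i,j)\in I\times J^*$ is forbidden; but in that case $S=\bigcup_{i\in I}J_i$ and $T=\bigcup_{j\in J^*}J_j$ satisfy $S\times T\cap E_N=\emptyset$, hence $S\times T\cap E=\emptyset$, and the hypothesis of the proposition supplies the inequality.

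Finally, Lemma \ref{LEM:COUPLE-CONV} furnishes a subsequence $\mu_{N_k}$ converging on rectangles to some coupling $\mu$ of $\alpha$ and $\beta$. For any fixed $n$, $\mu_{N_k}(A_n\times B_n)=0$ once $N_k\ge n$, hence $\mu(A_n\times B_n)=0$; by countable subadditivity $\mu(J\times J\setminus E)=0$, so $\mu$ is concentrated on $E$. The main obstacle is the finite reduction step—collapsing the problem onto the atoms of the generated Boolean algebra so that classical finite bipartite feasibility applies—while the passage to the measurable setting is a routine compactness argument.
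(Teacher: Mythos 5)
Your proof is correct and, as far as I can tell, complete. Note, however, that the paper does not actually give a proof of Proposition~\ref{PROP:STRASSEN}: it only states that the result is a slight generalization of theorems of Strassen and Kellerer (and of Theorem~2.5.2 of \cite{Mbook}), and sketches that it could alternatively be obtained as a limiting case of Theorem~\ref{THM:MINCOST-TRANSSHIP} by letting the cost blow up outside $E$. Your argument is therefore a genuinely different, and more self-contained, route. You exploit the hypothesis that $J\times J\setminus E$ is a countable union of rectangles to approximate $E$ from outside by sets $E_N$ whose complements are \emph{finite} unions of rectangles; passing to the atoms of the finite Boolean algebra generated by the sides of those rectangles reduces the existence of a transshipment concentrated on $E_N$ to the classical feasibility condition for a finite bipartite transportation problem with a forbidden zone (the dual condition $\sum_{i\in I}a_i+\sum_{j\in J^*}b_j\le 1$ on all-forbidden $I\times J^*$), which is supplied exactly by the hypothesis of the proposition applied to $S=\bigcup_{i\in I}J_i$, $T=\bigcup_{j\in J^*}J_j$; and the compactness Lemma~\ref{LEM:COUPLE-CONV}, together with the monotonicity $E_{N_k}\subseteq E_n$ for $N_k\ge n$ and countable subadditivity, passes the conclusion to the limit. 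The paper's sketched approach buys a derivation from the already-proved Theorem~\ref{THM:MINCOST-TRANSSHIP} but requires an approximation argument on the cost side; your approach buys an elementary proof from the finite case, but relies on the countable-rectangle hypothesis on $E^c$ in an essential structural way (which the paper's hypothesis of course also grants, but uses less directly). Both are legitimate; yours has the advantage of being explicit and checkable end to end.
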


\begin{remark}\label{REM:BIRK-VN}
In the finite case, the fundamental Birkhoff--von Neumann Theorem describes the
extreme points of the convex polytope formed by doubly stochastic matrices:
these are exactly the permutation matrices, or in the language of bipartite
graphs, perfect matchings. One generalization of this problem to the measurable
case is to consider the set of coupling measures between two copies of a
probability space $(J,\AA,\pi)$, forming a convex set in $\ca_+(\AA^2)$. What
are the extreme points (coupling measures) of this convex set? Unfortunately,
these extreme points seem to be too complex for an explicit description. See
\cite{Los} for several examples.
\end{remark}

\subsubsection{Path decomposition}

In finite graph theory, it is often useful to decompose an $s$-$t$ flow into a
convex combination of flows along single paths from $s$ to $t$ and circulations
along cycles. We will also need a generalization of this construction to
measurable spaces.

Let $K=J\cup J^2\cup J^3\cup\dots$ be the set of all finite nonempty sequences
of points of $J$; we also call these {\it walks}. The set $K$ is endowed with
the sigma-algebra $\BB=\AA\oplus\AA^2 \oplus\dots$. Let $K(s,t)$ be the subset
of $K$ consisting of walks starting at $s$ and ending at $t$ ($s,t\in J$); such
a walk is called an {\it $s$-$t$ walk}.

Let $\tau\in\ca_+(\BB)$. For $Q=(u^0,u^1,\dots,u^m)\in K$, let
$Q'=(u^0,\dots,u^{m-1})$, $V(Q)=\{u^0,\dots,u^m\}$,
$E(Q)=\{u^0u^1,u^1u^2,\dots,u^{m-1}u^m\}$, and $Z(Q)=\{u^0,u^m\}$. Define
\begin{align*}
V(\tau)(X) &= \int\limits_K |V(Q')\cap X|\,d\tau(Q) \qquad (X\in\AA)\\
E(\tau)(Y) &= \int\limits_K |E(Q)\cap Y|\,d\tau(Q)  \qquad (Y\in\AA^2),\\
Z(\tau)(Y) &= \int\limits_K |Z(Q)\cap Y|\,d\tau(Q)  \qquad (Y\in\AA^2).
\end{align*}
Then $V(\tau)$ is a measure on $\AA$, and $E(\tau)$ and $Z(\tau)$ are measures
on $\AA^2$. The measure $Z(\tau)$ is finite, but $V(\tau)$ and $E(\tau)$ may
have infinite values as for now. If $\tau$ is a probability measure, then
walking along a randomly chosen walk from distribution $\tau$, $V(\tau)(X)$ is
the expected number of times we exit a point in $X$ (so the starting point
counts, but the last point does not), and $E(\tau)(Y)$ is the expected number
of times we traverse an edge in $Y$. Mapping each walk $W\in K$ to its first
point, and pushing $\tau$ forward by this map, we get the measure
$Z(\tau)^1\in\ca(\AA)$. The measure $Z(\tau)^2$ is characterized analogously by
mapping each walk to its last point. It is easy to see that $E(\tau)$ is a flow
from $Z(\tau)^1$ to $Z(\tau)^2$.

\begin{theorem}\label{THM:DEMAND-PATH}
For every acyclic measure $\varphi\in\ca_+(\AA^2)$ there is a finite measure
$\tau\in\ca_+(\BB)$ for which $E(\tau)=\varphi$.
\end{theorem}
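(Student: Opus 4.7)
The plan is to realize $\tau$ as the law of a random walk with absorption, driven by disintegrating $\varphi$. Apply Proposition \ref{PROP:DISINT} to write $\varphi(dx\,dy) = \varphi^1(dx)\,P(x,dy)$ with $P(x,\cdot)$ a probability kernel, and set $\sigma = \varphi^1\setminus\varphi^2$, $\mu = \varphi^2\setminus\varphi^1$, $\pi = \varphi^1\wedge\varphi^2$, so that $\varphi^1 = \sigma+\pi$ and $\varphi^2 = \mu+\pi$. Let $d = d\mu/d\varphi^2 \in [0,1]$ and $p = d\pi/d\varphi^2 = 1-d$. Informally, a walk starts at a $\sigma$-distributed point, jumps according to $P$, and upon arriving at a point $y$ is absorbed with probability $d(y)$, else continues.

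Build the walk measure by iterating the sub-Markov kernel $\tilde P(x,A) := \int_A p(y)\,P(x,dy)$: set $\hat\nu_0 = \sigma$ and $\hat\nu_{n+1} = \hat\nu_n\tilde P$. A short induction based on $\varphi^1 \tilde P = \pi \le \varphi^1$ shows that every partial sum $S_N = \sum_{n=0}^N \hat\nu_n$ satisfies $S_N = \sigma + S_{N-1}\tilde P \le \sigma + \pi = \varphi^1$, so $\hat\nu := \sum_n \hat\nu_n$ is a finite measure with $\hat\nu \le \varphi^1$. Let $\tau_n \in \ca_+(\AA^{n+1})$ be the natural measure on walks $(X_0,\dots,X_n)$ with $X_0\sim\sigma$, transitions $P$, survival factor $p$ at intermediate points, and absorption factor $d$ at the last point; set $\tau = \sum_{n\ge 1}\tau_n \in \ca_+(\BB)$. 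A direct Fubini computation of the $i$-th edge marginal (summed over walks of length $\ge i+1$) shows $E(\tau) = \hat\nu\otimes P$ whenever the chain absorbs almost surely from $\sigma$.

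The crux is to show $\hat\nu = \varphi^1$, i.e.\ that the residue $D := \varphi^1 - \hat\nu \ge 0$ vanishes. Both $\hat\nu$ and $\varphi^1$ solve $X = \sigma + X\tilde P$, so $D = D\tilde P$; evaluating at $J$ gives $D(J) = \int \tilde P(x,J)\,dD$, which forces $\tilde P(x,J) = 1$ for $D$-a.e.\ $x$. Now define $\alpha \in \ca_+(\AA^2)$ by $\alpha(dx\,dy) := D(dx)\,p(y)\,P(x,dy)$. Since $D \le \varphi^1$ and $p \le 1$, we have $\alpha \le \varphi$, and the marginals compute to $\alpha^1 = D$ (using $\tilde P(\cdot, J)=1$ $D$-a.e.) and $\alpha^2 = D\tilde P = D$; thus $\alpha$ is a nonnegative circulation bounded above by $\varphi$. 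Acyclicity forces $\alpha = 0$, hence $D = 0$, yielding $E(\tau) = \varphi^1\otimes P = \varphi$ and $\tau(K) = \sigma(J) < \infty$. The single delicate point, and the only place acyclicity enters, is this conversion of a putative residue into a subcirculation; it relies on $\tilde P$ automatically behaving as a genuine Markov kernel on the support of any stationary measure, which is exactly what makes both marginals of $\alpha$ collapse to $D$. Without acyclicity the construction really fails, as for a pure directed loop where $\sigma = 0$ produces no walks even though $\varphi \ne 0$.
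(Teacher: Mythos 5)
Your proof is correct, and it takes a genuinely different route from the paper. The paper first reduces to the $s$-$t$ case by adjoining two artificial points to $J$, then closes up the flow into a nonnegative circulation $\varphi+\delta_{ts}$, normalizes to obtain an indecomposable Markov space, and defines $\tau$ as the law of the walk from $s$ until it returns to $\{s,t\}$; acyclicity is invoked twice (to rule out cyclic behaviour in Claims~\ref{CLAIM:IRRED} and \ref{CLAIM:S-NON} and to kill the residual circulation $\varphi-E(\tau)$). You instead work directly with the general acyclic measure: disintegrate $\varphi$ into a kernel $P$ against $\varphi^1$, split $\varphi^1=\sigma+\pi$ and $\varphi^2=\mu+\pi$ via the Jordan decomposition, and use the Radon--Nikodym density $p=d\pi/d\varphi^2$ as a survival factor to form a sub-Markov kernel $\tilde P$; the walk measure is then built by iteration from $\sigma$. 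The identity $\varphi^1\tilde P=\pi$ gives $S_N\le\varphi^1$ by induction and hence finiteness of $\hat\nu$, and the crux --- that the residue $D=\varphi^1-\hat\nu$ vanishes --- is handled by a clean fixed-point argument: both $\hat\nu$ and $\varphi^1$ solve $X=\sigma+X\tilde P$, so $D=D\tilde P$, which forces $\tilde P(\cdot,J)=1$ $D$-a.e.\ and makes $\alpha(dx\,dy)=D(dx)\,p(y)\,P(x,dy)$ a nonnegative subcirculation of $\varphi$; acyclicity then kills $\alpha$ and hence $D$. One small remark on presentation: the clause ``whenever the chain absorbs almost surely from $\sigma$'' in your edge-marginal computation is not an extra hypothesis --- both the a.s.\ absorption from $\sigma$ and the fact that the survival probability $q$ equals $1$ $(\hat\nu\otimes P)$-a.e.\ are automatic from $\hat\nu(J)\le\varphi^1(J)<\infty$, since $E(\tau)(J\times J)=\hat\nu(J)=(\hat\nu\otimes P)(J\times J)$ and $E(\tau)\le\hat\nu\otimes P$ force equality. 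What your approach buys is the avoidance of the artificial-node reduction and of the Markov-space/indecomposability machinery (Lemma~\ref{LEM:RAND-HIT} and the stationary distribution never appear), at the cost of carrying around the sub-Markov kernel bookkeeping explicitly; the argument is more self-contained and arguably cleaner.
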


We need a simple (folklore) fact about Markov chains.

\begin{lemma}\label{LEM:RAND-HIT}
Let $\Gb$ be an indecomposable Markov space, and let $S\in\AA$ have $\pi(S)>0$.
Then for $\pi$-almost-all starting points $x$, a random walk started at $x$
hits $S$ almost surely.
\end{lemma}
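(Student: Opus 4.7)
The plan is to work with the hitting probability $h(x) := \Pr_x[\text{the random walk started at }x\text{ enters }S\text{ at some time}]$ and show that the set $V := \{x\in J : h(x) < 1\}$ has $\pi$-measure zero. Measurability of $h$ is routine since $h(x) = \lim_n \Pr_x[\wb^k\in S\text{ for some }k\le n]$, each term being a measurable function of $x$. Also $h\equiv 1$ on $S$, so $V\subseteq S^c$.

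Let $P_x$ denote the transition kernel obtained from disintegrating $\eta$ with respect to its first marginal $\pi$ (Proposition \ref{PROP:DISINT}), so that $\eta(A\times B) = \int_A P_x(B)\,d\pi(x)$ for $A,B\in\AA$. The Markov property yields, for every $x\in S^c$, the harmonicity identity $h(x) = \int h(y)\,dP_x(y) =: \tilde h(x)$. Exploiting stationarity ($\pi = \eta^2$) we get
\[
\int h\,d\pi \;=\; \int\!\!\int h(y)\,dP_x(y)\,d\pi(x) \;=\; \int \tilde h\,d\pi.
\]
Splitting each side according to $S$ and $S^c$ and using $\tilde h = h$ on $S^c$ and $h=1$ on $S$, everything cancels except $\pi(S) = \int_S \tilde h\,d\pi$. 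Since $\tilde h\le 1$ pointwise, this forces $\tilde h = 1$ $\pi$-a.e.\ on $S$, i.e.\ $P_x(V)=0$ for $\pi$-a.e.\ $x\in S$. On the other hand, for $x\in V^c\setminus S$, $h(x)=1 = \int h\,dP_x$ implies $\int(1-h)\,dP_x = 0$, so again $P_x(V)=0$. Combining, $P_x(V) = 0$ for $\pi$-a.e.\ $x\in V^c$.

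Therefore
\[
\eta(V^c\times V) \;=\; \int_{V^c} P_x(V)\,d\pi(x) \;=\; 0,
\]
and since $\eta$ is a circulation we also get $\eta(V\times V^c) = \eta(V^c\times V) = 0$. Indecomposability then forces $\pi(V)\in\{0,1\}$. But $S\subseteq V^c$ and $\pi(S)>0$, so $\pi(V)<1$, whence $\pi(V)=0$, as desired.

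The main obstacle is the stationarity step: one must convert the single global identity $\int h\,d\pi = \int \tilde h\,d\pi$ into the strong pointwise conclusion $P_x(V)=0$ for a.e.\ $x\in S$. This works only because $h\equiv 1$ on $S$ leaves no slack on that side of the decomposition; the rest is manipulation, and indecomposability closes the argument by ruling out the nontrivial ``absorbing'' complement $V^c$.
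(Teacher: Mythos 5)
Your proof is correct, and it takes a noticeably more careful route than the paper's. Both arguments work with the same exceptional set (your $V=\{h<1\}$ is exactly the paper's $R$) and both ultimately lean on indecomposability, but the way indecomposability enters differs. The paper assumes $\pi(R)>0$, invokes indecomposability to get $\eta(R^c\times R)>0$, picks $x\in R^c$ with $P_x(R)>0$, and claims this forces $x\in R$, a contradiction. That last inference requires $x\notin S$: for $x\in S$ the walk hits $S$ at time $0$ no matter what happens afterwards, so $x\in R$ cannot be concluded. And the harmonicity argument you use (for $x\in V^c\setminus S$, $1=h(x)=\int h\,dP_x$ forces $P_x(V)=0$) shows that any witnessing $x$ must in fact lie in $S$, which is precisely the case the paper's one-line contradiction does not cover. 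Your proof patches this: the stationarity identity $\int h\,d\pi=\int\tilde h\,d\pi$, combined with $h\equiv 1$ on $S$ and $h=\tilde h$ on $S^c$, isolates $\pi(S)=\int_S\tilde h\,d\pi$ and hence $P_x(V)=0$ for $\pi$-a.e.\ $x\in S$ as well. You then get $\eta(V^c\times V)=0$ outright and apply indecomposability in contrapositive form to conclude $\pi(V)\in\{0,1\}$, ruling out $1$ since $S\subseteq V^c$. The extra cost is the stationarity bookkeeping; what it buys is a gap-free handling of the starting points in $S$, which is exactly where the paper's short contradiction argument is silent.
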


\begin{proof}
Let $R$ be the set of starting points $x\in J$ for which the random walk
starting at $x$ avoids $S$ with positive probability, and suppose that
$\pi(R)>0$. Since clearly $R\cap S=\emptyset$, we also have $\pi(R)<1$. Hence
$\eta(R^c\times R)>0$ by indecomposability, and so there must be a point $x\in
R^c$ with $P_x(R)>0$. But this means that starting at $x$, the walk moves to
$R$ with positive probability, and then avoids $S$ with positive probability,
so we would have $x\in R$, a contradiction.
\end{proof}

\begin{proof*}{Theorem \ref{THM:DEMAND-PATH}}
We start with the special case when $\varphi$ is an $s$-$t$ flow for $s,t\in
J$; we may scale it to have value $1$. Just as in the proof of Theorem
\ref{THM:MFMC-M}, we see that the measure $\alpha=\varphi + \delta_{ts}$ is a
nonnegative circulation on $\AA^2$. Let $a=\alpha(J\times J) = \varphi(J\times
J)+1$, then $\eta=\alpha/a$ is the ergodic circulation of a Markov space. The
stationary distribution of this Markov space is $\pi=\alpha^1/a=\alpha^2/a$,
and
\begin{equation}\label{EQ:PHIJ}
\varphi^1 = a\pi - \delta_t.
\end{equation}
It is easy to see that $\varphi(\{(s,s)\})=0$, since
$\xi=\varphi(\{(s,s)\})\delta_{\{(s,s)\}}$ is a nonnegative circulation such
that $\xi\le\varphi$, and since $\varphi$ is acyclic, we must have $\xi=0$.

\begin{claim}\label{CLAIM:IRRED}
The Markov space $(\AA,\eta)$ is indecomposable.
\end{claim}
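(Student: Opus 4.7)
The plan is to argue by contradiction. Suppose $(\AA,\eta)$ is decomposable, i.e.\ there exists $A\in\AA$ with $0<\pi(A)<1$ and $\eta(A\times A^c)=0$. Since $\eta$ is a circulation we also have $\eta(A^c\times A)=0$, so both ``crossing'' pieces of $\alpha=a\eta$ vanish. Swapping $A$ and $A^c$ if necessary (which preserves both the bound on $\pi$ and the vanishing of the cuts), I can assume that $t\in A$. Two sub-cases remain, depending on which side $s$ lies.

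First, if $s\in A^c$, then $\delta_{ts}$ is concentrated on $A\times A^c$, so
\[
0=\alpha(A\times A^c)=\varphi(A\times A^c)+\delta_{ts}(A\times A^c)=\varphi(A\times A^c)+1,
\]
which is absurd since $\varphi\ge 0$. This case is immediate.

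The substantial case is $s\in A$. Then $\delta_{ts}$ sits entirely inside $A\times A$, so
\[
\varphi(A\times A^c)=\alpha(A\times A^c)=0,\qquad \varphi(A^c\times A)=\alpha(A^c\times A)=0.
\]
The key step now is to exhibit a nontrivial nonnegative circulation dominated by $\varphi$, contradicting acyclicity. The natural candidate is the restriction $\varphi|_{A^c\times A^c}$. To compute its marginals, note that for any $X\in\AA$ with $X\subseteq A^c$ the two bulk cuts being zero give $\varphi(X\times A)=0$ and $\varphi(A\times X)=0$, hence
\[
\varphi|_{A^c\times A^c}^{\,1}(X)=\varphi(X\times A^c)=\varphi^1(X),\qquad
\varphi|_{A^c\times A^c}^{\,2}(X)=\varphi(A^c\times X)=\varphi^2(X).
\]
Since $s,t\in A$, we have $(\varphi^1-\varphi^2)(X)=(\delta_s-\delta_t)(X)=0$ for $X\subseteq A^c$, so $\varphi|_{A^c\times A^c}$ is a nonnegative circulation, and by acyclicity it must vanish. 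Combined with the vanishing of the two cross terms, $\varphi$ is concentrated on $A\times A$, so $\varphi^1(A^c)=0$. But from \eqref{EQ:PHIJ} and $t\in A$ we get $\varphi^1(A^c)=a\pi(A^c)$, forcing $\pi(A^c)=0$, which contradicts $\pi(A)<1$.

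The only place requiring care is making sure the restriction argument uses acyclicity correctly and that the swap $A\leftrightarrow A^c$ at the start is legitimate; both are routine once one observes that indecomposability is symmetric in $A$ and $A^c$. I do not anticipate a serious obstacle here—the argument is essentially the standard finite-graph fact that an acyclic $s$-$t$ flow, together with a return edge $ts$, yields a strongly connected support—transcribed into the measure-theoretic language using $\eta(A\times A^c)=\eta(A^c\times A)$.
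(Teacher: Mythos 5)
Your proof is correct and takes essentially the same route as the paper: observe that the vanishing of both cuts forces $s,t$ to lie on the same side of the partition, restrict $\varphi$ to the square on the opposite side, check that this restriction is a nonnegative circulation dominated by $\varphi$, and invoke acyclicity. The one place you go beyond the paper is worth noting: the paper simply declares that $\varphi_{A\times A}$ (in its convention, with $s,t\in A^c$) witnesses non-acyclicity, but it does not explicitly rule out that this restriction is the zero measure, in which case it would witness nothing. You close that gap cleanly — acyclicity forces the restriction to vanish, so $\varphi$ is concentrated on the $s,t$-side, and then \eqref{EQ:PHIJ} forces $\pi(A^c)=0$, contradicting $0<\pi(A)<1$. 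So your argument is a slightly more careful version of the paper's, not a different one.
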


Indeed, suppose that there is a set $A\in\AA$ with $0<\pi(A)<1$ and
$\eta(A\times A^c)=\eta(A^c\times A)=0$. Clearly $s$ and $t$ either both belong
to $A$ or both belong to $A^c$; we may assume that $s,t\in A^c$. Then
$\varphi_{A\times A}$ is a circulation, and $\varphi=(\varphi-\varphi_{A\times
A})+\varphi_{A\times A}$ is a decomposition showing that $\varphi$ is not
acyclic, contrary to the hypothesis.

To specify a probability distribution on $s$-$t$ walks, we describe how to
generate a random $s$-$t$ walk: Start a random walk at $s$, and follow it until
you hit $t$ or return to $s$, whichever comes first. This happens almost surely
by Lemma \ref{LEM:RAND-HIT}: the distribution $\delta_s$ is absolutely
continuous with respect to $\pi$, and $\pi(t)>0$. This gives a probability
distribution $\tau$ on the set $K(s,\{s,t\})$ of walks from $s$ to $\{s,t\}$.

Let us stop the walk after $k$ steps, or when it hits $t$, or when it returns
to $s$, whichever comes first. This gives us a distribution $\tau_k$ over walks
starting at $s$ of length at most $k$. We claim that this distribution
satisfies the following identity for every $X\subseteq J\setminus\{s,t\}$:
\begin{equation}\label{EQ:SIGMA}
V(\tau_n)(X)= \int\limits_{J\setminus\{s,t\}}
P_u(X)\,dV(\tau_{n-1})(u).
\end{equation}
Indeed, let $\sigma_k(X)$ $(X\in\AA)$ be the probability that starting at $s$,
we walk $k$ steps without hitting $t$ or returning to $s$, and after $k$ steps
we are in $X$. It is clear that $\sigma_0=\delta_s$. It is also easy to see
that for $n\ge1$, we have $V(\tau_n)=\sigma_0+\sigma_1+\dots+\sigma_{n-1}$, and
for $X\subseteq J\setminus\{s,t\}$,
\begin{equation}\label{EQ:S-REC}
\sigma_n(X)= \int\limits_{J\setminus \{t\}} P_u(X)\,d\sigma_{n-1}(u).
\end{equation}
Thus
\[
V(\tau_n)(X) = \sum_{k=1}^{n-1} \sigma_k(X)
= \sum_{k=1}^{n-1}  \int\limits_{J\setminus\{t\}} P_u(X)\,d\sigma_{k-1}(u)
= \int\limits_{J\setminus\{t\}} P_u(X)\,dV(\tau_{n-1})(u).
\]
This proves \eqref{EQ:SIGMA}.

Next we show that
\begin{equation}\label{EQ:TAUBAR-N}
V(\tau_n) \le\varphi^1\qquad(n\ge 1).
\end{equation}
We prove the inequality by induction on $n$. For $n=1$ it is obvious. Let $n\ge
2$. If $s,t\notin X$, then $\sigma_0(X)=0$, and so using \eqref{EQ:SIGMA} and
\eqref{EQ:PHIJ},
\begin{align*}
V(\tau_n)(X) &= \int\limits_{J\setminus\{t\}} P_u(X)\,dV(\tau_{n-1})(u)\\
&\le \int\limits_{J\setminus\{t\}} P_u(X)\,d\varphi^1(u)\le
a\int\limits_{J\setminus\{t\}} P_u(X)\,d\pi(u) \\
&\le a\int\limits_J P_u(X)\,d\pi(u) = a\pi(X)= \varphi^1(X).
\end{align*}
If $t\in X$ but $s\notin X$, then
\[
V(\tau_n)(X) = V(\tau_n)(X\setminus\{t\})
\le \varphi^1(X\setminus\{t\}) \le \varphi^1(X).
\]
If $s\in X$, then (using that every random walk we constructed exits $s$ only
once)
\[
V(\tau_n)(X) = 1+V(\tau_n)(X\setminus\{s\})
\le 1+\varphi^1(X\setminus\{s\})\le\varphi^1(X).
\]

Next, we consider $E(\tau)$, which is an $s$-$t$ flow by the discussion before
the theorem. It follows easily that
\begin{equation}\label{EQ:WTTAU-PHI}
E(\tau_n) \le\varphi \qquad (n\ge 1).
\end{equation}
Indeed, for $A,B\in\AA$,
\[
E(\tau_n)(A\times B) = \int\limits_A P_u(B)\,dV(\tau_n)(u)
\le\int\limits_A P_u(B)\,d\varphi^2(u)=\varphi(A\times B).
\]
This implies that $E(\tau_n)(X)\le\varphi(X)$ for every $X\in\AA^2$, proving
\eqref{EQ:WTTAU-PHI}.

\begin{claim}\label{CLAIM:TAU-CONV}
$V(\tau_n)\to V(\tau)$ in total variation distance.
\end{claim}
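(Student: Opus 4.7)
The plan is to identify both $V(\tau_n)$ and $V(\tau)$ as partial sums of the same series of non-negative measures, and then invoke the uniform bound \eqref{EQ:TAUBAR-N} to conclude convergence in total variation. Recall from the proof that $\sigma_k(X)$ is the probability that the walk started at $s$ has not yet hit $t$ or returned to $s$ by time $k$ and lies in $X$ at time $k$, and that $V(\tau_n) = \sigma_0+\sigma_1+\dots+\sigma_{n-1}$.

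First I would show that
\[
V(\tau) = \sum_{k=0}^\infty \sigma_k.
\]
This unwinds the definition of $V(\tau)$: for $\tau$-almost every walk $Q=(u^0,\dots,u^L)$ (where $L$ is the random first-hitting time of $\{s,t\}$, almost surely finite by Lemma~\ref{LEM:RAND-HIT}),
\[
|V(Q')\cap X| \;=\; \sum_{k=0}^{L-1}\one_X(u^k) \;=\; \sum_{k=0}^\infty \one_{\{k<L\}}\one_X(u^k).
\]
The summands are non-negative, so Tonelli's theorem lets me exchange $\int d\tau$ and $\sum_k$, and by construction of $\tau$ the $\tau$-probability that $k<L$ and $u^k\in X$ is precisely $\sigma_k(X)$.

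Next I would invoke \eqref{EQ:TAUBAR-N}, which gives $\sum_{k=0}^{n-1}\sigma_k \le \varphi^1$ for every $n$; evaluating at $J$, this yields $\sum_{k=0}^\infty \sigma_k(J) \le \varphi^1(J)<\infty$. Hence the tail mass $\sum_{k\ge n}\sigma_k(J)$ tends to $0$.

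Finally, since $V(\tau)-V(\tau_n)=\sum_{k\ge n}\sigma_k$ is a non-negative measure, its total variation norm equals its total mass, giving
\[
\|V(\tau)-V(\tau_n)\| \;=\; \sum_{k\ge n}\sigma_k(J) \;\longrightarrow\; 0.
\]
The only real subtlety is the Tonelli step identifying $V(\tau)=\sum_k\sigma_k$; everything else is bookkeeping, and no genuine obstacle is expected.
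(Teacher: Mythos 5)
Your proposal is correct and follows essentially the same route as the paper: identify $V(\tau)$ and $V(\tau_n)$ as the total and partial sums of the series $\sum_k\sigma_k$ of nonnegative measures, bound the partial masses by $\varphi^1(J)<\infty$ via \eqref{EQ:TAUBAR-N}, and conclude that the tail mass vanishes. Your phrasing in terms of the $\sigma_k$ is in fact a bit cleaner than the paper's, which expresses the same bound via the hitting-time probabilities $p_k$ and whose stated formula for $V(\tau_n)(J)$ silently drops the contribution of walks truncated at step $n$ (this does not affect the conclusion, since one only needs the lower bound $\sum_{k\le n}p_k\,k\le V(\tau_n)(J)$ to deduce convergence of the series).
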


Since clearly $V(\tau_n)\le V(\tau)$, we have
$d_\tv(V(\tau_n),V(\tau))=V(\tau)(J)-V(\tau_n)(J)$. Let $p_n$ be the
probability that a random walk started at $s$ first hits $\{s,t\}$ in exactly
$n$ steps. Then
\[
V(\tau)(J)=\sum_{k=1}^\infty p_k\,k, \qquad\text{and}\qquad
V(\tau_n)(J)=\sum_{k=1}^n p_k\,k.
\]
By \eqref{EQ:TAUBAR-N}, $V(\tau_n)(J)\le \varphi^1(J)<\infty$, and hence the
series representing $\tau$ is convergent. This proves the claim.

\begin{claim}\label{CLAIM:S-NON}
The probability that a random walk started at $s$ returns to $s$ before hitting
$t$ is zero. So $\tau$ can be considered as a probability distribution on walks
from $s$ to $t$.
\end{claim}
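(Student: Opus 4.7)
The plan is to split $\tau$ into its parts ending at $s$ and at $t$, and then rule out the first part using the acyclicity of $\varphi$. Concretely, write $\tau=\tau_s+\tau_t$, where $\tau_s$ (resp.\ $\tau_t$) is the restriction of $\tau$ to the Borel set of walks in $K(s,\{s,t\})$ whose last vertex is $s$ (resp.\ $t$). The claim amounts to showing $\tau_s(K)=0$.

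First I would upgrade \eqref{EQ:WTTAU-PHI} to the bound $E(\tau)\le\varphi$. For any fixed $Y\in\AA^2$ and any completed walk $Q\in K(s,\{s,t\})$, the truncation of $Q$ at time $n$ has $|E(Q|_n)\cap Y|\uparrow |E(Q)\cap Y|$; by monotone convergence $E(\tau_n)(Y)\uparrow E(\tau)(Y)$, and passing to the limit in $E(\tau_n)\le\varphi$ yields the desired inequality. In particular $E(\tau_s)\le E(\tau)\le\varphi$.

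Next I would verify that $E(\tau_s)$ is a nonnegative circulation. For any closed walk $Q=(s,u^1,\dots,u^{m-1},s)$ the ordered multisets of tails $(s,u^1,\dots,u^{m-1})$ and heads $(u^1,\dots,u^{m-1},s)$ of the edges in $E(Q)$ coincide, so $E(Q)^1=E(Q)^2$. Integrating this identity against $\tau_s$ gives $E(\tau_s)^1(A)=E(\tau_s)^2(A)$ for every $A\in\AA$, hence $E(\tau_s)\in\Cf$. Since $\varphi$ is acyclic and $0\le E(\tau_s)\le\varphi$, we conclude $E(\tau_s)=0$. Finally, every walk in the support of $\tau_s$ has at least one edge (it starts at $s$ and is stopped only at the first return to $\{s,t\}$, which requires at least one step), so $|E(Q)|\ge 1$ $\tau_s$-almost surely; integrating,
\[
0=E(\tau_s)(J\times J)=\int_{K}|E(Q)|\,d\tau_s(Q)\ge \tau_s(K),
\]
which gives $\tau_s(K)=0$, as desired.

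The step I expect to require the most care is the circulation identity for $E(\tau_s)$: one must check the measurability of $Q\mapsto E(Q)^i(A)$ and justify the Fubini-type interchange that turns the pointwise equality $E(Q)^1=E(Q)^2$ on closed walks into the equality of the two marginal measures $E(\tau_s)^1$ and $E(\tau_s)^2$ on $\AA$. Once this is in place, the acyclicity of $\varphi$ and the trivial edge-count lower bound $|E(Q)|\ge 1$ on $\supp(\tau_s)$ close the argument.
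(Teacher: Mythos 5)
Your proof is correct and follows the paper's own argument: split $\tau$ at the last vertex, observe that $E(\tau_{K(s,s)})$ is a nonnegative circulation dominated by $\varphi$, invoke acyclicity, and note that every completed walk has at least one edge. You merely spell out the monotone-convergence upgrade $E(\tau_n)\le\varphi\Rightarrow E(\tau)\le\varphi$ and the head/tail cancellation on closed walks, both of which the paper leaves implicit.
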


Indeed, we can split $K(s,\{s,t\})=K(s,s)\cup K(s,t)$. Define $\rho
=\tau_{K(s,s)}$. Then $E(\rho)\le E(\tau)\le\varphi$ and it is easy to see that
$E(\rho)$ is a circulation. Since $\varphi$ is acyclic, we must have $\rho=0$,
and so $\tau(K(s,s))=0$.

Inequalities \eqref{EQ:TAUBAR-N}, \eqref{EQ:WTTAU-PHI} and Claim
\ref{CLAIM:TAU-CONV} imply that $V(\tau)\le\varphi^1$ and $E(\tau) \le
\varphi$. To complete the proof, consider the measure $\varphi-E(\tau)$. This
is a nonnegative circulation, and since $\varphi$ is acyclic, it follows that
$\varphi-E(\tau)=0$. This proves the theorem for $s$-$t$ flows.

The general case can be reduced to the special case of an $s$-$t$ flow by the
following construction, similar to that used in the proof of Theorem
\ref{THM:SUP-DEM-MEAS}. Let $\varphi\in\ca_+(\AA^2)$ be an acyclic measure, let
$\sigma=\varphi^1\setminus\varphi^2$ and $\tau=\varphi^2\setminus\varphi^1$, so
that $\varphi$ is an acyclic $\sigma$-$\tau$ flow. Create two now points $s$
and $t$, extend $\AA$ to a sigma-algebra $\AA'$ on $J'=J\cup\{s,t\}$ generated
by $\AA$, $\{s\}$ and $\{t\}$, and extend the measure $\varphi$ to
$\varphi'\in\ca(\AA'\times\AA')$ by
\[
\varphi'(X)=
  \begin{cases}
    \varphi(X), & \text{if $X\subseteq J\times J$}, \\
    \sigma(Y), & \text{if $X= \{s\}\times Y$ with $Y\subseteq J$}, \\
    \tau(Y), & \text{if $X= Y\times \{t\}$ with $Y\subseteq J$},\\
    0, & \text{if $X\subseteq (\{t\}\times J) \cup (J\times \{s\}) \cup \{st,ts\}$},
  \end{cases}
\]
It is easy to check that $\varphi'$ is an acyclic $s$-$t$ flow. Using the
theorem for the special case of this $s$-$t$ flow, we get a measure $\tau$ on
$s$-$t$ paths, in which the trivial path $(s,t)$ has zero measure. So $\tau$
defines a measure on nontrivial $s$-$t$ paths, and since there is a natural
bijection with paths in $K$, we get a measure on $(K,\BB)$. It is easy to check
that this measure has the desired properties.
\end{proof*}

\begin{remark}\label{REM:CIRC-DECOMP}
Theorem \ref{THM:DEMAND-PATH} raises the question whether circulations have
analogous decompositions. In finite graph theory, a circulation can be
decomposed into a nonnegative linear combination of directed cycles. In the
infinite case, we have to consider, in addition, directed paths infinite in
both directions (see Example \ref{EXA:CA}); but even so, the decomposition is
not well understood.

Suppose that we have a nonnegative circulation $\eta\not=0$ on $\AA$. We may
assume (by scaling) that it is a probability measure, so it is the ergodic
circulation of a Markov space. From every point $u\in J$, we can start an
infinite random walk $(v^0=u, v^1,\dots)$, and also an infinite random walk
$(v^0=u, v^{-1},\dots)$ of the reverse chain. Choosing $u$ from $\pi$, this
gives us a probability distribution $\beta$ on rooted two-way infinite
(possibly periodic) sequences, i.e., on $J^{\Z}$. However, it seems to be
difficult to reconstruct the circulation $\alpha$ from $\beta$.
\end{remark}

\section{Multicommodity measures}

\subsection{Metrical linear functionals}

A bounded linear functional $\DD$ on $\ca(\AA^2)$ will be called {\it
metrical}, if it satisfies the following conditions:

\smallskip

(a) $\DD(\mu)=0$ for every measure $\mu\in\ca(\AA^2)$ concentrated on the
diagonal $\Delta=\{(x,x):~x\in J\}$;

\smallskip

(b) $\DD(\mu)=\DD(\mu^*)$ for every measure $\mu\in\ca(\AA^2)$;

\smallskip

(c) $\DD(\kappa^{12})+\DD(\kappa^{23})\ge\DD(\kappa^{13})$ for every measure
$\kappa\in\ca_+(\AA^3)$.

These conditions imply that $\DD$ is nonnegative on nonnegative measures.
Indeed, for a measure $\mu\in\ca_+(\AA^2)$ and an arbitrary probability
distribution $\gamma$ on $\AA$, define $\kappa=(\mu+\mu^*)\times\gamma$. Then
$\kappa^{12}=\mu+\mu^*$ and $\kappa^{13}=\kappa^{23}=
(\mu^1+\mu^2)\times\gamma$. Applying (c), we get that $\DD(\mu)+\DD(\mu^*)+
\DD((\mu^1+\mu^2)\times\kappa)\ge \DD((\mu^1+\mu^2)\times\kappa)$, and (b)
implies that $\DD(\mu)\ge 0$.

The name ``metrical'' refers to the fact that if $\DD$ is defined by a bounded
measurable pseudometric $r$ on $J$ as $\DD(\mu)=\mu(r)$, then conditions
(a)-(c) are satisfied. Conditions (a) and (b) are trivial, and condition (c)
also follows easily:
\[
\DD(\kappa^{12})+\DD(\kappa^{23})-\DD(\kappa^{13}) = \kappa^{12}(r) + \kappa^{23}(r) -\kappa^{13}(r)
= \kappa(r(y,z)+r(y,z)-r(x,z)) \ge 0.
\]

Can every metrical linear functional $\DD$ be represented as
$\DD(\varphi)=\varphi(g)$ with some pseudometric $g:~J^2\to\R_+$? I expect that
the answer is negative, but perhaps the following is true:

\begin{conjecture}\label{CONJ:METRICAL}
For every metrical linear functional $\DD$ on $\ca(\AA^2)$ and every
$\psi\in\ca_+(\AA^2)$ there is a pseudometric $g:~J^2\to\R_+$ such that
$\DD(\varphi)=\varphi(g)$ for all measures $\varphi\ll\psi$.
\end{conjecture}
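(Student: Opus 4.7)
First I would set up a symmetric, normalized representing function. By replacing $\psi$ with $\psi+\psi^*$ we may assume $\psi$ is symmetric. Proposition~\ref{PROP:MEAS-REP} then yields a bounded measurable $h:J\times J\to\R$ with $\DD(\varphi)=\varphi(h)$ for every $\varphi\ll\psi$. Property~(b) applied to measures $\mu\ll\psi$ forces $h=h^*$ $\psi$-a.e.; property~(a) forces $h=0$ $\psi$-a.e.\ on the diagonal $\Delta$; and the nonnegativity of $\DD$ on $\psi$-absolutely continuous nonnegative measures forces $h\ge 0$ $\psi$-a.e. Altering $h$ on a $\psi$-null set and redefining it on $\Delta$, I obtain a bounded measurable $g_0:J\times J\to\R_+$ that is symmetric, vanishes on $\Delta$, and still satisfies $\DD(\varphi)=\varphi(g_0)$ for every $\varphi\ll\psi$.

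Next I would form the pseudometric induced by $g_0$ via its shortest-path closure:
\[
g(x,y)=\inf\Bigl\{\sum_{i=0}^{n-1}g_0(z_i,z_{i+1}):\;n\ge 1,\;z_0=x,\;z_n=y,\;z_1,\dots,z_{n-1}\in J\Bigr\}.
\]
By construction $g$ is nonnegative and symmetric, $g(x,x)=0$, and $g$ satisfies the triangle inequality, so $g$ is a pseudometric on $J$. For each fixed $n$, the infimum over $(z_1,\dots,z_{n-1})\in J^{n-1}$ of a bounded Borel function is upper analytic (as a projection along the standard Borel space $J^{n-1}$) and therefore universally measurable; taking the countable infimum over $n$ shows $g$ is universally measurable, hence $\psi$-measurable. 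Since $n=1$ gives $g\le g_0$, we have $\varphi(g)\le\varphi(g_0)=\DD(\varphi)$ for every $\varphi\ll\psi$.

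It remains to show the reverse inequality, which reduces to the claim that $g=g_0$ $\psi$-almost everywhere. I would proceed by contradiction: if the claim fails, then for some $\eps>0$ and integer $n$ the set $E=\{(x,y):\exists z_1,\dots,z_{n-1}\in J,\;\sum_{i}g_0(z_i,z_{i+1})<g_0(x,y)-\eps\}$ has positive $\psi$-measure; by the Jankov--von Neumann measurable uniformization theorem I can select witnesses $\sigma_1,\dots,\sigma_{n-1}:E\to J$ that are $\psi$-measurable. Pushing $\psi_E$ forward along $(x,y)\mapsto(x,\sigma_1(x,y),\dots,\sigma_{n-1}(x,y),y)$ yields a measure $\kappa\in\ca_+(\AA^{n+1})$ with end-marginal $\kappa^{1,n+1}=\psi_E$ and with $\int\bigl(\sum_i g_0(z_i,z_{i+1})-g_0(z_0,z_n)\bigr)\,d\kappa<0$. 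Iterating property~(c) gives $\sum_i\DD(\kappa^{i,i+1})\ge\DD(\kappa^{1,n+1})$, which is the desired contradiction provided each term $\DD(\kappa^{i,i+1})$ coincides with $\kappa^{i,i+1}(g_0)$.

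This last provision is the main obstacle. Proposition~\ref{PROP:MEAS-REP} only supplies $\DD(\nu)=\nu(g_0)$ for $\nu\ll\psi$, and the intermediate marginals $\kappa^{i,i+1}$ need not be absolutely continuous with respect to $\psi$. Enlarging the dominating measure to $\Psi=\psi+\sum_i\kappa^{i,i+1}$ produces a representing function $g_\Psi$ for $\DD|_{\ll\Psi}$, but $g_\Psi$ need only agree with $g_0$ on a $\psi$-co-null set, and that exceptional set may carry positive $\kappa^{i,i+1}$-mass. Closing this gap would require either pinning down a ``canonical'' bounded measurable representative of $\DD$ valid against all marginals arising in the argument, or constructing the selection $\sigma$ so that the pushforward marginals are forced to remain $\psi$-absolutely continuous; this is presumably why the statement is left as a conjecture in the paper.
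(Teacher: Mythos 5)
Note first that this statement appears in the paper as a \emph{conjecture}: the author offers no proof, remarking only that it can be verified in several special cases (graphons and graphings), with details to appear elsewhere. So there is no proof in the paper to compare your argument against, and you should not expect to find one.

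Your setup is sound as far as it goes. Symmetrizing $\psi$, invoking Proposition~\ref{PROP:MEAS-REP}, and then using properties (a), (b) together with the nonnegativity of $\DD$ on nonnegative measures does yield a bounded Borel $g_0\ge 0$ that is symmetric, vanishes on the diagonal, and satisfies $\DD(\varphi)=\varphi(g_0)$ for all $\varphi\ll\psi$; passing to the shortest-path closure $g$ is the natural move, and handling the measurability of $g$ and of the witness set $E$ via projections of Borel sets and Jankov--von Neumann uniformization is a reasonable route. But the obstacle you flag at the end is the real one, not a technicality. Proposition~\ref{PROP:MEAS-REP} pins down $g_0$ only up to a $\psi$-null set, whereas the iterated form of (c) has to be tested against the intermediate marginals $\kappa^{i,i+1}$ of the pushforward, and these are concentrated on graphs of the selection maps $\sigma_i$ and will in general be singular with respect to $\psi$. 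Enlarging the dominating measure to $\Psi=\psi+\sum_i\kappa^{i,i+1}$ produces a representative $g_\Psi$ that agrees with $g_0$ only $\psi$-a.e., so nothing forces $\kappa^{i,i+1}(g_\Psi)=\kappa^{i,i+1}(g_0)$; and the witnesses you would then extract for $g_\Psi$ generate yet further marginals, with no evident fixed point to the regress. What is really needed is a single canonical representative of $\DD$ that is simultaneously a pointwise pseudometric, and the abstract duality of $L_1$ gives no control over the exceptional null set. You have correctly identified that this is exactly what is missing; your proposal is best read as a precise articulation of why the statement remains a conjecture rather than as a proof of it.
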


The conjecture can proved in several special cases, in particular, for measures
$\psi$ defined by graphons and graphings (details will be published elsewhere).

We need a lemma relating metrical functionals and flows. Informally, the lemma
expresses that in a flow, every particle must travel at least as much as the
distance between its starting and ending points.

\begin{lemma}\label{LEM:SHORTCUT}
Let $\DD$ be a metrical linear functional on $\ca(\AA^2)$, and let
$\tau\in\ca_+(\BB)$. Then $\DD(E(\tau))\ge\DD(Z(\tau))$.
\end{lemma}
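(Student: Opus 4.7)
The plan is to reduce to walks of a fixed number of edges, exploit condition (c) as a triangle-type inequality to telescope along each walk, and then sum over walk lengths. Since $\BB = \AA \oplus \AA^2 \oplus \dots$, the measure $\tau$ decomposes as $\tau = \sum_{m\ge 0} \tau_m$ with $\tau_m = \tau|_{J^{m+1}}$ concentrated on walks with exactly $m$ edges. Because $E$ and $Z$ are linear in $\tau$ and $\DD$ is linear, it suffices to prove the inequality for each $\tau_m$ separately. For $m=0$, the walks are single points $(u^0)$ and $Z(\tau_0)$ is concentrated on the diagonal $\Delta$, so $\DD(Z(\tau_0))=0$ by condition (a), while $E(\tau_0)=0$.

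For $m \ge 1$, for each $0 \le k \le l \le m$ let $\tau_m^{k,l} \in \ca_+(\AA^2)$ denote the pushforward of $\tau_m$ under the (measurable) map $(u^0,\dots,u^m)\mapsto (u^k,u^l)$. Then $E(\tau_m) = \sum_{k=0}^{m-1}\tau_m^{k,k+1}$ and $Z(\tau_m) = \tau_m^{0,m}$, so the goal becomes
\[
\DD(\tau_m^{0,m}) \le \sum_{k=0}^{m-1}\DD(\tau_m^{k,k+1}).
\]

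The key step is the following triangle inequality for $\DD$ along walks: for any $0 \le k < l < l' \le m$,
\[
\DD(\tau_m^{k,l}) + \DD(\tau_m^{l,l'}) \ge \DD(\tau_m^{k,l'}).
\]
To prove this, let $\kappa \in \ca_+(\AA^3)$ be the pushforward of $\tau_m$ under $(u^0,\dots,u^m)\mapsto(u^k,u^l,u^{l'})$. Then by inspection $\kappa^{12} = \tau_m^{k,l}$, $\kappa^{23} = \tau_m^{l,l'}$, and $\kappa^{13} = \tau_m^{k,l'}$, so the inequality is exactly condition (c) applied to $\kappa$. Iterating this along the chain $0 < 1 < 2 < \cdots < m$ (i.e.\ replacing $\tau_m^{0,m}$ by $\tau_m^{0,1} + \tau_m^{1,m}$, then $\tau_m^{1,m}$ by $\tau_m^{1,2}+\tau_m^{2,m}$, and so on) gives the desired fixed-length inequality. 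Summing over $m$ finishes the argument.

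The main obstacle is a technical one rather than a conceptual one: $E(\tau)$ need not be a finite measure in general, so $\DD(E(\tau))$ must be interpreted as a value in $[0,\infty]$. This is harmless, since $\DD$ is nonnegative on nonnegative measures (as noted right after the definition of metrical functional), and the fixed-length inequalities $\DD(Z(\tau_m)) \le \DD(E(\tau_m))$ sum monotonically. If desired one can first prove the result assuming $\tau$ is supported on walks of length $\le N$ and then let $N\to\infty$; both sides converge monotonically to the correct values.
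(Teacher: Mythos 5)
Your proof is correct and follows essentially the same route as the paper: decompose $\tau$ by walk length, apply condition (c) as a triangle inequality along each walk to telescope $\DD(\tau_m^{0,m})\le\sum_k\DD(\tau_m^{k,k+1})$, and sum over lengths; the paper phrases the telescoping as an induction on $j-i$ with an arbitrary intermediate index $r$, but this is the same argument. Your closing remark about $E(\tau)$ possibly failing to be a finite measure is a legitimate technical point that the paper leaves implicit (the lemma is only invoked where $E(\tau)\le\psi+\beta$ is finite), and your monotone-limit fix handles it cleanly.
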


\begin{proof}
Let $\tau_k$ denote the measure $\tau$ restricted to sequences in $\BB$ of
length $k$ $(k\ge 1)$. For $0\le i_1<i_2<\dots<i_m<k$, the measure
$\tau_k^{i_1\dots i_m}$ is the marginal of $\tau_k$ on $\{i_1,\dots,
i_m\}\subseteq\{1,\dots,k\}$. For $i\le j$, let $[i,j]=\{i,i+1,\dots,j\}$. Then
$Z(\tau) = \sum_{k\ge0} \tau_k^{0,k-1}$.

We claim that
\begin{equation}\label{EQ:PATH1}
\DD(E(\tau_k^{[i,j]}))\ge\DD(E(\tau_k^{ij}))\qquad(0\le i<j<k).
\end{equation}

We use induction on $j-i$. For $j-i=1$ the assertion is trivial. Let $j-i>1$,
and choose $r$ with $i<r<j$. Then
\[
E(\tau_k^{irj})^{23} = E(\tau_k^{rj}),\quad E(\tau_k^{ij})^{13}
= E(\tau_k^{ij}),\quad E(\tau_k^{irj})^{12} = E(\tau_k^{ir}).
\]
Using that $\DD$ is metrical, this implies that
\[
\DD(E(\tau_k^{ir}))+\DD(E(\tau_k^{rj})) \ge \DD(E(\tau_k^{ij})).
\]
By induction, we know that $\DD(E(\tau_k^{[i,r]}))\ge\DD(E(\tau_k^{ir}))$ and
$\DD(E(\tau_k^{[r,j]}))\ge\DD(E(\tau_k^{rj}))$. Using that
$E(\tau_k^{[i,r]})+E(\tau_k^{[r,j]})=E(\tau_k^{[i,j]})$, we get
\[
\DD(E(\tau_k^{[i,j]})) = \DD((\tau_k^{[i,r]}))+\DD(E(\tau_k^{[r,j]}))
\ge \DD(E(\tau_k^{ir}))+\DD(E(\tau_k^{rj})) \ge \DD(E(\tau_k^{ij})).
\]
This proves the Claim. In particular, we have
\begin{equation}\label{EQ:PATH4}
\DD(E(\tau_k))= \DD(E(\tau_k^{[0,k-1]})) \ge\DD(E(\tau_k^{0,k-1})) = \DD(Z(\tau_k)).
\end{equation}
Thus
\[
\DD(\tau) = \sum_{k=1}^\infty \DD(E(\tau_k))
\ge\sum_{k=0}^\infty \DD(Z(\tau_k)) =\DD(Z(\tau)).\qedhere
\]
\end{proof}

\subsection{Multicommodity flows}

A {\it multicommodity flow} on a Borel space $\AA$ consists of a symmetric
measure $\sigma\in\ca_+(\AA^2)$, and of a family of $s$-$t$ flows
$\varphi_{st}$ of value $1$, one for each pair $(s,t)\in J\times J$. We require
that $\varphi_{st}(U)$ is measurable as a function of $(s,t)\in J\times J$ for
every $U\in\AA^2$.

Since we are going to put only symmetric upper bounds (capacity constraints) on
the sum of these flows, we may also require that each $\varphi_{st}$ is
acyclic. A further requirement we can impose is that
$\varphi_{ts}=\varphi_{st}^*$ (replacing each $\varphi_{st}$ by
$(\varphi_{st}+\varphi_{ts}^*)/2$).

Such a multicommodity flow $F=(\sigma;~f_{st}:~st \in W)$ defines symmetric
measure (the {\it total load}) by
\[
\varphi_F(S)=\int\limits_{J\times J} \varphi_{xy}(S)\,d\sigma(x,y)\qquad (S\in\AA^2).
\]
A trivial multicommodity flow is defined by $f_{st}=\delta_{st}$ for any
$\sigma$. The total load of this trivial multicommodity flow is $\sigma$.

If we are also given a symmetric ``capacity'' measure $\psi\in\ca_+(\AA^2)$,
then we say that the multicommodity flow $F=(\sigma;~\varphi_{st})$ is {\it
feasible}, if $\varphi_F\le\psi$. Our question is: Given $\psi$ and $\sigma$,
does there exist a feasible multicommodity flow? Our goal is to generalize the
Multicommodity Flow Theorem.

To state our main result in this section, we need to relax the capacity
constraint $\varphi_F\le\psi$, and define the {\it overload over $\psi$} as
$\|\varphi_F\setminus\psi\|$. In other words, this overload is less than $\eps$
if there is a measure $\psi'\in\ca_+(\AA^2)$ such that $\|\psi-\psi'\|<\eps$
and $F$ is feasible with respect to $\psi'$.

\begin{theorem}[Multicommodity Flow Theorem for Measures]\label{THM:MULTI-COMM-M}
Let $\sigma$ and $\psi$ be symmetric measures on $\AA^2$. There is a feasible
multicommodity flow for demands $\sigma$ with arbitrarily small overload over
$\psi$ if and only if $\DD(\sigma)\le\DD(\psi)$ for every metrical linear
functional $\DD$ on $\ca_+(\AA^2)$.
\end{theorem}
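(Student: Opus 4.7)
The plan is to handle necessity and sufficiency separately, with both relying on the equivalence, furnished by Theorem~\ref{THM:DEMAND-PATH}, between multicommodity loads with demand $\sigma$ and measures $\tau\in\ca_+(\BB)$ on walks satisfying $Z(\tau)=\sigma$ and $E(\tau)=\varphi_F$. For necessity, given a feasible flow with overload $\eps=\|\varphi_F\setminus\psi\|$, assemble the per-pair walk decompositions into a single $\tau$ of this form. Lemma~\ref{LEM:SHORTCUT} yields $\DD(\sigma)\le\DD(\varphi_F)$, and the split $\varphi_F=(\varphi_F\land\psi)+(\varphi_F\setminus\psi)$ combined with $\DD\ge 0$ on $\ca_+(\AA^2)$ gives $\DD(\varphi_F)\le\DD(\psi)+\|\DD\|\,\eps$. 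Letting $\eps\to 0$ proves $\DD(\sigma)\le\DD(\psi)$.

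Sufficiency will be proved by contrapositive via Hahn--Banach. Suppose there is an $\eps_0>0$ such that no feasible flow has overload less than $\eps_0$. Form the convex sets
\[
\FF=\{E(\tau):\tau\in\ca_+(\BB),\ Z(\tau)=\sigma\},\qquad
\MM=\{\psi+\eta-\rho:\eta,\rho\in\ca_+(\AA^2),\ \|\eta\|<\eps_0\}.
\]
Here $\MM$ is open (since $\mu\in\MM$ iff $\|(\mu-\psi)_+\|<\eps_0$, a continuous condition) and disjoint from $\FF$ by hypothesis, while $\sigma\in\FF$ (trivial flow $\varphi_{st}=\delta_{(s,t)}$) and $\psi\in\MM$. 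Hahn--Banach then supplies a bounded linear functional $\LL$ on $\ca(\AA^2)$ and a constant $c$ with $\LL<c$ on $\MM$ and $\LL\ge c$ on $\FF$. Scaling arguments on the two generators of $\MM$ force $\LL\ge 0$ on $\ca_+(\AA^2)$, $\|\LL\|<\infty$, and $\LL(\psi)<c\le\LL(\sigma)$. Since $\FF$, $\MM$, $\sigma$ and $\psi$ are each invariant under $\mu\mapsto\mu^*$, replacing $\LL$ by the symmetrized functional $\mu\mapsto(\LL(\mu)+\LL(\mu^*))/2$ preserves every property listed.

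The crux is to upgrade $\LL$ to a genuinely metrical functional $\DD$ satisfying $\DD(\sigma)>\DD(\psi)$. For $\varphi\in\ca_+(\AA^2)$ set
\[
\DD(\varphi)=\inf\{\LL(E(\tau)):\tau\in\ca_+(\BB),\ Z(\tau)=\varphi\},
\]
and extend to signed measures by Jordan decomposition, following the pattern of Lemma~\ref{LEM:FUNCT-SUP}. Linearity of $\DD$ on $\ca_+(\AA^2)$ comes from subadditivity (summing two $\tau$'s) together with a Radon--Nikodym splitting $\tau_i=(d\varphi_i/d\varphi)\cdot\tau$ of a given $\tau$ realizing $\varphi_1+\varphi_2$; boundedness follows from $0\le\DD\le\LL$ on $\ca_+$. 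Axiom (a) holds because the pushforward of a diagonal measure along $x\mapsto(x)$ is a length-$0$-walk measure with $E(\tau)=0$; (b) uses walk reversal together with the symmetry of $\LL$; (c), the triangle inequality, is obtained by disintegrating near-optimal $\tau_1$ for $\kappa^{12}$ and $\tau_2$ for $\kappa^{23}$ over their endpoint marginals via Proposition~\ref{PROP:DISINT}, and then concatenating walks at the common middle vertex while drawing the triple $(x,y,z)$ from $\kappa$; the resulting $\tau_3$ satisfies $Z(\tau_3)=\kappa^{13}$ and $E(\tau_3)=E(\tau_1)+E(\tau_2)$. Finally $\DD(\sigma)\ge c>\LL(\psi)\ge\DD(\psi)$ (the last inequality by taking $\tau=\psi$ viewed on length-$1$ walks), contradicting the hypothesis.

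The main obstacle is the construction and analysis of $\DD$, particularly the measure-theoretic execution of the walk concatenation used for the triangle inequality and the verification that the extension from $\ca_+(\AA^2)$ to signed measures by Jordan decomposition preserves all three metricality axioms. The Hahn--Banach separation itself is routine once $\FF$ and $\MM$ are identified correctly.
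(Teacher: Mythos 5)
Your approach is correct, but it takes a genuinely different route from the paper's. The paper encodes a multicommodity flow as a single \emph{load measure} $\Phi\in\ca_+(\AA^4)$ (via $\Phi(T\times U)=\int_U\varphi_{st}(T)\,d\sigma$), does a three-way Hahn--Banach separation in $\ca(\AA^4)$ of the affine set $\{\Phi^{134}-\Phi^{234}=\overline{\sigma}\}$, the cone $\ca_+(\AA^4)$, and the capacity set $\{\Phi^{12}\le\psi\}$, and then manufactures the metrical functional $\QQ$ by a $\sup$ over measures with prescribed first marginal (Lemma~\ref{LEM:FUNCT-SUP}), the triangle inequality coming from a gluing of two such measures along a shared coordinate. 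You instead stay in $\ca(\AA^2)$, separate the convex set $\FF$ of achievable total loads (parameterized via walk measures $\tau$ with $Z(\tau)=\sigma$) from the open relaxed capacity set $\MM$, and build the metrical functional as the induced ``walk metric'' $\DD(\varphi)=\inf\{\LL(E(\tau)):Z(\tau)=\varphi\}$; the triangle inequality then comes from walk concatenation over the middle coordinate of $\kappa$, which is morally the same gluing but phrased combinatorially. What your route buys: it works in the lower-dimensional space $\ca(\AA^2)$ and needs only a two-set separation, and the metricality of $\DD$ is transparently the ``shortest path'' intuition. What it costs: you must verify that $\FF$ (suitably interpreted, e.g.\ with the implicit finiteness of $E(\tau)$) is convex and coincides with the set of total loads, which requires a measurable disintegration of $\tau\in\ca_+(\BB)$ over its endpoint pair --- this is a standard-Borel analogue of Proposition~\ref{PROP:DISINT} and is a genuine (if routine) thing to check, whereas the paper's $\Phi$-encoding avoids disintegrating over walks entirely in the ``if'' direction. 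Both proofs share the same reliance on Theorem~\ref{THM:DEMAND-PATH} and Lemma~\ref{LEM:SHORTCUT} for necessity, and both leave the measurability in $(s,t)$ of the path decomposition $\kappa_{st}$ implicit. Your linearity-of-$\DD$ argument (superadditivity via Radon--Nikodym splitting of a near-optimal $\tau$, subadditivity by summing) and the Jordan extension correctly mirror Lemma~\ref{LEM:FUNCT-SUP} with a sign flip, and the final contradiction $\DD(\sigma)\ge c>\LL(\psi)\ge\DD(\psi)$ closes the argument.
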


I don't know whether allowing an arbitrarily small overload is needed (probably
so). If Conjecture \ref{CONJ:METRICAL} above is true, then the condition
$\DD(\sigma)\le\DD(\psi)$ could be replaced by the more explicit condition that
$\sigma(d)\le\psi(d)$ for every bounded Borel pseudometric $d$ on $J$.

A {\it cut-metric} is perhaps the simplest nontrivial pseudometric, defined as
$d(x,y)=\one_{A\times A^c}+\one_{A^c\times A}$. For cut-metrics, the condition
$\DD(\sigma)\le\DD(\psi)$ in the theorem gives that $\sigma(A\times A^c)\le
\psi(A\times A^c)$. If the demand measure $\sigma$ is concentrated on a single
pair $\{s,t\}$ of nodes (more exactly, on the two orderings of an unordered
pair), then we obtain Theorem \ref{THM:MULTI-COMM-M} (at least in the case of
symmetric capacities). But in general, it does not suffice to apply the
condition to cut-metrics only, even in the finite case.

\subsubsection{Formulation as a single measure}

We want to formulate the multicommodity flow problem in terms of a single
measure; unfortunately, we have to go up to $\AA^4$. If $\Phi\in\ca(\AA^4)$,
then we use the notation
\[
\Phi^*(T\times U)=\Phi(T^*\times U),\quad \Phi^{**}(T\times U)=\Phi(T^*\times U^*),
\quad \Phi^{\circ *}(T\times U)=\Phi(T\times U^*).
\]

Every multicommodity flow $(\sigma;~\varphi_{st}:~s,t\in J)$ defines a {\it
load measure} $\Phi$ on $\AA^4 = \AA^2\times\AA^2$ by
\[
\Phi(T\times U) = \int\limits_U \varphi_{st}(T)\,d\sigma(s,t).
\]
This number expresses how much load the subset of demands $U$ puts on the edges
in $T$. For the trivial solution $\varphi_{st}=\delta_{st}$ (sending the stuff
directly from $s$ to $t$) we get
\[
\int\limits_U \delta_{xy}(T)\,d\sigma(x,y) = \sigma(T\cap U).
\]
Sometimes it will be convenient to consider the right hand side as a measure
$\sigma_\Delta(T\times U)=\sigma(T\cap U)$ defined on $\AA^4$. Of course, this
trivial solution is not feasible in general.

We can express the multicommodity flow problem in terms of this single measure
$\Phi$. The condition that $\varphi_{st}^*=\varphi_{ts}$ can be expressed as
$\Phi(T\times U)=\Phi(T^*\times U^*)$, or more compactly,
\begin{equation}\label{EQ:L-SYM1}
\Phi^{**}=\Phi.
\end{equation}
The fact that $\varphi_{st}-\delta_{st}$ is a circulation implies that
\[
\varphi_{st}^1(A) - \varphi_{st}^2(A)= \delta_{st}^1(A) -\delta_{st}^2(A) = \delta_s(A)-\delta_t(A) \qquad(A\in\AA).
\]
Integrating over $U\in\AA^2$ with respect to $\sigma$, we get that
\begin{equation}\label{EQ:FLO-M-LOAD2}
\Phi^{134}-\Phi^{234} = \overline{\sigma},
\end{equation}
where $\overline{\sigma}(A\times U) = \sigma((A\times J)\cap U) -
\sigma((J\times A)\cap U)$.

Finally, the feasibility conditions mean that $\Phi\ge 0$ and $\Phi(A\times
J\times J)\le \psi(A)$, which, using our notation, can be expressed as
\begin{equation}\label{EQ:FLO-M-FEAS}
\Phi\ge 0, \qquad \Phi^{12} \le \psi.
\end{equation}

Our next observation is that we can forget about condition \eqref{EQ:L-SYM1}.
Indeed, suppose that $\Phi\in\AA^4$ satisfies \eqref{EQ:FLO-M-LOAD2} and
\eqref{EQ:FLO-M-FEAS}. Then the measure $\Phi^{**}$ also satisfies these
conditions, and the symmetrized measure $\frac12(\Phi+\Phi^{**})$ satisfies
these equations and, in addition, \eqref{EQ:L-SYM1} as well.

Conversely, we show that every measure $\Phi$ satisfying \eqref{EQ:FLO-M-LOAD2}
and \eqref{EQ:FLO-M-FEAS} yields a feasible multicommodity flow.

We may assume that $\Phi^{34}\ll \sigma$. Suppose this does not hold, then let
$S\in\AA^2$ be a set with $\sigma(S)=0$ and $\Phi^{34}(S)$ maximum (such a set
clearly exists). Define $\Phi_1=\Phi_{J^2\times (J^2\setminus S)}$ and
$\Phi_2=\Phi_{J^2\times S}$, then $\Phi=\Phi_1+\Phi_2$. We claim that
$\Phi_1\ll \sigma$. Indeed, for $X\subseteq J^2$ with $\sigma(X)=0$ we have
$\sigma(X\cup S)=0$, hence $\Phi^{34}(X\cup S)\le \Phi^{34}(S)$, which implies
that $\Phi^{34}_1(X)=\Phi^{34}(X\setminus S) =\Phi^{34}(X\setminus S)=0$.

Furthermore, $\Phi_1$ satisfies \eqref{EQ:FLO-M-LOAD2} and
\eqref{EQ:FLO-M-FEAS}. The second of these is trivial. For the first,
\begin{align*}
\Phi_1^{134}(A\times U)&-\Phi_1^{234}(A\times U) = \Phi_1(A\times J\times U)-\Phi_1(J\times A\times U)\\
&=\Phi(A\times J\times (U\setminus S))-\Phi(J\times A\times (U\setminus S))\\
&=\overline{\sigma}(A\times (U\setminus S)) = \sigma((A\times J)\cap (U\setminus S)) - \sigma((J\times A)\cap(U\setminus S))\\
& = \sigma((A\times J)\cap U) - \sigma((J\times A)\cap U) = \overline{\sigma}(A\times U)
\end{align*}
(we have used that $\sigma(S)=0$). Replacing $\Phi$ by $\Phi_1$ we get a
solution of \eqref{EQ:FLO-M-LOAD2} and \eqref{EQ:FLO-M-FEAS} such that
$\Phi^{34}\ll\sigma$. Thus the Radon--Nikodym derivative $f=d\Phi^{34}/d\sigma$
exists.

The Disintegration Theorem \ref{PROP:DISINT} implies that there is a family
$(\theta_{st}:~s,t\in J)$ of measures on $\AA^2$ such that $\theta_{st}(U)$ is
a measurable function of $(s,t)$ for every $U\in\AA^2$, and
\begin{equation}\label{EQ:PHISTU0}
\Phi(T\times U)=\int\limits_U \theta_{st}(T)\,d\Phi^{34}(s,t).
\end{equation}
for $T,U\in\AA^2$. Defining $\varphi_{st}= f(s,t)\cdot\theta_{st}$, equation
\eqref{EQ:PHISTU0} can be written as
\begin{equation}\label{EQ:PHISTU}
\Phi(T\times U)=\int\limits_U \varphi_{st}(T)\,d\sigma(s,t).
\end{equation}
Let $A\in\AA$ and $U\in\AA^2$, then
\begin{align*}
\int\limits_U(\varphi_{st}^1(A)&-\varphi_{st}^2(A))\,d\sigma(s,t) = \Phi^{134}(A\times U)-\Phi^{234}(A\times U)\\
&= \overline{\sigma}(A\times U) = \sigma((A\times J)\cap U)-\sigma((J\times A)\cap U)
=\int_U \one_{A\times J}-\one_{J\times A}\,d\sigma.
\end{align*}
This holds for every $U\in\AA$, so it follows that for all $A\in\AA$,
\begin{equation}\label{EQ:VFI-FLOW}
\varphi_{st}^1(A)-\varphi_{st}^2(A) = \one_{J\times A}(s,t)-\one_{A\times J}(s,t) = \delta_s(A)-\delta_t(A),
\end{equation}
holds for $\sigma$-almost all $(s,t)$. We need to argue that for
$\sigma$-almost all $(s,t)$, equation \eqref{EQ:VFI-FLOW} holds for all $A$.

Let $R_A$ denote the set of pairs $(s,t)$ for which \eqref{EQ:VFI-FLOW} does
not hold. Let $\{A_1,A_2,\dots\}$ be a countable set algebra generating $\AA$.
Then $R=\cup_i R_{A_i}$ has $\sigma(R)=0$ and if $(s,t)\notin R$, then
\[
\varphi_{st}^1(A_i)+\delta_t(A_i)=\varphi_{st}^2(A_i) + \delta_s(A_i).
\]
By the uniqueness of measure extension, this equality holds if we replace $A_i$
by any $A\in\AA$. This shows that $\varphi_{st}$ is an $s$-$t$ flow of value
$1$. Replacing $\varphi_{st}$ by $\delta_{st}$ for $(s,t)\in R$, we may assume
that $\varphi_{st}$ is an $s$-$t$ flow of value $1$ for every $s$ and $t$.

Equation \eqref{EQ:PHISTU} implies that
\[
\int_J\varphi_{st}(T)\,d\sigma(s,t) = \Phi(T\times J)\le \psi(T),
\]
so this multicommodity flow is feasible. If $\Phi$ violates the second
inequality in \eqref{EQ:FLO-M-FEAS} slightly, meaning that
$\|\Phi^{12}\setminus\psi\|=\eps>0$, then by a similar computation the
multicommodity flow we constructed has an overload of $\eps$.

To sum up, it suffices to find a measure $\Phi\in\ca_+(\AA^4)$ such that
$\Phi^{134}-\Phi^{234} =  \overline\sigma$ and $\|\Phi^{12} \setminus
\psi\|\le\eps$.

\subsubsection{Proof of the Multicommodity Flow Theorem}

I. {\it The ``only if'' direction.} Consider a multicommodity flow
$F=(\varphi^{uv}:~uv\in S)$, serving demand $\sigma$ and with overload over
$\psi$ less than $\eps$ ($\eps>0$). We may assume that $\sigma$ is a
probability distribution. By Theorem \ref{THM:DEMAND-PATH}, there is a
probability distribution $\kappa_{uv}$ on $u$-$v$ paths for every $uv\in S$
such that $E(\kappa_{uv})=\varphi_{uv}$. Let $\tau$ be the mixture of the
$\kappa_{uv}$ by $\sigma$; in other words, we generate a random path from
$\tau$ by selecting a random pair $uv$ from $\sigma$, and then select a random
path from $\kappa_{uv}$. Then $E(\tau) = \varphi_F$ and $Z(\tau)=\sigma$. By
the definition of overload, we have $\varphi_F\le\psi+\beta$, where
$\|\beta\|\le\eps$. By Lemma \ref{LEM:SHORTCUT},
\[
\DD(\sigma)=\DD(Z(\tau)) \le \DD(E(\tau))=\DD(\varphi_F)
\le \DD(\psi) + \DD(\beta) \le \DD(\psi)+\|\DD\|\eps.
\]
Since $\eps$ can be arbitrarily small, this proves that
$\DD(\sigma)\le\DD(\psi)$.

\medskip

II. {\it The ``if'' direction}. Consider the convex sets of measures
\begin{align*}\label{EQ:LOAD2}
\Hf_1&=\{\Phi\in\ca(\AA^4):~\Phi^{134}-\Phi^{234}=\overline{\sigma}\},\\
\Hf_2&=\ca_+(\AA^4),\\
\Hf_3&=\{\Phi\in\ca(\AA^4):~\Phi^{12}\le\psi\}.
\end{align*}
To make these sets open, let $\delta>0$, and consider the
$\delta$-neighborhoods
$\Hf_i^\delta=\{\mu\in\ca_+(\AA):~d_\tv(\mu,\Hf_i)<\delta\}$. Note that all
these sets are convex and invariant under the map $\Phi\mapsto \Phi^{**}$.

The main step in the proof is proving that
\begin{equation}\label{EQ:HXY}
\Hf_1^\delta\cap \Hf_2^\delta\cap \Hf_3^\delta\not=\emptyset.
\end{equation}
Suppose that this intersection is empty. The intersection of any two of these
sets is nonempty, so by Lemma \ref{LEM:HB-KN} there are bounded linear
functionals $\LL_1,\LL_2,\LL_3$ on $\ca(\AA^4)$ and real numbers $a_1,a_2,a_3$
such that $\LL_1+\LL_2+\LL_3=0$, $a_1+a_2+a_3=0$, and $\LL_i > a_i$ on
$\Hf_i^\delta$. Note that $0\in \Hf_2$ and $0\in \Hf_3$, which implies that
$a_2,a_3<0$, and hence $a_1>0$. Since the sets are invariant under the map
$\Phi\mapsto\Phi^{**}$, we may assume that the linear functionals $\LL_1,
\LL_2,\LL_3$ are invariant under this map as well.

These conditions have the following implications for the functionals $\LL_i$:

\smallskip

(a) The affine subspace $\Hf_1$ is not empty, since the trivial multicommodity
flow satisfies it. The condition that $\LL_1(\Phi)> a_1$ for $\Phi\in
\Hf_1^\delta$ implies that $\LL_1$ is constant on $\Hf_1$. Since $a_1>0$, this
constant is positive, and we may assume (by scaling the $\LL_i$ and the $a_i$)
that it is $1$. Then $a_1<1$. It follows that $\LL_1(\Phi)=0$ if
$\Phi^{134}=\Phi^{234}$.

We can apply Proposition \ref{PROP:TT-INV} to the linear operator
$\TT:~\Phi\mapsto\Phi^{134}-\Phi^{234}$ similarly as in the proof of Lemma
\ref{LEM:CIRC-DUAL}. We get a linear functional $\ZZ$ on $\ca(\AA^3)$ such that
\begin{equation}\label{EQ:LL-ZZ}
\LL_1(\Phi)=\ZZ(\Phi^{134}-\Phi^{234})\qquad(\Phi\in\ca(\AA^4)).
\end{equation}
Substituting the trivial multicommodity flow in \eqref{EQ:LL-ZZ}, we get that
$\ZZ(\overline{\sigma})=1$. It also follows that
\begin{equation}\label{EQ:LL1-ALT}
\LL_1(\Phi^*) = \ZZ((\Phi^*)^{134}-(\Phi^*)^{234}) = \ZZ(\Phi^{234}-\Phi^{134}) = - \LL_1(\Phi),
\end{equation}
and
\begin{equation}\label{EQ:LL1-ALT2}
\LL_1(\Phi^{\circ *}) = \LL_1((\Phi^{**})^*) = -\LL_1(\Phi^{**}) = - \LL_1(\Phi).
\end{equation}

\smallskip

(b) The condition that $\LL_2(\Phi)> a_2$ for $\Phi\in \Hf_2^\delta$ implies
that $\LL_2(\mu)\ge 0$ for $\mu\ge 0$, so $\LL_2$ is a nonnegative functional.

\smallskip

(c) The condition that $\LL_3(\Phi)> a_3$ for $\Phi\in \Hf_3^\delta$ implies
that $\LL_3(\mu)\ge 0$ whenever $\mu\in\ca(\AA^4)$ and $\mu^{12}\le 0$. This
implies that $\LL_3(\mu)=0$ whenever $\mu^{12}=0$. We can apply the Proposition
\ref{PROP:TT-INV} to the operator $\SS:~\varphi\mapsto\varphi^{12}$ similarly
as in (a); it is easy to see that the range of $\SS$ is the whole space
$\ca(\AA^2)$, so it is closed. We get a bounded linear functional $\RR$ on
$\ca(\AA^2)$ such that $\LL_3(\mu)=\RR(\mu^{12})$. It also follows that $-\RR$
is a nonnegative functional.

\smallskip

From $\LL_1+\LL_2+\LL_3=0$ we get that
\begin{equation}\label{EQ:RL}
\RR(\Phi^{12}) = -\LL_3(\Phi) = \LL_1(\Phi)+\LL_2(\Phi) \ge \LL_1(\Phi)
=\ZZ(\Phi^{134}-\Phi^{234}).
\end{equation}
for every $\Phi\in\ca_+(\AA^4)$. From the fact that $\psi\times\gamma\in \Hf_3$
for any probability measure $\gamma\in\ca(\AA^2)$, it follows that $\RR(\psi)<
- a_3 = a_1+a_2<1$.

By Lemma \ref{LEM:FUNCT-SUP}, there is a bounded linear functional $\QQ$ on
$\ca(\AA^2)$ such that
\[
\QQ(\mu)=\sup\{\LL_1(\Phi):~\Phi^{12}=\mu,~ \Phi\ge 0\}
= \sup\{\ZZ(\Phi^{134}-\Phi^{234}):~\Phi^{12}=\mu,~ \Phi\ge 0\}
\]
for all $\mu\ge 0$. Note that $\QQ(\mu)\le\RR(\mu)$ and
\begin{equation}\label{EQ:QZZPHI}
\QQ(\Phi^{12})\ge \ZZ(\Phi^{134}-\Phi^{234})
\end{equation}
for every $\Phi\ge0$. Also note that in the definition, the measure
$\Phi^{\circ *}$ also competes for the supremum, and since $\LL_1(\Phi^{\circ
*})=-\LL_1(\Phi)$, we can also write
\begin{equation}\label{EQ:Q-ABS}
\QQ(\mu)=\sup\{|\LL_1(\Phi)|:~\Phi^{12}=\mu,~ \Phi\ge 0\}\ge 0.
\end{equation}
We also have $\sigma_\Delta\ge 0$ and $(\sigma_\Delta)^{12}=\sigma$, and so
\begin{equation}\label{EQ:QQS1}
\QQ(\sigma)\ge \LL_1(\sigma_\Delta) = 1.
\end{equation}

\begin{claim}\label{CLAIM:R-MET}
The functional $\QQ$ is metrical.
\end{claim}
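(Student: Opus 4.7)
The plan is to verify the three defining conditions (a)--(c) for a metrical functional. Conditions (a) and (b) fall out quickly from properties of $\LL_1$ already established. For (a): if $\mu$ is concentrated on the diagonal $\Delta=\{(x,x):x\in J\}$, then any $\Phi\ge 0$ with $\Phi^{12}=\mu$ is concentrated on $\Delta\times J^2$, and a direct calculation shows $\Phi^{134}=\Phi^{234}$; thus $\LL_1(\Phi)=\ZZ(0)=0$, and combined with the nonnegativity from \eqref{EQ:Q-ABS} this gives $\QQ(\mu)=0$. For (b): the map $\Phi\mapsto\Phi^*$ is a bijection between nonnegative measures with 12-marginal $\mu$ and those with 12-marginal $\mu^*$, and by \eqref{EQ:LL1-ALT} we have $\LL_1(\Phi^*)=-\LL_1(\Phi)$, so formula \eqref{EQ:Q-ABS} forces $\QQ(\mu)=\QQ(\mu^*)$.

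The real work is (c). Given $\kappa\in\ca_+(\AA^3)$, I will prove the sharper statement that for every $\Phi_0\ge 0$ with $\Phi_0^{12}=\kappa^{13}$ there exist $\Phi_1,\Phi_2\ge 0$ with $\Phi_1^{12}=\kappa^{12}$, $\Phi_2^{12}=\kappa^{23}$, and $\LL_1(\Phi_1)+\LL_1(\Phi_2)=\LL_1(\Phi_0)$; taking the supremum over $\Phi_0$ then yields the triangle inequality for $\QQ$. The construction uses Proposition~\ref{PROP:DISINT} twice: first to disintegrate $\Phi_0$ over $\Phi_0^{12}=\kappa^{13}$, producing probability measures $\phi_{s,t}\in\ca_+(\AA^2)$ with $\Phi_0(T\times U)=\int_T\phi_{s,t}(U)\,d\kappa^{13}(s,t)$; second to disintegrate $\kappa$ over $\kappa^{13}$, producing probability measures $\nu_{s,t}$ on $J$ with $\kappa(A\times B\times C)=\int_{A\times C}\nu_{s,t}(B)\,d\kappa^{13}(s,t)$. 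Set
\begin{align*}
\Phi_1(A\times B\times U)&=\int \one_A(s)\,\one_B(z)\,\phi_{s,t}(U)\,d\kappa(s,z,t),\\
\Phi_2(A\times B\times U)&=\int \one_A(z)\,\one_B(t)\,\phi_{s,t}(U)\,d\kappa(s,z,t).
\end{align*}
A direct marginal computation (using $\phi_{s,t}(J^2)=1$ $\kappa^{13}$-a.e.)\ gives $\Phi_1^{12}=\kappa^{12}$ and $\Phi_2^{12}=\kappa^{23}$.

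The key algebraic point is a telescoping in $\ZZ$: introducing the auxiliary measure $\omega(A\times U)=\int\one_A(z)\phi_{s,t}(U)\,d\kappa(s,z,t)$, one checks that $\Phi_1^{134}=\Phi_0^{134}$, $\Phi_2^{234}=\Phi_0^{234}$, and $\Phi_1^{234}=\Phi_2^{134}=\omega$, whence
\[
\LL_1(\Phi_1)+\LL_1(\Phi_2)=\ZZ(\Phi_0^{134}-\omega)+\ZZ(\omega-\Phi_0^{234})=\ZZ(\Phi_0^{134}-\Phi_0^{234})=\LL_1(\Phi_0).
\]
The main obstacle I anticipated is finding the right $\Phi_1,\Phi_2$: the trivial multicommodity flows for demands $\kappa^{12}$ and $\kappa^{23}$, or naive projections of a five-dimensional coupling of $\Phi_0$ with $\kappa$, either miss the prescribed 12-marginal or produce an $\LL_1$-sum strictly smaller than $\LL_1(\Phi_0)$. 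The resolution is to keep the conditional demand structure $\phi_{s,t}$ of $\Phi_0$ but re-attach it to the edges $(s,z)$ and $(z,t)$ supplied by $\kappa$, so that the auxiliary measure $\omega$ appears twice with opposite signs and cancels under $\ZZ$.
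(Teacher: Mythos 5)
Your proof is correct, and at the structural level it follows the paper's strategy: conditions (a) and (b) are read off from the support and the $*$-(anti)symmetry of $\LL_1$, while condition (c) is obtained by lifting to a five-coordinate space and telescoping inside $\ZZ$. The differences are in the execution of (c). The paper takes a near-optimal competitor $\Phi$ for $\QQ(\kappa^{12})$, uses the common marginal $\Phi^{12}=\kappa^{12}$ to produce an abstract coupling $\Gamma\in\ca_+(\AA^{\{12345\}})$ with $\Gamma^{1234}=\Phi$, $\Gamma^{125}=\kappa$, telescopes $\ZZ(\Gamma^{134}-\Gamma^{234})=\ZZ(\Gamma^{134}-\Gamma^{345})+\ZZ(\Gamma^{345}-\Gamma^{234})$, bounds each term by \eqref{EQ:QZZPHI}, and lets the slack $\delta\to0$. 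You instead disintegrate $\Phi_0$ over $\Phi_0^{12}=\kappa^{13}$ to get the conditional measures $\phi_{s,t}$, reattach them along the two legs $(s,z)$ and $(z,t)$ furnished by $\kappa$ to produce explicit competitors $\Phi_1,\Phi_2$, and verify the exact cancellation $\LL_1(\Phi_1)+\LL_1(\Phi_2)=\LL_1(\Phi_0)$ via the auxiliary measure $\omega$. Your version is effectively the "conditionally independent" instance of the paper's coupling, made explicit; it is a little more self-contained (it invokes only the stated Disintegration Theorem rather than a gluing fact), achieves an equality rather than two one-sided estimates, and avoids the $\delta$-limiting argument. One small remark: the disintegration of $\kappa$ producing the $\nu_{s,t}$ that you announce is never actually used -- integrating directly against $\kappa$, as you do in the definition of $\Phi_1,\Phi_2$, is enough -- so that part could be omitted.
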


First, suppose that $\mu$ is concentrated on the diagonal of $\AA^2$. Then
every measure $\Phi\in\ca(\AA^4)$ with $\Phi^{12}=\mu$ is concentrated on the
set $\{(x,x,u,v):~x,u,v\in J\}$, and hence $\Phi^{134}=\Phi^{234}$, so
$\QQ(\mu)=0$.

Second, for every $\mu\ge 0$ we have
\begin{align*}
\QQ(\mu^*) &= \sup\{\LL_1(\Phi):~\Phi^{12}=\mu^*,~ \Phi\ge 0\}
= \sup\{\LL_1(\Phi):~(\Phi^{**})^{12}=\mu,~ \Phi\ge 0\}\\
&= \sup\{\LL_1(\Phi^{**}):~\Phi^{12}=\mu,~ \Phi\ge 0\} = \QQ(\mu).
\end{align*}

Third, let $\kappa\in\ca_+(\AA^3)$ and $\delta>0$. By the definition of $\QQ$,
there is a measure $\Phi\in\ca_+(\AA^4)$ such that
\[
\QQ(\kappa^{12})\le\LL_1(\Phi)+\delta,\qquad\text{and}\qquad \Phi^{12}=\kappa^{12}.
\]
Consider the space $\ca(\AA^{\{12345\}})$, where the space of $\kappa$ is
identified with $\ca(\AA^{\{125\}})$ (the space of $\Phi$ remains
$\ca(\AA^{\{1234\}})$. The equation $\Phi^{12}=\kappa^{12}$ implies that there
is a measure $\Gamma\in\ca_+(\AA^{\{12345\}})$ such that $\Gamma^{1234}=\Phi$
and $\Gamma^{125}=\kappa$. Using \eqref{EQ:LL-ZZ}, we get
\begin{align*}
\QQ(\kappa^{12})&=\QQ(\Phi^{12})\le\LL_1(\Phi)+\delta
= \ZZ(\Phi^{134}-\Phi^{234})+\delta = \ZZ(\Gamma^{134}-\Gamma^{234})+\delta\\
&=\ZZ(\Gamma^{134}-\Gamma^{345})+\ZZ(\Gamma^{345}-\Gamma^{234})+\delta
\end{align*}
Applying \eqref{EQ:QZZPHI} with $\Gamma^{1345}$ in place of $\Phi$ and index
$5$ in place of $2$, we get that $\ZZ(\Gamma^{134}-\Gamma^{345})\le
\QQ(\Gamma^{15}) = \QQ(\kappa^{15})$. Similarly,
$\ZZ(\Gamma^{345}-\Gamma^{234})\le \QQ(\kappa^{25})$, and so
\[
\QQ(\kappa^{12}) \le \QQ(\kappa^{15})+\QQ(\kappa^{25})+\delta.
\]
Since this holds for every $\delta>0$, we get that $\QQ(\kappa^{12})\le
\QQ(\kappa^{15}) + \QQ(\kappa^{25})$, proving that $\QQ$ is metrical.

Now $\QQ(\psi)\le \RR(\psi)<1$ but $\QQ(\sigma)\ge 1$, so the hypothesis of the
theorem is violated. This proves \eqref{EQ:HXY}.

\smallskip

This implies the (seemingly) stronger statement that
\begin{equation}\label{EQ:HXY1}
\Hf_1 \cap \Hf_2^\delta\cap \Hf_3^\delta\not=\emptyset
\end{equation}
for all $\delta>0$. Indeed, if $\Phi \in \Hf_1^{\delta/2} \cap
\Hf_2^{\delta/2}\cap \Hf_3^{\delta/2}$, then there is a measure $\Phi'\in
\Hf_1$ such that $d_\tv(\Phi,\Phi')<\delta/2$, and then $\Phi'\in \Hf_1 \cap
\Hf_2^\delta\cap \Hf_3^\delta$.

Our next step is to prove that for every $\delta>0$,
\begin{equation}\label{EQ:HXY2}
\Hf_1 \cap \Hf_2 \cap \Hf_3^\delta\not=\emptyset.
\end{equation}
Indeed, let $\Phi \in \Hf_1 \cap \Hf_2^{\delta/3}\cap \Hf_3^{\delta/3}$. By
$d_\tv(\Phi,\Hf_2)<\delta/3$ it follows that $\|\Phi_-\|<\delta/3$. Consider
the measure $\Psi=\Phi_++\Phi_-^* \in\ca_+(\AA^4)$, then
\begin{align*}
\Psi^{134}-\Psi^{234} &=(\Phi_+)^{134} + (\Phi_-^*)^{134} - (\Phi_+)^{234} - (\Phi_-^*)^{234}\\
&=(\Phi_+)^{134} + (\Phi_-)^{234} - (\Phi_+)^{234} - (\Phi_-)^{134}=\Phi^{134} -\Phi^{234}=\overline{\sigma}.
\end{align*}

Thus $\Psi\in \Hf_1\cap \Hf_2$. Furthermore,
\begin{equation}\label{EQ:DTVPH3}
d_\tv(\Psi,\Hf_3) \le d_\tv(\Phi,\Hf_3) + \|\Phi-\Psi\|< \frac13\delta + 2\|\Phi_-\| < \delta,
\end{equation}
so $\Psi\in \Hf_3^\delta$. The multicommodity flow $\Psi$ satisfies
\eqref{EQ:FLO-M-LOAD2} and \eqref{EQ:FLO-M-FEAS}, and it is easy to check that
it violates capacity $\psi$ by at most $\|\Psi^{12}\setminus\psi\| \le
d_\tv(\Psi,\Hf_3) <\delta$.

This completes the proof of Theorem \ref{THM:MULTI-COMM-M}.

\medskip

\noindent{\bf Acknowledgement.} My thanks are due to Mikl\'os Ab\'ert,
Alexander Kechris, Tam\'as Keleti, D\'avid Kunszenti-Kov\'acs and Mikl\'os
Laczkovich for their valuable help with this research. I am also indebted to
the anonymous referee for suggesting many improvements to the paper.

\end{document}